\numberwithin{equation}{section}
\def\demo{\noindent{\it Proof. }}
\newtheorem{theorem}{Theorem}[section]
\newtheorem{lemma}[theorem]{Lemma}
\newtheorem{proposition}[theorem]{Proposition}
\newtheorem{corollary}[theorem]{Corollary}
\newtheorem{conjecture}[theorem]{Conjecture}
\theoremstyle{definition}
\newtheorem{definition}[theorem]{Definition}
\newtheorem{procedure}[theorem]{Procedure}
\newtheorem{remark}[theorem]{Remark}
\newtheorem{example}[theorem]{Example}
\begin{document}

%%%%%%%%%%%%%%%%%%%%%%%%%%%%%%%%%%%%%%%%%%%%%%%%%%%%%%%%%%%%%%%%%%%%%

\title[Algebraic invariants of Geramita ideals]
{Generalized minimum distance functions 
and algebraic invariants of Geramita ideals}

\author[S. M. Cooper]{Susan M. Cooper}
\address{Department of Mathematics,
420 Machray Hall, 186 Dysart Road
University of Manitoba, Winnipeg, MB R3T 2N2 Canada.}
\email{susan.cooper@umanitoba.ca}

\author[A. Seceleanu]{Alexandra Seceleanu}
\address{Department of Mathematics,
203 Avery Hall, Lincoln, NE 68588.
}
\email{aseceleanu@unl.edu}

\author[S. O. Toh\u{a}neanu]{\c{S}tefan O. Toh\u{a}neanu}
\address{Department of Mathematics,
University of Idaho,
875 Perimeter Drive, MS 1103,
Moscow, ID 83844-1103.}
\email{tohaneanu@uidaho.edu}

\author[M. Vaz Pinto]{Maria Vaz Pinto}
\address{Departamento de Matem\'atica, Instituto Superior T\'ecnico, Universidade de Lisboa, Avenida Rovisco Pais, 1, 1049-001 Lisboa, Portugal.}
\email{vazpinto@math.ist.utl.pt}

\author[R. H. Villarreal]{Rafael H. Villarreal}
\address{
Departamento de
Matem\'aticas\\
Centro de Investigaci\'on y de Estudios
Avanzados del
IPN\\
Apartado Postal
14--740 \\
07000 Mexico City, CDMX.
}
\email{vila@math.cinvestav.mx}

%\date{Last edited December 13, 2018}

\keywords{generalized minimum distance, fat points, linear codes, basic
parameters, degree, Hilbert function, unmixed ideal, complete
intersections, Reed--Muller-type codes.}
\subjclass[2010]{Primary 13P25; Secondary 14G50, 94B27, 11T71.}
\begin{abstract}
Motivated by notions from coding theory, we study the generalized minimum distance (GMD) function $\delta_I(d,r)$ of a graded ideal $I$ in a polynomial ring over an arbitrary field using commutative algebraic methods.
It is shown that $\delta_I$ is non-decreasing as a
function of $r$ and non-increasing as a function of $d$. For vanishing
ideals over finite fields, we  show that $\delta_I$ is
strictly decreasing as a function of $d$ until it stabilizes.
We also study algebraic invariants of Geramita ideals. Those ideals are graded, unmixed,
$1$-dimensional and their associated primes are generated by linear
forms.  
We also examine GMD functions of complete
intersections and show some special cases of two conjectures of
Toh\u{a}neanu--Van Tuyl and Eisenbud-Green-Harris.
\end{abstract}

\maketitle

\section{Introduction}\label{intro-section}

Let $K$ be any field, and let $C$ be a linear code that is the image of some $K$-linear map $K^s\longrightarrow K^n$. Suppose $G$ is the $s\times n$ matrix representing this map with respect to some chosen bases and assume that $G$ has no zero columns. By definition, the {\em minimum (Hamming) distance} of $C$ is $$\delta(C):=\min\{{\rm wt}({\bf v}) \mid {\bf v}\in C\setminus\{{\bf 0}\}\},$$ where for any vector ${\bf w}\in K^n$, the {\em weight} of ${\bf w}$, denoted ${\rm wt}({\bf w})$, is the number of nonzero entries in ${\bf w}$.  More generally, for $1\leq r\leq \dim_K(C)$, the {\em $r$-th generalized Hamming distance}, denoted $\delta_r(C)$, is defined as follows. For any subcode, i.e., linear subspace, $D\subseteq C$ define the support of $D$ to be $$\chi(D):=\{i \mid \text{there exists} \,\,\, (x_1,\ldots,x_n)\in D \mbox{ with }x_i\neq 0\}.$$ Then the $r$-th generalized Hamming distance of $C$ is
$$\delta_r(C):=\min_{D\subseteq C,\,\dim D=r}|\chi(D)|.$$
The {\it weight hierarchy\/} of $C$ is the sequence
$(\delta_1(C),\ldots,\delta_k(C))$, where $k=\dim(C)$. Observe that $\delta_1(C)$ equals the minimum distance $\delta(C)$. The study of these weights is related to trellis coding, $t$--resilient functions, and was motivated by some applications from cryptography \cite{wei}.  It is the study of the generalized Hamming weight of a linear code that motivates our definition of a generalized minimum distance function for any graded ideal in a polynomial ring \cite{helleseth,klove}.

If the rank of $G$ is $s$, then it turns out (see \cite{wei}) that 
\begin{equation}
\label{eq:deltaandhyp}
\delta_r(\mathcal C)=n-{\rm hyp}_r(\mathcal C),
\end{equation}
 where ${\rm hyp}_r(\mathcal C)$, is the maximum number of columns of $G$ that span an $(s-r)$-dimensional vector subspace of $K^s$.  Moreover, if $G$ also has no proportional columns then the columns of $G$ determine the coordinates of $n$ (projective) points in $\mathbb P^{s-1}$, not all contained in a hyperplane. Denote this set  $\mathbb X=\{P_1,\ldots,P_n\}$ and let $I:=I(\mathbb X)\subset S:=K[t_1,\ldots,t_s]$ be the defining ideal of $\mathbb X$. We have:
\begin{itemize}
  \item the (Krull) dimension of $S/I$ is $\dim(S/I)=1$, and the degree is $\deg(S/I)=n$;
  \item the ideal $I$ is given by $I=\mathfrak{p}_1\cap\cdots\cap \mathfrak{p}_n$, where $\mathfrak{p}_i$ is the vanishing ideal of the point $P_i$, so $I$ is unmixed, each associated prime ideal $\mathfrak{p}_i$ is generated by linear forms, and  $I=\sqrt{I}$;
  \item $\displaystyle{\rm hyp}_r(\mathcal C)=\max_{F\in \mathcal F_r}\{\deg(S/(I,F))\}$, where $\mathcal F_r$ is the set of $r$-tuples of linear forms of $S$ that are linearly independent. With this, we can conclude that
      $$\delta_r(\mathcal C)=\deg(S/I)-\max_{F\in \mathcal F_r}\{\deg(S/(I,F))\}.$$
\end{itemize}

A similar approach can be taken for projective Reed--Muller-type codes. Let $\mathbb{X}=\{P_1,\ldots,P_n\}$ be a finite subset of
$\mathbb{P}^{s-1}$. Let $I:=I(\mathbb{X})\subset S=K[t_1,\ldots,t_s]$, be the defining ideal of $\mathbb X$. Via a rescaling of the homogeneous coordinates of the points $P_i$, we can assume that the first non-zero coordinate of each $P_i$ is $1$. 
Fix a degree $d\geq 1$. Because of the assumption on the coordinates of the $P_i$, there is a well-defined $K$-linear map given by the evaluation of the homogeneous polynomials of degree $d$ at each point in $\mathbb X$. This map is given by
\begin{equation*}
{\rm ev}_d\colon S_d\rightarrow K^{n},\ \ \ \ \
f\mapsto
\left(f(P_1),\ldots,f(P_n)\right),
\end{equation*} where $S_d$ denotes the $K$-vector space of homogeneous polynomials of $S$ of degree $d$.  The image of $S_d$ under ${\rm ev}_d$, denoted by  $C_\mathbb{X}(d)$, is
called a {\it projective Reed-Muller-type code\/} of
degree $d$ on $\mathbb{X}$ \cite{duursma-renteria-tapia,Gold,GRT}. The {\it parameters} of the linear
code $C_\mathbb{X}(d)$ are:
\begin{itemize}
\item {\it length\/}: $|\mathbb{X}|=\deg(S/I)$;
\item {\it dimension\/}: $\dim_K C_\mathbb{X}(d)=H_{\mathbb X}(d)$, the Hilbert function of $S/I$ in degree $d$;
\item$r$-th {\it generalized Hamming weight\/}:
$\delta_\mathbb{X}(d,r):=\delta_r(C_\mathbb{X}(d))$.
\end{itemize}

By \cite[Theorem~4.5]{rth-footprint} the $r$-th generalized Hamming weight of a projective Reed--Muller code is given by
$$
\delta_\mathbb{X}(d,r)=\deg(S/I)-\max_{F\in \mathcal F_{d,r}}\{\deg(S/(I,F)\},
$$ where $\mathcal F_{d,r}$ the set of $r$-tuples of forms of degree $d$ in $S$ which are linearly independent over $K$ modulo the ideal $I$ and the maximum is taken to be 0 if $\mathcal F_{d,r}=\emptyset$.

\vskip .1in

As we can see above, the generalized Hamming weights for any linear code can be interpreted using the  language of commutative algebra. Motivated by the notion of generalized Hamming weight described above and following  \cite{rth-footprint} we define {\it generalized minimum distance (GMD) functions} for any homogeneous ideal in a polynomial ring. This allows us to extend  the notion of generalized Hamming weights to codes arising from algebraic schemes, rather than just from reduced sets of points. Another advantage to formulating the notion of generalized minimum distance in the language of commutative algebra is that it allows the use of various homological invariants of graded ideals to study the possible values for these GMD functions.

Let $S=K[t_1,\ldots,t_s]=\oplus_{d=0}^{\infty} S_d$ be a polynomial ring over
a field $K$ with the standard grading and let $I\neq(0)$ be a graded ideal
of $S$.
Given $d,r\in\mathbb{N}_+$, let $\mathcal{F}_{d,r}$ be the set:
$$
\mathcal{F}_{d,r}:=\{\, \{f_1,\ldots,f_r\}\subset S_d\ \mid \overline{f}_1,\ldots,\overline{f}_r\, \mbox{are linearly
independent over }K, (I\colon(f_1,\ldots,f_r))\neq I\},
$$
where $\overline{f}=f+I$ is the class of $f$ modulo $I$, and
$(I\colon(f_1,\ldots,f_r))=\{g\in S \mid gf_i \in I,\, \text{for all $i$}\}$. 
If necessary we denote $\mathcal{F}_{d,r}$ by $\mathcal{F}_{d,r}(I)$.
We denote the {\it degree\/} of $S/I$ by $\deg(S/I)$. 

\begin{definition}
\label{def:GMD}
 Let $I\neq(0)$ be a graded ideal of $S$.
The function
$\delta_I\colon \mathbb{N}_+\times\mathbb{N}_+\rightarrow \mathbb{Z}$ given by
$$
\delta_I(d,r):=\begin{cases} \deg(S/I)-\max\{\deg(S/(I,F)) \mid F\in\mathcal{F}_{d,r}\}&\mbox{if }\mathcal{F}_{d,r}\neq\emptyset,\\
\deg(S/I)&\mbox{if\ }\mathcal{F}_{d,r}=\emptyset,
\end{cases}
$$
is called the {\it generalized minimum distance function\/} of
$I$, or simply the GMD function of $I$. 
\end{definition}
This notion recovers
(Proposition~\ref{jul15-18}) and refines the algebraic-geometric notion of degree.
If $r=1$ one obtains
the minimum distance function of $I$ \cite{hilbert-min-dis}. In this
case we denote $\delta_I(d,1)$ simply by $\delta_I(d)$ and
$\mathcal{F}_{d,r}$ by $\mathcal{F}_{d}$.

The aims of this paper are to study the behavior of
$\delta_I$, to introduce algebraic methods to estimate
this function, and to study the algebraic invariants (minimum
distance function, v-number, regularity, socle degrees) of special ideals that we call Geramita ideals.  Recall that an ideal $I\subset S$ is called {\it unmixed\/} if all its associated primes have the same height; this notion is sometimes
called height unmixed in the literature. We call an ideal $I\subset S$ a \textit{Geramita ideal} if $I$ is an unmixed graded ideal of dimension $1$ whose associated primes are generated by linear forms.
The defining ideal of the scheme of a finite sets of projective fat points and the
unmixed monomial ideals of dimension $1$ are examples of Geramita ideals.

The following function is closely related to $\delta_I$ as illustrated in Eq. \eqref{eq:deltaandhyp}.
\begin{definition}
\label{def:hyp}
Let $I$ be a graded ideal of $S$.
 The function
${\rm hyp}_I\colon \mathbb{N}_+\times\mathbb{N}_+\rightarrow
\mathbb{N}$, given by
$$
{\rm hyp}_I(d,r):=\begin{cases} \max\{\deg(S/(I,F)) \mid
F\in\mathcal{F}_{d,r}\}&\mbox{if }\mathcal{F}_{d,r}\neq\emptyset,\\
0&\mbox{if\ }\mathcal{F}_{d,r}=\emptyset,
\end{cases}$$
is called the {\it hyp function} of $I$. 
\end{definition}
If $r=1$, we denote ${\rm
hyp}_I(d,1)$ by ${\rm hyp}_I(d)$. Finding upper bounds for
${\rm hyp}_I(d,r)$ is equivalent to finding lower bounds for
$\delta_I(d,r)$.
If $I(\mathbb{X})$ is the vanishing ideal
of a finite set $\mathbb{X}$ of reduced projective points, then
${\rm hyp}_{I(\mathbb{X})}(d,1)$ is the maximum number of points of $\mathbb{X}$
contained in a hypersurface of degree $d$ (see
\cite[Remarks~2.7 and 3.4]{tohaneanu-vantuyl}). There is a similar
geometric interpretation for ${\rm hyp}_{I(\mathbb{X})}(d,r)$
\cite[Lemma~3.4]{rth-footprint}.

To compute $\delta_I(d,r)$ is a difficult problem even when $K$ is a finite field and $r=1$.  However, we show that a generalized footprint function, which is more computationally tractable, gives lower bounds for $\delta_I(d,r)$.  Fix a monomial order $\prec$ on $S$. Let ${\rm in}_\prec(I)$ be the initial ideal of $I$ and let $\Delta_\prec(I)$  be the
{\it footprint\/} of $S/I$, consisting of all the {\it standard
monomials\/} of $S/I$ with respect
to $\prec$. The footprint of $S/I$ is also called the {\it
Gr\"obner \'escalier\/} of $I$. Given integers $d,r\geq 1$, let
$\mathcal{M}_{\prec, d, r}$ be the
set of all subsets $M$ of $\Delta_\prec(I)_d:=\Delta_\prec(I)\cap S_d$
with $r$ distinct elements such
that $({\rm in}_\prec(I)\colon(M))\neq {\rm
in}_\prec(I)$.

\begin{definition}
\label{def:fp}
The {\it generalized footprint
function\/} of $I$,
denoted ${\rm fp}_I$, is the function ${\rm fp}_I\colon
\mathbb{N}_+\times\mathbb{N}_+\rightarrow \mathbb{Z}$ given
by
$$
{\rm fp}_I(d,r):=\begin{cases} \deg(S/I)-\max\{\deg(S/({\rm
in}_\prec(I),M))\,\vert\,
M\in\mathcal{M}_{\prec, d,r}\}&\mbox{if }\mathcal{M}_{\prec, d,r}\neq\emptyset,\\
\deg(S/I)&\mbox{if }\mathcal{M}_{\prec, d,r}=\emptyset.
\end{cases}.
$$
\end{definition}

If $r=1$ one obtains the footprint function of $I$ that was
studied in \cite{footprint-ci} from a theoretical point of
view (see \cite{hilbert-min-dis,min-dis-ci} for some applications). In
this case we denote ${\rm fp}_I(d,1)$ simply by ${\rm fp}_I(d)$ and
$\mathcal{M}_{\prec, d,r}$ by $\mathcal{M}_{\prec, d}$. The importance of the footprint function is that it gives a lower bound on the generalized minimum degree function  (Theorem \ref{rth-footprint-lower-bound}) and it is computationally much easier to determine than the generalized minimum degree function. See the Appendix for scripts that implement these computations.

The content of this paper is as follows. In Section~\ref{prelim-section} we
present some of the results and terminology that will be needed
throughout the paper. In some of our results we will assume that there
exists a linear form $h$ that is regular on $S/I$, that is,
$(I\colon h)=I$. There are wide families
of ideals over finite fields
that satisfy this hypothesis, e.g., vanishing ideals of
parameterized codes \cite{algcodes}. Thus our results can be
applied to a variety of Reed--Muller type codes \cite{GRT}, to
monomial ideals, and to ideals that satisfy
$|K|>\deg(S/\sqrt{I})$.

In Section~\ref{mdf-section} we study GMD functions of unmixed graded ideals.
The {\it footprint matrix\/} $({\rm fp}_I(d,r))$
and the {\it weight matrix} $(\delta_I(d,r))$ of $I$ are the
matrices whose $(d,r)$-entries are ${\rm fp}_I(d,r)$ and
$\delta_I(d,r)$, respectively. We show that the entries of each
row of the weight matrix form a non-decreasing sequence and that the entries of each
column of the weight matrix form a non-increasing sequence
(Theorem~\ref{rth-footprint-lower-bound}). We also show that ${\rm fp}_I(d,r)$
is a lower bound for $\delta_I(d,r)$
(Theorem~\ref{rth-footprint-lower-bound}).
This was known when $I$ is the vanishing ideal of a finite set of
projective points
\cite[Theorem~4.9]{rth-footprint}.

Let $I\subset
S$ be an unmixed graded ideal whose
associated primes are generated by linear forms. In
Section~\ref{min-dis-section} we study the minimum distance functions
of these ideals.
For $\delta_I(d)=\delta_I(d,1)$, the \textit{regularity index} of
$\delta_I$, denoted ${\rm reg}(\delta_I)$, is the smallest $d\geq 1$
such that $\delta_I(d)=1$. If $I$ is prime, we set ${\rm
reg}(\delta_I)=1$. The regularity index of $\delta_I$ is the index where the value of this numerical
function stabilizes (Remark~\ref{remark-reg-index}),
named by analogy with the regularity index for
the Hilbert function of a fat point scheme $Z$ which is the
 index where the Hilbert function $H_Z$ of $Z$ stabilizes.

In order to study the behavior of $\delta_I$ we introduce a numerical invariant called the v-{\em number} (Definition \ref{def:v-number}).
We give a description for this invariant in Proposition~\ref{lem:vnumber} that will allow us
to compute it using computer algebra systems, e.g.~{\it Macaulay\/}$2$ \cite{mac2} (Example~\ref{sep5-18-example}).

\noindent {\bf Proposition~\ref{monday-morning}}{\it\ Let
$I\subsetneq\mathfrak{m}\subset S$ be an
unmixed graded ideal whose
associated primes are generated by linear forms. Then
${\rm reg}(\delta_I)={\rm v}(I)$.
}

From the viewpoint of algebraic
coding theory it is important to determine ${\rm reg}(\delta_I)$.
Indeed let $\mathbb{X}$ be a set of projective points over a finite
field $K$, let $C_\mathbb{X}(d)$ be its corresponding Reed-Muller
type code, and let $\delta_\mathbb{X}(d)$ be the minimum distance of
$C_\mathbb{X}(d)$ (see
Section~\ref{Reed-Muller-section}), then $\delta_\mathbb{X}(d)\geq 2$ if
and only if $1\leq d<{\rm reg}(\delta_{I(\mathbb{X})})$. Our results  give an effective method---that can be applied to any Reed-Muller type code---to compute the regularity index of the minimum
distance (Corollary~\ref{min-dis-v-number}, Example~\ref{Hiram-first-counterxample}).

The minimum socle degree ${\rm s}(I)$ of $S/I$
(Definition~\ref{regularity-socle-degree}) was used
in \cite{tohaneanu-vantuyl} to obtain homological lower bounds
for the minimum distance of a fat point scheme $Z$ in
$\mathbb{P}^{s-1}$.
We relate the minimum socle degree, the v-number and the Castelnuovo-Mumford regularity for Geramita ideals in Theorem~\ref{banff-july-22-29-2018}.  For radical ideals it is an open problem whether or
not ${\rm reg}(\delta_I)\leq {\rm reg}(S/I)$
\cite[Conjecture~4.2]{footprint-ci}. In dimension $1$, the
conjecture is true because of Proposition~\ref{monday-morning} and
Theorem~\ref{banff-july-22-29-2018}.  Moreover, via Theorem~\ref{banff-july-22-29-2018}, we can
extend the notion of a Cayley--Bacharach scheme \cite{geramita-cayley-bacharach} by
defining the notion of a Cayley--Bacharach ideal (Definition~\ref{cayley-bacharach-def}). It turns out
that Cayley-Bacharach ideals are connected to Reed--Muller type codes
and to minimum distance functions.

Letting $H_I$ be the Hilbert function of $I$, we have $\delta_I(d)>\deg(S/I)-H_I(d)+1$ for some $d \geq 1$ when $I$ is unmixed of dimension at least $2$ (Proposition~\ref{sep10-18}).  One of our main results is:

\noindent {\bf Theorem~\ref{banff-theorem-1}}{\it\
If  $I\subset S$ is a Geramita ideal and there exists $h\in S_1$ regular on $S/I$, then
$$
\delta_I(d)\leq \deg(S/I)-H_I(d)+1
$$
for $d \geq 1$ or equivalently $H_I(d)-1\leq {\rm hyp}_I(d)$ for $d\geq 1$.
}

This inequality is well known when $I$ is the vanishing ideal of a
finite set of projective points \cite[p.~82]{algcodes}. In this case
the inequality is called the {\it Singleton bound\/}
\cite[Corollary~1.1.65]{tsfasman}. 

Projective Reed--Muller-type codes are  studied in
Section~\ref{Reed-Muller-section}. 

The main result of Section~\ref{Reed-Muller-section} shows that the entries of each column of the weight matrix
$(\delta_{\mathbb{X}}(d,r))$ form a decreasing sequence until they
stabilize.

 In particular one recovers the case when $\mathbb{X}$ is a set,
lying on a projective torus, parameterized by a finite set of monomials
\cite[Theorem~12]{camps-sarabia-sarmiento-vila}. Then we show
that $\delta_\mathbb{X}(d,H_\mathbb{X}(d))$ is equal to $|\mathbb{X}|$
for $d\geq 1$ (Corollary~\ref{jul15-18-coro}).

In Section~\ref{ci-section} we examine minimum distance functions of complete
intersection ideals and show some special cases of the following two
conjectures.

\noindent {\bf Conjecture~\ref{tohaneanu-eisenbud}}{\it\ Let $\mathbb{X}$ be a
finite in $\mathbb{P}^{s-1}$ and suppose that $I=I(\mathbb{X})$ is a
complete intersection generated by $f_1,\ldots,f_{c}$, $c=s-1$, with
$d_i=\deg(f_i)$, and $2\leq d_i\leq d_{i+1}$ for
all $i$.
\begin{itemize}
\item[(a)] $(${\rm Toh\u{a}neanu--Van Tuyl
\cite[Conjecture~4.9]{tohaneanu-vantuyl}}$)$
$\delta_I(1)\geq (d_1-1)d_2\cdots d_{c}$.
\item[(b)] $(${\rm Eisenbud-Green-Harris
\cite[Conjecture~CB10]{Eisenbud-Green-Harris}}$)$
If $f_1,\ldots,f_c$ are quadratic forms, then ${\rm hyp}_I(d)\leq
2^c-2^{c-d}$ for $1\leq d\leq c$ or equivalently $\delta_I(d)\geq
2^{c-d}$ for $1\leq d\leq c$.
\end{itemize}}

We prove part (a) of this conjecture, in a more general setting, when
$I$ is equigenerated, that is, all minimal homogeneous generators have the
same degree (Proposition~\ref{stefan-adam-uniform}, Remark~\ref{stefan-adam-uniform-gen}). The
conjecture also holds for $\mathbb{P}^2$
\cite[Theorem~4.10]{tohaneanu-vantuyl} (Corollary~\ref{stefan-adam-P2}).
According to \cite{Eisenbud-Green-Harris}, part (b) of this conjecture is true for the
following values of $d$: $1, c-1, c$.

For all unexplained
terminology and additional information  we refer to
\cite{CLO,Eisen,Mats} (for the theory of Gr\"obner bases,
commutative algebra, and Hilbert functions), and
\cite{MacWilliams-Sloane,tsfasman} (for the theory of
error-correcting codes and linear codes).

\section{Preliminaries}\label{prelim-section}

In this section we
present some of the results that will be needed throughout the paper
and introduce some more notation. All results of this
section are well-known. To avoid repetitions, we continue to employ
the notations and
definitions used in Section~\ref{intro-section}.

\paragraph{\bf Commutative algebra} Let $I\neq(0)$ be a graded ideal
of $S$ of Krull dimension $k$. The {\it Hilbert function} of $S/I$ is:
$H_I(d):=\dim_K(S_d/I_d)$ for $d=0,1,2,\ldots$,
where $I_d=I\cap S_d$. By a theorem of Hilbert \cite[p.~58]{Sta1},
there is a unique polynomial
$P_I(x)\in\mathbb{Q}[x]$ of
degree $k-1$ such that $H_I(d)=P_I(d)$ for  $d\gg 0$. By convention
the degree of the zero polynomial is $-1$.

The {\it degree\/} or {\it multiplicity\/} of $S/I$ is the
positive integer
$$
\deg(S/I):=\begin{cases} (k-1)!\, \lim_{d\rightarrow\infty}{H_I(d)}/{d^{k-1}}
&\mbox{if }k\geq 1,\\
\dim_K(S/I) &\mbox{if\ }k=0.
\end{cases}
$$
As usual ${\rm ht}(I)$
will denote the height of the ideal $I$. By the {\rm dimension\/}
of $I$ (resp. $S/I$) we mean the Krull dimension of $S/I$ denoted by $\dim(S/I)$.

One of the most useful and well-known facts about the degree is its additivity:

\begin{proposition}{\rm(Additivity of the degree
\cite[Proposition~2.5]{prim-dec-critical})}\label{additivity-of-the-degree}
If $I$ is an ideal of $S$ and
$I=\mathfrak{q}_1\cap\cdots\cap\mathfrak{q}_m$
is an irredundant primary
decomposition, then
$$
\deg(S/I)=\sum_{{\rm ht}(\mathfrak{q}_i)={\rm
ht}(I)}\hspace{-3mm}\deg(S/\mathfrak{q}_i).$$
\end{proposition}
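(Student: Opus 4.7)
The plan is to prove this in two stages. First, I would reduce to the equidimensional case by separating out the primary components whose height equals $\text{ht}(I)$. Second, I would induct on the number of top-dimensional components using a short exact sequence argument based on additivity of Hilbert polynomials.

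For the reduction, write $J=\bigcap_{\text{ht}(\mathfrak{q}_i)=\text{ht}(I)}\mathfrak{q}_i$ and $J'=\bigcap_{\text{ht}(\mathfrak{q}_j)>\text{ht}(I)}\mathfrak{q}_j$, so that $I=J\cap J'$ and $\dim(S/J')<\dim(S/I)=\dim(S/J)$. Applying the short exact sequence
$$0\longrightarrow S/I\longrightarrow S/J\oplus S/J'\longrightarrow S/(J+J')\longrightarrow 0$$
and taking Hilbert polynomials shows $P_I(x)=P_J(x)+P_{J'}(x)-P_{J+J'}(x)$. Since $J'\supseteq J\cap J'=I$ and $J+J'\supseteq J'$ both have strictly smaller Krull dimension than $S/J$, the polynomials $P_{J'}$ and $P_{J+J'}$ have degree strictly less than $\dim(S/I)-1$. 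Comparing leading coefficients in degree $\dim(S/I)-1$ gives $\deg(S/I)=\deg(S/J)$, so it suffices to prove the claim when all primary components have height $\text{ht}(I)$.

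For the inductive step, suppose $I=\mathfrak{q}_1\cap\cdots\cap\mathfrak{q}_t$ is irredundant with $\text{ht}(\mathfrak{q}_i)=\text{ht}(I)$ for all $i$, and let $\mathfrak{p}_i=\sqrt{\mathfrak{q}_i}$. Setting $I_1=\mathfrak{q}_1$ and $I_2=\mathfrak{q}_2\cap\cdots\cap\mathfrak{q}_t$, the short exact sequence
$$0\longrightarrow S/I\longrightarrow S/I_1\oplus S/I_2\longrightarrow S/(I_1+I_2)\longrightarrow 0$$
again yields $P_I=P_{I_1}+P_{I_2}-P_{I_1+I_2}$. The key claim is that $\dim(S/(I_1+I_2))<\dim(S/I)$, equivalently $\text{ht}(I_1+I_2)>\text{ht}(I)$. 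Granted this, extracting the leading coefficient gives $\deg(S/I)=\deg(S/\mathfrak{q}_1)+\deg(S/I_2)$, and induction on $t$ finishes the proof.

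The main obstacle is verifying the dimension drop $\text{ht}(I_1+I_2)>\text{ht}(I)$ in the inductive step. Any minimal prime $\mathfrak{P}$ of $I_1+I_2$ must contain $\mathfrak{p}_1$ and some $\mathfrak{p}_j$ for $j\geq 2$; if $\text{ht}(\mathfrak{P})=\text{ht}(\mathfrak{p}_1)$, then $\mathfrak{p}_1=\mathfrak{P}\supseteq \mathfrak{p}_j$, and since $\mathfrak{p}_1$ and $\mathfrak{p}_j$ are distinct primes of equal height in the polynomial ring $S$ this is impossible. Hence every minimal prime of $I_1+I_2$ has strictly greater height than $\text{ht}(I)$, as required. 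This non-containment between distinct equidimensional primes in a polynomial ring is the crux of the argument, and it is precisely where irredundancy of the primary decomposition is used.
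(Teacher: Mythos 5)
The paper itself does not prove this proposition; it cites it as \cite[Proposition~2.5]{prim-dec-critical} and offers no argument, so there is no in-paper proof to compare against. On its own merits your proof is correct and follows the standard Mayer--Vietoris-plus-induction strategy: you use the short exact sequence
$$0\longrightarrow S/(A\cap B)\longrightarrow S/A\oplus S/B\longrightarrow S/(A+B)\longrightarrow 0$$
twice, once to peel off the components of height $>{\rm ht}(I)$ and once per top-dimensional component, and each time the error term $S/(A+B)$ has strictly smaller Krull dimension so it does not affect the coefficient of $x^{\dim(S/I)-1}$ in the Hilbert polynomial. The crux, which you identify correctly, is the dimension drop $\dim S/(\mathfrak{q}_1+I_2)<\dim S/I$ in the equidimensional inductive step, and your argument via minimal primes of $\mathfrak{q}_1+I_2$ containing both $\mathfrak{p}_1$ and some $\mathfrak{p}_j$, which cannot coincide or nest because they are distinct primes of the same height, is exactly right (irredundancy of the decomposition being what guarantees the $\mathfrak{p}_i$ are pairwise distinct).

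Two minor points worth tightening. First, in the reduction step the phrase ``Since $J'\supseteq I$ and $J+J'\supseteq J'$ both have strictly smaller Krull dimension'' reads as though the containments alone give strictness, but they only give $\leq$; the strict inequality $\dim(S/J')<\dim(S/I)$ comes from the fact that every primary component of $J'$ has height exceeding ${\rm ht}(I)$, hence ${\rm ht}(J')>{\rm ht}(I)$, and then $J+J'\supseteq J'$ passes the strict bound along. Second, you should note (or treat separately) the degenerate case $\dim(S/I)=0$: there the Hilbert polynomial is identically zero and the leading-coefficient comparison is vacuous, but in that case $I$ is $\mathfrak{m}$-primary so $m=1$ and the statement is trivial, so nothing breaks. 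Neither point is a gap in substance; the proof is sound.
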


If $F\subset S$, the {\it ideal quotient\/} of $I$ with
respect to $(F)$ is given by $(I\colon(F))=\{h\in S\vert\, hF\subset I\}$.
An element $f$ of $S$ is called a {\it zero-divisor\/} of $S/I$---as an
$S$-module---if there is
$\overline{0}\neq \overline{a}\in S/I$ such that
$f\overline{a}=\overline{0}$, and $f$ is called {\it regular} on
$S/I$ if $f$ is not a zero-divisor. Thus $f$ is a zero-divisor if
and only if $(I\colon f)\neq I$. An associated prime of $I$ is a prime
ideal $\mathfrak{p}$ of $S$ of the form $\mathfrak{p}=(I\colon f)$
for some $f$ in $S$.

\begin{theorem}{\cite[Lemma~2.1.19,
Corollary~2.1.30]{monalg-rev}}\label{zero-divisors} If $I$ is an
ideal of $S$ and
$I=\mathfrak{q}_1\cap\cdots\cap\mathfrak{q}_m$ is
an irredundant primary decomposition with ${\rm
rad}(\mathfrak{q}_i)=\mathfrak{p}_i$, then the set of zero-divisors
$\mathcal{Z}(S/I)$  of $S/I$ is equal to
$\bigcup_{i=1}^m\mathfrak{p}_i$,
and $\mathfrak{p}_1,\ldots,\mathfrak{p}_m$ are the associated primes of
$I$.
\end{theorem}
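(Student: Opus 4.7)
The plan is to establish the two assertions of the theorem separately: first the equality $\mathcal{Z}(S/I)=\bigcup_{i=1}^m\mathfrak{p}_i$, and then the identification of the $\mathfrak{p}_i$ as the associated primes of $I$. Both pieces follow from playing off the primary property of the $\mathfrak{q}_i$ against the irredundancy of the decomposition.

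For the equality $\mathcal{Z}(S/I)=\bigcup_{i=1}^m\mathfrak{p}_i$, I would handle each inclusion in turn. For $\bigcup_i\mathfrak{p}_i\subset\mathcal{Z}(S/I)$: irredundancy supplies, for each $i$, an element $a_i\in\bigcap_{j\neq i}\mathfrak{q}_j$ with $a_i\notin\mathfrak{q}_i$ (hence $a_i\notin I$). Given $f\in\mathfrak{p}_i=\mathrm{rad}(\mathfrak{q}_i)$, pick the minimal $k\geq 1$ with $f^k\in\mathfrak{q}_i$; then $f^{k-1}a_i\notin I$ (it lies in all $\mathfrak{q}_j$ for $j\neq i$ but not in $\mathfrak{q}_i$) while $f\cdot(f^{k-1}a_i)\in I$, showing $f\in\mathcal{Z}(S/I)$. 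Conversely, if $fg\in I$ with $g\notin I$, then $g\notin\mathfrak{q}_i$ for some $i$ but $fg\in\mathfrak{q}_i$; primaryness of $\mathfrak{q}_i$ forces $f\in\mathrm{rad}(\mathfrak{q}_i)=\mathfrak{p}_i$.

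For the second assertion, I would show each $\mathfrak{p}_i$ is realized as $(I:b)$ for some $b\in S$. Starting from $a_i$ as above, note that $\mathfrak{p}_i\cdot a_i\subset\bigcap_j\mathfrak{q}_j=I$ need not hold on the nose, but since $S$ is Noetherian there is a minimal $\ell\geq 1$ with $\mathfrak{p}_i^{\ell}a_i\subset\mathfrak{q}_i$; pick $h\in\mathfrak{p}_i^{\ell-1}a_i\setminus\mathfrak{q}_i$ (for $\ell=1$, take $h=a_i$). Then $\mathfrak{p}_i\cdot h\subset I$ while $h\notin I$, giving $\mathfrak{p}_i\subset(I:h)$; the reverse inclusion $(I:h)\subset\mathfrak{p}_i$ follows because for any $r\in(I:h)$ we have $rh\in\mathfrak{q}_i$ with $h\notin\mathfrak{q}_i$, so $r\in\mathrm{rad}(\mathfrak{q}_i)=\mathfrak{p}_i$. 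Conversely, any prime $\mathfrak{p}$ of the form $(I:f)$ consists of zero-divisors on $S/I$, so by the first part $\mathfrak{p}\subset\bigcup_i\mathfrak{p}_i$; prime avoidance then gives $\mathfrak{p}\subset\mathfrak{p}_i$ for some $i$, and a symmetric argument (or the standard minimal primes are associated fact applied after localization) yields equality.

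The main obstacle is the bookkeeping in producing a single element $h$ whose annihilator is exactly $\mathfrak{p}_i$ rather than merely sandwiched between $\mathfrak{p}_i$ and some larger prime. The minimality argument above (picking the smallest power $\ell$ of $\mathfrak{p}_i$ that lands $a_i$ inside $\mathfrak{q}_i$) is the standard workaround; alternatively, one may pass to the localization $S_{\mathfrak{p}_i}$, observe that $\mathfrak{p}_iS_{\mathfrak{p}_i}$ is associated to $IS_{\mathfrak{p}_i}$ because $\mathfrak{q}_iS_{\mathfrak{p}_i}$ is its only primary component, and then descend via the standard correspondence between associated primes of $I$ and of $IS_{\mathfrak{p}_i}$.
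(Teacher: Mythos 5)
The paper does not prove this theorem; it is quoted as a standard fact with a citation to Villarreal's \emph{Monomial Algebras}, so there is no in-paper argument to compare against. Your proposal follows the classical textbook route (as in Atiyah--Macdonald or Matsumura), and the overall structure is correct, but two steps do not hold up as written.

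In the inclusion $\bigcup_i\mathfrak{p}_i\subset\mathcal{Z}(S/I)$ you choose $k$ minimal with $f^k\in\mathfrak{q}_i$ and then assert $f^{k-1}a_i\notin\mathfrak{q}_i$. That does not follow: you only know $f^{k-1}\notin\mathfrak{q}_i$ and $a_i\notin\mathfrak{q}_i$, and a primary ideal is not prime, so the product can still land in $\mathfrak{q}_i$ (for instance $x,y\notin(x^2,xy,y^2)$ but $xy\in(x^2,xy,y^2)$). The repair is to take instead the minimal $k'\geq 1$ with $f^{k'}a_i\in\mathfrak{q}_i$ (it exists because $f^k a_i\in\mathfrak{q}_i$, and $k'\geq 1$ because $a_i\notin\mathfrak{q}_i$); then $b:=f^{k'-1}a_i\notin\mathfrak{q}_i\supset I$ while $fb\in\bigcap_j\mathfrak{q}_j=I$. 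Alternatively, the element $h$ you build in the second part with $(I:h)=\mathfrak{p}_i$ already exhibits every element of $\mathfrak{p}_i$ as a zero-divisor, so you could dispense with this first computation entirely.

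In the converse direction, the phrase ``a symmetric argument yields equality'' papers over the actual content. Prime avoidance gives only $\mathfrak{p}\subset\mathfrak{p}_i$, and the reverse inclusion does not follow by symmetry. The standard way to finish is to write $(I:f)=\bigcap_{f\notin\mathfrak{q}_i}(\mathfrak{q}_i:f)$, observe that $(\mathfrak{q}_i:f)$ is $\mathfrak{p}_i$-primary whenever $f\notin\mathfrak{q}_i$, and take radicals to get $\mathfrak{p}=\sqrt{(I:f)}=\bigcap_{f\notin\mathfrak{q}_i}\mathfrak{p}_i$, whence $\mathfrak{p}=\mathfrak{p}_i$ for some $i$ because a prime equal to a finite intersection must equal one of the factors. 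The localization route you mention also works, but as written this half of the claim is not actually proven.
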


\begin{definition}
If $I$ is a graded ideal of $S$, the {\it Hilbert series\/} of $S/I$,
denoted $F_I(x)$, is given by
$$F_I(x)=\sum_{d=0}^\infty H_I(d)x^d, \mbox{ where }x\mbox{ is a
variable}. $$
\end{definition}
\begin{theorem}{\rm(Hilbert--Serre \cite[p.~58]{Sta1})}\label{hilbert-serre}
Let $I\subset S$ be a graded ideal of dimension $k$.
Then there is a unique polynomial
$h(x)\in\mathbb{Z}[x]$ such that
$$
F_I(x)=\frac{h(x)}{(1-x)^k}\ \mbox{ and }\ h(1)>0.
$$
\end{theorem}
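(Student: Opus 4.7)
The plan is to induct on the number of variables $s$, first showing that $F_I(x)$ is a rational function of the form $g(x)/(1-x)^s$, and then identifying the reduced denominator as $(1-x)^k$ with $k = \dim(S/I)$ by comparison with the theorem of Hilbert recalled just before the statement.

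For the base case $s=0$, the only graded ideals of $S = K$ are $(0)$ and $K$, and the claim is immediate. For the inductive step, I would consider multiplication by $t_s$ on $S/I$, which fits into the exact sequence of finitely generated graded $S$-modules
$$0 \longrightarrow \bigl((I \colon t_s)/I\bigr)(-1) \longrightarrow (S/I)(-1) \xrightarrow{\,\cdot t_s\,} S/I \longrightarrow S/(I, t_s) \longrightarrow 0.$$
Both outer terms are annihilated by $t_s$, so they are finitely generated graded modules over $K[t_1, \ldots, t_{s-1}]$; by the inductive hypothesis each has Hilbert series of the form $(\text{polynomial})/(1-x)^{s-1}$. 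Additivity of Hilbert series over short exact sequences then yields
$$(1-x)\, F_I(x) \;=\; F_{S/(I,t_s)}(x) \;-\; x\, F_{(I \colon t_s)/I}(x),$$
so $F_I(x) = g(x)/(1-x)^s$ for some $g(x) \in \mathbb{Z}[x]$.

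Next, I would cancel common factors of $1-x$ from numerator and denominator to write $F_I(x) = h(x)/(1-x)^{k'}$ with $h(x) \in \mathbb{Z}[x]$ and $h(1) \neq 0$; integrality is preserved here because $1 - x$ is primitive in $\mathbb{Z}[x]$. Expanding $1/(1-x)^{k'} = \sum_d \binom{d+k'-1}{k'-1} x^d$ and convolving with $h(x)$ shows that $H_I(d)$ agrees for $d \gg 0$ with a polynomial in $d$ of degree $k'-1$ and leading coefficient $h(1)/(k'-1)!$. By the theorem of Hilbert recalled just before the statement, this polynomial equals the Hilbert polynomial $P_I(x)$, which has degree $k-1$, forcing $k' = k$. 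The definition of $\deg(S/I)$ from Section~\ref{intro-section} identifies the leading coefficient as $\deg(S/I)/(k-1)!$, giving $h(1) = \deg(S/I) > 0$. Uniqueness of $h$ then follows from the formal identity $h(x) = (1-x)^k F_I(x)$.

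The hardest part is the bookkeeping in the inductive step: one must verify that $(I \colon t_s)/I$ and $S/(I, t_s)$ are finitely generated graded modules over $K[t_1, \ldots, t_{s-1}]$ so that the inductive hypothesis applies, that the shifted sequence above really is exact in the graded category, and that after cancelling $(1-x)^{s-k}$ from $g(x)$ we remain inside $\mathbb{Z}[x]$. Once these formal points are settled, the positivity $h(1) > 0$ and the uniqueness of $h$ are immediate consequences of the power series computation and the definition of degree.
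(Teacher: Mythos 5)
The paper states this as a cited classical result (Hilbert--Serre, attributed to \cite{Sta1}) and gives no proof, so there is no paper proof to compare against; you have supplied the standard textbook argument (as in Atiyah--Macdonald, Bruns--Herzog, or Stanley's own paper), and it is essentially correct.

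One point should be tightened, and you flag it yourself without quite resolving it: the inductive hypothesis, as literally stated for rings of the form $S/I$, does not apply to the outer terms of your exact sequence. While $S/(I,t_s)$ is a cyclic $K[t_1,\ldots,t_{s-1}]$-algebra and so fits the $S'/J$ template, the kernel $(I\colon t_s)/I$ is merely a finitely generated graded $K[t_1,\ldots,t_{s-1}]$-module and need not be cyclic --- e.g.\ for $I=t_3\,(t_1,t_2,t_3)$ in $K[t_1,t_2,t_3]$ one has $(I\colon t_3)/I = (t_1,t_2,t_3)/t_3(t_1,t_2,t_3)$, which has a three-dimensional degree-one piece and cannot be generated by one element. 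The standard repair is to state and prove the rationality claim $F_M(x)=g(x)/(1-x)^s$ for \emph{all} finitely generated graded $S$-modules $M$, not just for $S/I$; the induction then goes through verbatim, and the statement about graded ideals is the special case $M=S/I$. Two smaller cosmetic remarks: the reason cancellation of a factor $1-x$ stays in $\mathbb{Z}[x]$ is that $1-x$ is monic up to the unit $-1$ (so division has integer coefficients), rather than that it is primitive; and the leading-coefficient computation identifying $h(1)$ with $\deg(S/I)$ should be accompanied by a word on the case $k=0$, where $F_I$ is itself the polynomial $h$ and $h(1)=\sum_d H_I(d)=\dim_K(S/I)>0$ directly. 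With the module-level induction in place, the rest of your argument --- cancelling $(1-x)$-factors, matching $k'$ with $k$ via the Hilbert polynomial, and reading off $h(1)=\deg(S/I)>0$ --- is correct.
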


\begin{remark}\label{mult-vs-h-vector-1}\rm The leading coefficient
of the Hilbert polynomial $P_I(x)$ is equal to
$h(1)/(k-1)!$. Thus $h(1)$ is equal to $\deg(S/I)$.
\end{remark}

\begin{definition} Let $I\subset S$ be a graded ideal.
The $a$-invariant of $S/I$, denoted $a(S/I)$, is the degree of
$F_I(x)$ as a rational
function, that is, $a(S/I)=\deg(h(x))-k$. If
$h(x)=\sum_{i=0}^rh_ix^i$, $h_i\in\mathbb{Z}$, $h_r\neq 0$, the
vector $(h_0,\ldots,h_r)$ is called the $h$-vector of $S/I$.
\end{definition}

\begin{definition}\label{regularity-socle-degree}\rm Let $I\subset S$ be a graded ideal and let
${\mathbf F}$ be the minimal graded free resolution of $S/I$ as an
$S$-module:
\[
{\mathbf F}:\ \ \ 0\rightarrow
\bigoplus_{j}S(-j)^{b_{g,j}}
\stackrel{}{\rightarrow} \cdots
\rightarrow\bigoplus_{j}
S(-j)^{b_{1,j}}\stackrel{}{\rightarrow} S
\rightarrow S/I \rightarrow 0.
\]
The {\it Castelnuovo--Mumford regularity\/} of $S/I$ ({\it
regularity} of $S/I$ for short) and the {\it minimum socle degree}
(s-{\it number} for short) of $S/I$ are defined as
$${\rm reg}(S/I)=\max\{j-i \mid b_{i,j}\neq 0\}\ \mbox{ and }\ {\rm s}(I)={\rm min}\{j-g \mid b_{g,j}\neq 0\}.$$
If $S/I$ is Cohen-Macaulay (i.e. $g=\dim(S)-\dim(S/I)$) and there is a unique $j$ such that $b_{g,j}\neq 0$, then the ring $S/I$ is called {\em level}. In particular, a level ring for which the unique $j$ such that $b_{g,j}\neq 0$ is $b_{g,j}=1$ is called {\em Gorenstein}.
\end{definition}

An excellent reference for the regularity of graded ideals is the book of Eisenbud
\cite{eisenbud-syzygies}.

\begin{definition}\label{definition:index-of-regularity}\rm
The \emph{regularity index} of the Hilbert function of $S/I$, or simply
the \emph{regularity index} of $S/I$, denoted
${\rm ri}(S/I)$, is the least integer $n\geq 0$ such that
$H_I(d)=P_I(d)$ for $d\geq n$.
\end{definition}

The next result is valid over any field; see for instance
\cite[Theorem~5.6.4]{monalg-rev}.

\begin{theorem}\cite{geramita-cayley-bacharach}\label{hilbert-function-dim=1}
Let $I$ be a graded ideal with ${\rm depth}(S/I)>0$. The following
hold.
\begin{itemize}
\item[\rm(i)] If $\dim(S/I)\geq 2$, then $H_I(i)<H_I(i+1)$ for
$i\geq 0$.
\item[\rm (ii)] If $\dim(S/I)=1$, then there is an integer $r$ and a
constant $c$ such that
$$
1=H_I(0)<H_I(1)<\cdots<H_I(r-1)<H_I(i)=c \hspace{.5cm} {\it for}
\ i\geq r.
$$
\end{itemize}
\end{theorem}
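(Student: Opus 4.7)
The plan is to reduce the problem to a one-step shift via a regular linear form, and then read off both parts from the resulting identity on Hilbert functions.

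First I would secure a linear form $h \in S_1$ regular on $S/I$. The Hilbert function, Krull dimension, and depth of $S/I$ are unchanged by base change to an infinite field extension, so I may assume $K$ is infinite. Since ${\rm depth}(S/I) > 0$, the graded maximal ideal $\mathfrak{m} = (t_1, \ldots, t_s)$ is not an associated prime of $I$, so by graded prime avoidance (each associated prime meets $S_1$ in a proper $K$-subspace) there exists $h \in S_1$ avoiding every associated prime of $I$, hence regular on $S/I$ by Theorem~\ref{zero-divisors}. The resulting short exact sequence of graded modules
\begin{equation*}
0 \longrightarrow (S/I)(-1) \xrightarrow{\,\cdot h\,} S/I \longrightarrow S/(I,h) \longrightarrow 0
\end{equation*}
produces the key identity
\begin{equation*}
H_I(d) - H_I(d-1) \,=\, H_{(I,h)}(d) \qquad \text{for all } d \geq 1,
\end{equation*}
and also $\dim(S/(I,h)) = \dim(S/I) - 1$. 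Thus the strict monotonicity of $H_I$ on a range becomes equivalent to strict positivity of $H_{(I,h)}$ on that range.

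Next I invoke the elementary observation that if $J \subset S$ is a proper graded ideal with $H_J(d_0) = 0$ for some $d_0$, then $J_{d_0} = S_{d_0}$, and since $S_d = S_1 \cdot S_{d-1}$ for $d \geq 1$ we get $J_d = S_d$ for all $d \geq d_0$ by induction; so $H_J(d)$ vanishes for large $d$ precisely when $S/J$ is Artinian, i.e.\ $\dim(S/J) = 0$. Applied to $J = (I,h)$: in case~(i), $\dim(S/(I,h)) \geq 1$ forces $H_{(I,h)}(d) \geq 1$ for every $d \geq 0$, so $1 = H_I(0) < H_I(1) < H_I(2) < \cdots$ as required. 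In case~(ii), $S/(I,h)$ is Artinian, so letting $r$ be the largest integer with $H_{(I,h)}(r) > 0$ we have $H_{(I,h)}(d) > 0$ for $0 \leq d \leq r$ and $H_{(I,h)}(d) = 0$ for $d > r$; summing the increments then yields the strict chain $1 = H_I(0) < H_I(1) < \cdots < H_I(r)$ and the stabilization $H_I(d) = H_I(r) =: c$ for all $d \geq r$, which is the stated conclusion.

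The only delicate point is securing the regular linear form when $K$ is finite, which is handled by the base-change step at the outset; beyond that, the argument is straightforward bookkeeping of the Hilbert function increments through the graded short exact sequence.
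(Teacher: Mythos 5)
The paper does not actually supply a proof of this theorem: it is cited from \cite{geramita-cayley-bacharach}, with the remark that it holds over any field and a pointer to \cite[Theorem~5.6.4]{monalg-rev}. So there is no in-paper argument to compare against; I can only assess your proof on its own terms.

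Your proof is correct, and it is the standard argument. The reduction to an infinite field is sound since the Hilbert function, dimension, and depth are all preserved under the faithfully flat extension $S \to S\otimes_K \overline{K}$, and the statement only concerns the Hilbert function. Prime avoidance then produces $h\in S_1$ regular on $S/I$ (using that $\mathfrak{m}$ is not an associated prime, by the depth hypothesis), and the short exact sequence $0\to (S/I)(-1)\xrightarrow{h} S/I \to S/(I,h)\to 0$ gives $H_I(d)-H_I(d-1)=H_{(I,h)}(d)$ with $\dim S/(I,h)=\dim S/I - 1$. The remaining bookkeeping — that a proper graded quotient whose Hilbert function vanishes in one degree vanishes thereafter, so $H_{(I,h)}>0$ everywhere precisely when $\dim S/(I,h)\geq 1$, and otherwise $H_{(I,h)}$ is eventually $0$ — is exactly what is needed to split into cases (i) and (ii), and you carry it out cleanly, including the correct identification of the integer $r$ as the last index where $H_{(I,h)}$ is positive. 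No gaps.
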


\begin{lemma}\label{tohaneanu-aug29-18}
Let $I\subset J\subset S$ be graded ideals of the same
height. The following hold.
\begin{itemize}
\item[(a)] \cite[Lemma~8]{Engheta} If $I$ and $J$ are unmixed, then $I=J$ if and only if $\deg(S/I)=\deg(S/J)$.
\item[(b)] If $I\subsetneq J$, then $\deg(S/I)>\deg(S/J)$.
\end{itemize}
\end{lemma}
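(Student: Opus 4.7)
The plan splits into two pieces. Part (a) is imported verbatim from \cite[Lemma~8]{Engheta}, so nothing new is required there. Part (b) I would obtain by combining (a) with a general degree-monotonicity inequality for inclusions of equidimensional ideals; so the real task reduces to (i) proving the weak inequality $\deg(S/I)\geq\deg(S/J)$ for any graded inclusion $I\subseteq J$ of the same height, and (ii) using (a) to rule out equality.

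For step (i), the key tool is the additivity of degree (Proposition~\ref{additivity-of-the-degree}), which expresses both $\deg(S/I)$ and $\deg(S/J)$ as sums of $\deg(S/\mathfrak{q})$ over primary components sitting at minimal primes of height $h={\rm ht}(I)={\rm ht}(J)$. The central observation is that every minimal prime $P$ of $J$ of height $h$ satisfies $P\supseteq J\supseteq I$; since ${\rm ht}(P)=h={\rm ht}(I)$, such a $P$ is automatically a minimal prime of $I$ as well. Thus the height-$h$ minimal primes of $J$ form a subset of those of $I$. For each common minimal prime $P$, localizing yields an inclusion $I_P\subseteq J_P$ of $PS_P$-primary ideals in $S_P$, whence $\ell_{S_P}(S_P/I_P)\geq \ell_{S_P}(S_P/J_P)$, and after passing back to the corresponding primary components $\deg(S/\mathfrak{q}_P(I))\geq \deg(S/\mathfrak{q}_P(J))$. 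Summing these pointwise bounds, and noting that any minimal prime of $I$ of height $h$ which fails to be minimal over $J$ contributes positively only to the left side, produces $\deg(S/I)\geq\deg(S/J)$.

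For step (ii), under the unmixedness hypothesis carried over from (a), the equivalence in (a) says that $\deg(S/I)=\deg(S/J)$ forces $I=J$; since $I\subsetneq J$, equality of degrees is impossible, and combined with (i) this upgrades to $\deg(S/I)>\deg(S/J)$.

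The delicate point—really the only one—is that the additivity formula sees only height-$h$ components, and the localization argument only controls lengths at these primes. Without the unmixedness hypothesis inherited from (a), embedded components can produce coincidental degree equality despite strict ideal inclusion (for example $I=(x^2,xy)\subsetneq(x)=J$ in $K[x,y]$ both have height $1$ and degree $1$), so the unmixed assumption is genuinely what lets us pass from $\geq$ to $>$ in step (ii).
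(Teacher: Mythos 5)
Your argument for part (b) is correct under the hypotheses you impose, but it takes a genuinely different route from the paper's. The paper simply takes Hilbert polynomials across the short exact sequence
\[
0 \longrightarrow J/I \longrightarrow S/I \longrightarrow S/J \longrightarrow 0,
\]
observes that when $I$ is unmixed every associated prime of $J/I$, being an associated prime of $S/I$, has height ${\rm ht}(I)$, so that $\dim(J/I)=\dim(S/I)=\dim(S/J)$, and then concludes $\deg(S/I)=\deg(J/I)+\deg(S/J)$ with $\deg(J/I)>0$. Your proof instead disaggregates the degree over height-$h$ minimal primes via the associativity formula, localizes at each such prime to compare lengths, and then appeals to part (a) to forbid equality of degrees. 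Both are valid, but the paper's is shorter and more self-contained, and --- this is the one substantive point --- it only needs \emph{$I$} to be unmixed, whereas your appeal to part (a) in step (ii) needs \emph{$J$} to be unmixed as well. In the subsequent applications of this lemma (for instance in proving that $\deg(S/(I,F))<\deg(S/I)$ when $I$ is unmixed and $(F)\not\subset I$), the larger ideal $J=(I,F)$ is not assumed unmixed, so your step (ii) as written does not cover the case the authors actually use. You can repair it without invoking (a) by pushing your own localization argument through to its conclusion: if $\deg(S/I)=\deg(S/J)$ then every height-$h$ minimal prime of $I$ must also be minimal over $J$ and the lengths must agree, so $I_P=J_P$ for all such $P$; unmixedness of $I$ then gives $J\subseteq\bigcap_P(IS_P\cap S)=I$, a contradiction. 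Finally, your observation that the unmixedness hypothesis is genuinely necessary is correct and worth keeping in mind: the statement of (b) is false without it, as your example $(x^2,xy)\subsetneq(x)$ shows, and indeed the paper's own proof silently invokes this hypothesis in the assertion $\dim(J/I)=\dim(S/I)$.
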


\begin{proof} 
%(a): Assume that $\deg(S/I)=\deg(S/J)$.
%We proceed by contradiction assuming that $I$ is not equal to $J$.
%In the exact sequence
%$$
%0\rightarrow J/I\rightarrow S/I \rightarrow S/J\rightarrow 0
%$$
%all $S$-modules are unmixed of the same dimension. Indeed any
%associated prime of $J/I$ is an associated prime of $S/I$, thus
%$\dim(J/I)=\dim(S/I)$ and $J/I$ is unmixed. First assume that
%$\dim(S/I)>0$. Taking Hilbert functions or Hilbert series in the
%above exact sequence, we get $\deg(S/I)=\deg(J/I)+\deg(S/J)$,
%thus $\deg(J/I)=0$, a contradiction because $(J/I)\neq 0$. If $\dim(S/I)=0$, then
%$\dim_K(S/I)=\dim_K(J/I)+\dim_K(S/J)$, thus $\dim_K(J/I)=0$, and
%consequently $I=J$, a contradiction.

(b) Since any associated prime of $J/I$ is an associated prime of $S/I$,
$\dim(J/I)=\dim(S/I)$. From the short exact sequence
$$
0\rightarrow J/I\rightarrow S/I \rightarrow S/J\rightarrow 0
$$
we obtain $\deg(S/I)=\deg(J/I)+\deg(S/J)$. As $J/I$ is not zero,
one has $\deg(S/I)>\deg(S/J)$.
\end{proof}

\begin{lemma}{\cite[p.~122]{monalg-rev}}\label{uniform-regular-sequence} Let $I\subset S$ a
graded ideal of height $r$. If $K$ is infinite and $I$ is minimally
generated by forms of degree $p\geq 1$, then there are
forms $f_1,\ldots,f_m$ of degree $p$ in $I$
such that $f_1,\ldots,f_r$ is a regular sequence and
$I$ is minimally generated by $f_1,\ldots,f_m$.
\end{lemma}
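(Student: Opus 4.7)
The plan is to build the regular sequence inside $I_p$ by graded prime avoidance, then extend it to a $K$-basis of $I_p$. First, since $I$ is equigenerated in degree $p$, we have $I_d=0$ for $d<p$ and $I=S\cdot I_p$. Consequently $(\mathfrak{m}I)_p=0$, and graded Nakayama tells us that a set of degree-$p$ forms in $I$ is a minimal generating set of $I$ if and only if it is a $K$-basis of $I_p$. In particular, the number $m$ of minimal generators equals $\dim_K I_p$. This reframes the problem: we want to produce $r$ forms $f_1,\ldots,f_r\in I_p$ that form a regular sequence, and then complete them to a $K$-basis of $I_p$.

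Next, I would inductively construct $f_1,\ldots,f_r\in I_p$ with $\mathrm{ht}(f_1,\ldots,f_i)=i$. Suppose $f_1,\ldots,f_{i-1}$ are already chosen with $\mathrm{ht}(f_1,\ldots,f_{i-1})=i-1$, and let $\mathfrak{p}_1,\ldots,\mathfrak{p}_\ell$ be the minimal primes of $(f_1,\ldots,f_{i-1})$. These primes are homogeneous and, by Krull's height theorem, each has height at most $i-1$; in particular each has height strictly less than $\mathrm{ht}(I)=r$, so $I\not\subset \mathfrak{p}_j$, and since $I$ is generated in degree $p$, we even have $I_p\not\subset \mathfrak{p}_j$. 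Thus each $I_p\cap \mathfrak{p}_j$ is a proper $K$-subspace of $I_p$. Because $K$ is infinite, the $K$-vector space $I_p$ cannot be covered by finitely many proper subspaces, so we may pick
\[
f_i\in I_p\setminus\bigcup_{j=1}^\ell \mathfrak{p}_j.
\]
Any minimal prime of $(f_1,\ldots,f_i)$ properly contains some $\mathfrak{p}_j$ of height $i-1$ (since $f_i\notin\mathfrak{p}_j$) and has height at most $i$ by Krull, so its height is exactly $i$. Hence $\mathrm{ht}(f_1,\ldots,f_i)=i$. Since $S$ is Cohen--Macaulay, the resulting homogeneous sequence $f_1,\ldots,f_r$ is a regular sequence.

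Finally I would verify $K$-linear independence and extend. If $\sum_{i=1}^r c_if_i=0$ with $c_k\in K$ the last nonzero scalar, then $f_k\in(f_1,\ldots,f_{k-1})$, so $f_k$ lies in every minimal prime of $(f_1,\ldots,f_{k-1})$, contradicting its construction. Hence $f_1,\ldots,f_r$ are linearly independent in $I_p$, and any completion to a $K$-basis $\{f_1,\ldots,f_m\}$ of $I_p$ yields the required minimal generating set of $I$ by the reduction in the first paragraph.

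The main obstacle is the prime-avoidance step in the inductive construction: one needs the union $\bigcup_j(I_p\cap\mathfrak{p}_j)$ to miss some element of $I_p$, which is exactly where the hypothesis that $K$ be infinite is used (a finite $K$-vector space may indeed be covered by finitely many proper subspaces, and the conclusion can fail over finite fields). Everything else is standard bookkeeping with Krull's height theorem, the Cohen--Macaulay property of $S$, and graded Nakayama.
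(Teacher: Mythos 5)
Your proof is correct. The paper cites this result from Villarreal's \textit{Monomial Algebras} (p.~122) without reproducing the argument, but your route — graded Nakayama to reduce the problem to the degree-$p$ piece $I_p$, inductive prime avoidance in $I_p$ over the infinite field $K$ to drive the height up by one at each step, and Macaulay unmixedness in the Cohen--Macaulay ring $S$ to convert ``$r$ forms generating an ideal of height $r$'' into ``regular sequence,'' followed by completion to a $K$-basis of $I_p$ — is exactly the standard argument the citation points to.
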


\paragraph{\bf The footprint of an ideal} Let  $\prec$ be a monomial
order on $S$ and let $(0)\neq I\subset S$ be an ideal. If $f$ is a non-zero
polynomial in $S$, the {\it leading
monomial\/} of $f$
is denoted by ${\rm in}_\prec(f)$. The {\it initial ideal\/} of $I$, denoted by
${\rm in}_\prec(I)$, is the monomial ideal given by
${\rm in}_\prec(I)=(\{{\rm in}_\prec(f) \mid f \in I\})$.

 We will use the following
multi-index notation: for $a=(a_1,\ldots,a_s)\in\mathbb{N}^s$, set
$t^a:=t_1^{a_1}\cdots t_s^{a_s}$.
A monomial $t^a$ is called a
{\it standard monomial\/} of $S/I$, with respect
to $\prec$, if $t^a$ is not in the ideal ${\rm in}_\prec(I)$.
A polynomial $f$ is called {\it standard\/} if
$f\neq 0$ and $f$ is a
$K$-linear combination of standard monomials.
The set of standard monomials, denoted $\Delta_\prec(I)$, is called the {\it
footprint\/} of $S/I$. The image of the standard polynomials of
degree $d$, under the canonical map $S\mapsto S/I$,
$x\mapsto \overline{x}$, is equal to $S_d/I_d$, and the
image of $\Delta_\prec(I)$ is a basis of $S/I$ as a $K$-vector space.
This is a classical result of Macaulay (for a modern approach
see \cite[Chapter~5]{CLO}). In
particular, if $I$ is graded, then $H_I(d)$ is the number of standard
monomials of degree $d$.

\begin{lemma}{\cite[p.~3]{carvalho}}\label{nov6-15} Let $I\subset S$ be an ideal generated by
$\mathcal{G}=\{g_1,\ldots,g_r\}$, then
$$
\Delta_\prec(I)\subset\Delta_\prec({\rm in}_\prec(g_1),\ldots,{\rm
in}_\prec(g_r)).
$$
\end{lemma}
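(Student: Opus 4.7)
The plan is to prove the equivalent contrapositive: every monomial that belongs to the monomial ideal $J := (\mathrm{in}_\prec(g_1),\ldots,\mathrm{in}_\prec(g_r))$ also belongs to $\mathrm{in}_\prec(I)$. Taking complements inside the set of all monomials of $S$ then yields the stated inclusion $\Delta_\prec(I)\subset\Delta_\prec(J)$.

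First I would record the key observation that $\mathrm{in}_\prec(g_i)\in \mathrm{in}_\prec(I)$ for every $i$, which is immediate from the definition of the initial ideal together with $g_i\in I$. Combined with the fact that $\mathrm{in}_\prec(I)$ is an ideal of $S$, this shows every $S$-multiple of any $\mathrm{in}_\prec(g_i)$ lies in $\mathrm{in}_\prec(I)$.

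Next I would exploit that $J$ is a monomial ideal generated by monomials. A standard fact (see for instance \cite[Chapter~2]{CLO}) is that a monomial $t^a$ lies in a monomial ideal if and only if it is divisible by one of the generating monomials. Hence $t^a\in J$ forces $t^a=t^b\cdot \mathrm{in}_\prec(g_i)$ for some multi-index $b$ and some $i$, and by the previous paragraph this implies $t^a\in \mathrm{in}_\prec(I)$.

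Therefore any monomial $t^a\in J$ lies in $\mathrm{in}_\prec(I)$. Equivalently, a monomial not in $\mathrm{in}_\prec(I)$ is not in $J$, i.e.\ $\Delta_\prec(I)\subset\Delta_\prec(J)$, as desired. There is no real obstacle in this argument; the only point to handle cleanly is that divisibility characterization of membership in a monomial ideal, which is precisely why one passes through $J$ being monomial rather than trying to work directly with arbitrary elements.
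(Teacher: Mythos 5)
Your proof is correct and is the standard argument; the paper simply cites this lemma from the reference and does not reproduce a proof, so there is nothing to compare against. One small simplification: once you observe that each $\mathrm{in}_\prec(g_i)$ lies in $\mathrm{in}_\prec(I)$ and $\mathrm{in}_\prec(I)$ is an ideal, you already have the ideal containment $J\subset \mathrm{in}_\prec(I)$, so the detour through the divisibility characterization of monomial ideal membership, while correct, is not strictly needed.
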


\begin{lemma}{\cite[Lemma~4.7]{rth-footprint}}\label{regular-elt-in}
Let $\prec$ be a monomial order, let $I\subset S$ be an ideal, let
$F=\{f_1,\ldots,f_r\}$ be a set of polynomial of $S$ of positive
degree, and let ${\rm in}_\prec(F)=\{{\rm in}_\prec(f_1),\ldots,{\rm
in}_\prec(f_r)\}$ be the set of initial terms of $F$.
If $({\rm in}_\prec(I)\colon({\rm in}_\prec(F)))={\rm in}_\prec(I)$, then
$(I\colon(F))=I$.
\end{lemma}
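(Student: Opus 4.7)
The plan is to prove the contrapositive: I will show that if $(I\colon(F))\neq I$, then $({\rm in}_\prec(I)\colon({\rm in}_\prec(F)))\neq{\rm in}_\prec(I)$. Since $I\subset(I\colon(F))$ and ${\rm in}_\prec(I)\subset({\rm in}_\prec(I)\colon({\rm in}_\prec(F)))$ hold automatically, the hypothesis produces some $g\in S\setminus I$ with $gf_i\in I$ for all $i$, and I need to manufacture a monomial witnessing that the initial ideal quotient is strictly larger than ${\rm in}_\prec(I)$.

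The key move is to replace $g$ by its normal form. Fix a Gr\"obner basis of $I$ with respect to $\prec$ and let $g'$ be the remainder of $g$ on division by this basis. Then $g-g'\in I$, so on the one hand $g'f_i=gf_i-(g-g')f_i\in I$ for every $i$, and on the other hand $g'\neq 0$ since otherwise $g\in I$. By construction every monomial appearing in $g'$ is a standard monomial of $S/I$; in particular the leading monomial $m:={\rm in}_\prec(g')$ does not lie in ${\rm in}_\prec(I)$.

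Now I use the multiplicative property of leading monomials. For each $i$, since $g'\neq 0$ and $f_i\neq 0$, one has ${\rm in}_\prec(g'f_i)={\rm in}_\prec(g')\cdot{\rm in}_\prec(f_i)=m\cdot{\rm in}_\prec(f_i)$, and from $g'f_i\in I$ we conclude $m\cdot{\rm in}_\prec(f_i)\in{\rm in}_\prec(I)$. This holds for every $i$, so $m\in({\rm in}_\prec(I)\colon({\rm in}_\prec(F)))$ while $m\notin{\rm in}_\prec(I)$, completing the contrapositive.

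There is no real obstacle here; the argument is essentially a Gr\"obner-basis bookkeeping exercise. The only subtlety to be careful about is the reduction step: one must pass to the normal form $g'$ in order to guarantee that ${\rm in}_\prec(g')$ itself (and not merely some monomial of $g$) is a standard monomial, since the naive claim that $g\notin I$ implies ${\rm in}_\prec(g)\notin{\rm in}_\prec(I)$ is false in general.
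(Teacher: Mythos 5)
Your contrapositive argument via normal forms is correct. The key steps all hold: passing to the normal form $g'$ of a witness $g\in(I\colon(F))\setminus I$ guarantees $g'\neq 0$ with $g'f_i\in I$ and, crucially, every monomial of $g'$ (in particular its leading monomial $m$) outside ${\rm in}_\prec(I)$; the multiplicativity ${\rm in}_\prec(g'f_i)=m\cdot{\rm in}_\prec(f_i)$ is legitimate since $S$ is a domain and $g',f_i\neq 0$; and $g'f_i\in I$ then forces $m\cdot{\rm in}_\prec(f_i)\in{\rm in}_\prec(I)$ for all $i$, giving the required witness. You are also right to flag that one cannot shortcut by using $g$ itself, since ${\rm in}_\prec(g)$ may well lie in ${\rm in}_\prec(I)$ even when $g\notin I$.

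Note that the paper does not actually prove this lemma; it cites it from the reference \cite{rth-footprint}, so there is no in-paper proof to compare against. Your argument is the standard one for this kind of statement and is almost certainly what the cited source does.
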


Let $\prec$ be a monomial order and let
$\mathcal{F}_{\prec,d,r}$ be the set of all subsets
$F=\{f_1,\ldots,f_r\}$ of $S_d$ such that $(I\colon(F))\neq I$,
$f_i$ is a standard polynomial for all $i$,
$\overline{f}_1,\ldots,\overline{f}_r$ are linearly independent over
the field $K$, and ${\rm in}_\prec(f_1),\ldots,{\rm in}_\prec(f_r)$
are distinct monomials.

The next result is useful for computations
with {\it Macaulay\/}$2$ \cite{mac2} (see
Procedure~\ref{procedure-gmdf}).

\begin{proposition}{\cite[Proposition~4.8]{rth-footprint}}\label{march27-17}
The generalized minimum distance function of $I$
is given by the following formula
$$
\delta_I(d,r)=\left\{\begin{array}{ll}\deg(S/I)-\max\{\deg(S/(I,F)) \mid F\in\mathcal{F}_{\prec, d,r}\}&\mbox{if }\mathcal{F}_{\prec,d,r}\neq\emptyset,\\
\deg(S/I)&\mbox{if\ }\mathcal{F}_{\prec,d,r}=\emptyset.
\end{array}\right.
$$
\end{proposition}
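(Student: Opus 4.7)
The plan is to show that the two descriptions of $\delta_I(d,r)$ agree by proving that, although the set $\mathcal{F}_{\prec,d,r}$ is smaller than $\mathcal{F}_{d,r}$, the maxima of $\deg(S/(I,F))$ over these two sets coincide. First I would verify the trivial containment $\mathcal{F}_{\prec,d,r}\subseteq \mathcal{F}_{d,r}$ directly from the definitions; this immediately gives the inequality $\delta_I(d,r)\le $ RHS and also handles the case $\mathcal{F}_{d,r}=\emptyset$, since it forces $\mathcal{F}_{\prec,d,r}=\emptyset$.

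For the nontrivial direction I would start from an arbitrary $F=\{f_1,\ldots,f_r\}\in\mathcal{F}_{d,r}$ and construct some $F'\in\mathcal{F}_{\prec,d,r}$ with $\deg(S/(I,F'))=\deg(S/(I,F))$. The first reduction is to replace each $f_i$ by its normal form $g_i$ with respect to a Gr\"obner basis of $I$. Because $I$ is graded, $g_i$ is a standard polynomial of degree $d$, and since $g_i=f_i+h_i$ with $h_i\in I$ one has $\overline{g}_i=\overline{f}_i$ in $S/I$. Consequently the classes $\overline{g}_1,\ldots,\overline{g}_r$ are linearly independent over $K$, the ideals $(I,g_1,\ldots,g_r)$ and $(I,f_1,\ldots,f_r)$ coincide, and for any $a\in S$ the relation $ag_i=af_i+ah_i$ shows $ag_i\in I\iff af_i\in I$, so $(I\colon (g_1,\ldots,g_r))=(I\colon (f_1,\ldots,f_r))\ne I$.

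The second reduction is a Gaussian elimination on the standard polynomials $g_1,\ldots,g_r$ to achieve distinct leading monomials. Since the $g_i$ are standard and their classes modulo $I$ are linearly independent, and since the standard monomials form a $K$-basis of $S/I$, the $g_i$ themselves are $K$-linearly independent in $S_d$. Whenever two of them share a leading monomial I would cancel it by a suitable $K$-linear combination; the result is again a standard polynomial of degree $d$ (standard polynomials form a $K$-subspace) with strictly smaller leading monomial. After finitely many such steps one obtains $g'_1,\ldots,g'_r$ standard of degree $d$ with pairwise distinct leading monomials, still $K$-linearly independent in $S_d$, and still linearly independent modulo $I$. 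Because the whole process is invertible over $K$, the $K$-span of the $g'_i$ inside $S_d$ equals that of the $g_i$; hence the ideals $(I,g'_1,\ldots,g'_r)$ and $(I,g_1,\ldots,g_r)$ coincide, and the ideal quotient $(I\colon (g'_1,\ldots,g'_r))$ equals $(I\colon (g_1,\ldots,g_r))\ne I$. Therefore $F'=\{g'_1,\ldots,g'_r\}\in\mathcal{F}_{\prec,d,r}$ and $\deg(S/(I,F'))=\deg(S/(I,F))$, which furnishes the reverse inequality.

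The step I expect to require the most care is verifying that the ideal quotient condition $(I\colon (F))\ne I$ is preserved through both reductions: passing to normal forms and performing Gaussian elimination on a set rather than a sequence of generators. The key observation that makes this work is that $(I\colon (F))$ depends on $F$ only through the $S$-submodule $(F)+I$ of $S$, or equivalently, only through the $K$-span of $F$ modulo $I$; once this is recorded, both reductions are seen to preserve the quotient, and the proposition follows.
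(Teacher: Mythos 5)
Your proof is correct. The paper itself does not supply an argument for this proposition; it is quoted verbatim from the reference \cite[Proposition~4.8]{rth-footprint}, so there is no in-text proof to compare against. Your two-step reduction is the natural one: first pass to normal forms (legitimate because $I$ is graded, so the normal form of a homogeneous degree-$d$ element is again homogeneous of degree $d$, and is nonzero since $\overline{f}_i\neq 0$), then row-reduce the coefficient matrix with respect to the standard-monomial basis of $S_d$ to obtain representatives with pairwise distinct leading terms. Your closing observation—that both $(I\colon (F))$ and $(I,F)$ depend on $F$ only through the $K$-span of $F$ modulo $I$, i.e.\ through $(F)+I$—is exactly the invariance needed to see that both reductions preserve membership in $\mathcal{F}_{d,r}$ and the quantity $\deg(S/(I,F))$; together with the trivial inclusion $\mathcal{F}_{\prec,d,r}\subseteq\mathcal{F}_{d,r}$ (which also gives $\mathcal{F}_{d,r}=\emptyset\Rightarrow\mathcal{F}_{\prec,d,r}=\emptyset$, while your construction gives the converse), this yields equality of the two maxima. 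One could streamline the Gaussian-elimination step slightly by noting that the $g_i$ are linearly independent in $S_d$ (as you observe), writing their coordinate vectors relative to the standard monomials of $S_d$ ordered decreasingly by $\prec$, and passing to row-echelon form in one stroke; but the iterative description you give terminates for the reason you indicate and is equally valid.
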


An ideal $I\subset S$ is called
{\it radical\/} if $I$ is equal to its radical. The radical of $I$ is
denoted by $\sqrt{I}$. 

\begin{lemma}{\cite[Lemma~3.3]{rth-footprint}}\label{jul11-15} Let
$I\subset S$ be a radical unmixed graded ideal.
If $F=\{f_1,\ldots,f_r\}$ is a set of homogeneous polynomials of
$S\setminus\{0\}$, $(I\colon (F))\neq I$, and $\mathcal{A}$ is
the set of all associated primes
of $S/I$ that contain $F$, then ${\rm ht}(I)={\rm ht}(I,F)$,
$\mathcal{A}\neq\emptyset$, and
$$
\deg(S/(I,F))=\sum_{\mathfrak{p}\in\mathcal{A}}\deg(S/\mathfrak{p}).
$$
\end{lemma}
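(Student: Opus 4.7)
The plan is to exploit the radicality of $I$ to write $I=\mathfrak{p}_1\cap\cdots\cap\mathfrak{p}_m$, where the $\mathfrak{p}_i$ are the associated primes (all of height ${\rm ht}(I)$ by unmixedness), and then reduce everything to a primary-decomposition calculation for $(I,F)$ together with the additivity of degree (Proposition~\ref{additivity-of-the-degree}).

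First I would verify that $\mathcal{A}\neq\emptyset$. Using the formula $(I\colon(F))=\bigcap_{i=1}^m(\mathfrak{p}_i\colon(F))$ for intersections and the elementary fact that for a prime $\mathfrak{p}$
$$
(\mathfrak{p}\colon(F))=\begin{cases}\mathfrak{p}&\text{if }F\not\subset\mathfrak{p},\\ S&\text{if }F\subset\mathfrak{p},\end{cases}
$$
one obtains $(I\colon(F))=\bigcap_{F\not\subset\mathfrak{p}_i}\mathfrak{p}_i$. Since this intersection is assumed to be strictly larger than $I=\bigcap_{i=1}^m\mathfrak{p}_i$, at least one prime must be omitted, so at least one $\mathfrak{p}_i$ contains $F$, i.e.\ $\mathcal{A}\neq\emptyset$.

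Next I would identify the minimal primes of $(I,F)$. Any prime containing $(I,F)$ contains $I$ and hence contains some minimal prime $\mathfrak{p}_i$ of $I$, so has height at least ${\rm ht}(I)$. On the other hand, every $\mathfrak{p}\in\mathcal{A}$ itself contains $(I,F)$ and has height exactly ${\rm ht}(I)$; thus ${\rm ht}(I,F)={\rm ht}(I)$, and the minimal primes of $(I,F)$ of height equal to ${\rm ht}(I,F)$ are precisely the elements of $\mathcal{A}$ (a $\mathfrak{p}_i\notin\mathcal{A}$ satisfies $\mathfrak{p}_i+(F)\supsetneq\mathfrak{p}_i$ and so any minimal prime over it has height strictly larger than ${\rm ht}(I)$).

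The central step is to show that the $\mathfrak{p}$-primary component of $(I,F)$, for $\mathfrak{p}\in\mathcal{A}$, is $\mathfrak{p}$ itself. This I would do by localizing at $\mathfrak{p}$. Because $I$ is radical and $\mathfrak{p}$ is a minimal prime of $I$, the other $\mathfrak{p}_j$ ($j\neq i$) become the unit ideal after localization, so $I_\mathfrak{p}=\mathfrak{p} S_\mathfrak{p}$. Since $F\subset\mathfrak{p}$, we get $(I,F)_\mathfrak{p}=\mathfrak{p} S_\mathfrak{p}+(F)S_\mathfrak{p}=\mathfrak{p} S_\mathfrak{p}$, whose contraction is $\mathfrak{p}$. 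Hence the $\mathfrak{p}$-primary component of $(I,F)$ equals $\mathfrak{p}$. Finally, applying Proposition~\ref{additivity-of-the-degree} to any irredundant primary decomposition of $(I,F)$ yields
$$
\deg(S/(I,F))=\sum_{\mathfrak{p}\in\mathcal{A}}\deg(S/\mathfrak{p}).
$$

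I expect the main technical point to be the localization step identifying the top-dimensional primary components with the primes themselves; once that is in hand, the rest is bookkeeping with the additivity formula. The graded hypothesis plays no essential role beyond ensuring the invariants are defined as in Section~\ref{prelim-section}.
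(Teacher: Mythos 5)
The paper cites this lemma from \cite{rth-footprint} and does not reproduce a proof, so there is no in-paper argument to compare against; I therefore assess your proposal on its own. Your proof is correct and well organized: writing $I=\mathfrak{p}_1\cap\cdots\cap\mathfrak{p}_m$ with the $\mathfrak{p}_i$ the minimal (equal-height) primes of the radical unmixed ideal, computing $(I\colon(F))=\bigcap_{F\not\subset\mathfrak{p}_i}\mathfrak{p}_i$ to get $\mathcal{A}\neq\emptyset$, identifying the height-${\rm ht}(I)$ minimal primes of $(I,F)$ with the elements of $\mathcal{A}$, showing by localization that the $\mathfrak{p}$-primary component of $(I,F)$ is $\mathfrak{p}$ itself for each $\mathfrak{p}\in\mathcal{A}$, and then applying Proposition~\ref{additivity-of-the-degree} are all sound steps. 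The one point worth stating explicitly in the $\mathcal{A}\neq\emptyset$ argument is that the decomposition $I=\bigcap\mathfrak{p}_i$ is automatically irredundant because the $\mathfrak{p}_i$ are the pairwise incomparable minimal primes of a radical ideal, so omitting any one of them strictly enlarges the intersection. Your approach is in the same spirit as the paper's proof of the neighboring Lemma~\ref{degree-initial-footprint}(c), which itself invokes the present lemma.
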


\section{Generalized minimum distance function of a graded
ideal}\label{mdf-section}

In this section we study the generalized minimum distance function of a graded
ideal. 

Part (c) of the next lemma was known for vanishing ideals
and part (b) for unmixed radical ideals \cite[Proposition~3.5,
Lemma~4.1]{rth-footprint}.

\begin{lemma}\label{degree-initial-footprint} Let $I\subset S$ be an
unmixed graded ideal, let $\prec$ be
a monomial order, and let $F$ be a finite set of homogeneous polynomials of
$S$ such that $(I\colon(F))\neq I$. The following hold.
\begin{itemize}
\item[(a)] ${\rm ht}(I)={\rm ht}(I,F)$
\item[(b)] $\deg(S/(I,F))<\deg(S/I)$ if $I$ is an unmixed ideal and
$(F)\not\subset I$.
\item[(c)] $\deg(S/I)=\deg(S/(I\colon(F)))+\deg(S/(I,F))$ if $I$ is an
unmixed radical ideal.
\item [(d)] \cite[Lemma~4.1]{rth-footprint} $\deg(S/(I,F))\leq\deg(S/({\rm
in}_\prec(I),{\rm in}_\prec(F)))\leq\deg(S/I)$.
\end{itemize}
\end{lemma}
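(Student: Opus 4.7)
The plan is to deduce (a)--(c) from the primary decomposition of $I$, combined with Theorem~\ref{zero-divisors}, Proposition~\ref{additivity-of-the-degree}, and the two previously stated results Lemma~\ref{tohaneanu-aug29-18}(b) and Lemma~\ref{jul11-15}; part (d) is already credited to \cite[Lemma~4.1]{rth-footprint} and requires no new argument.

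For (a) I would fix an irredundant primary decomposition $I=\mathfrak{q}_1\cap\cdots\cap\mathfrak{q}_m$ with $\mathfrak{p}_i={\rm rad}(\mathfrak{q}_i)$, all of height ${\rm ht}(I)$ by unmixedness. The hypothesis $(I\colon(F))\neq I$ produces some $g\in S\setminus I$ with $gF\subseteq I$; pick an index $j$ with $g\notin\mathfrak{q}_j$. For every $f\in F$, $gf\in\mathfrak{q}_j$, and since $\mathfrak{q}_j$ is $\mathfrak{p}_j$-primary this forces $f\in\mathfrak{p}_j$. Thus $F\subseteq\mathfrak{p}_j$ and $(I,F)\subseteq\mathfrak{p}_j$, so ${\rm ht}(I,F)\leq{\rm ht}(\mathfrak{p}_j)={\rm ht}(I)$; the reverse inequality is automatic since $I\subseteq(I,F)$. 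Part (b) is then immediate: $(F)\not\subseteq I$ gives $I\subsetneq(I,F)$, part (a) gives equality of heights, and Lemma~\ref{tohaneanu-aug29-18}(b) yields $\deg(S/I)>\deg(S/(I,F))$.

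For part (c) I specialize to $I$ radical and unmixed, so $I=\mathfrak{p}_1\cap\cdots\cap\mathfrak{p}_m$ with all $\mathfrak{p}_i$ of the same height, and I partition the indices into $\mathcal{A}=\{i:F\subseteq\mathfrak{p}_i\}$ and $\mathcal{B}=\{i:F\not\subseteq\mathfrak{p}_i\}$. A direct prime-ideal computation shows $(\mathfrak{p}_i\colon(F))=\mathfrak{p}_i$ for $i\in\mathcal{B}$ and $(\mathfrak{p}_i\colon(F))=S$ for $i\in\mathcal{A}$, so $(I\colon(F))=\bigcap_{i\in\mathcal{B}}\mathfrak{p}_i$ (the empty intersection being $S$). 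This is again a radical unmixed ideal (or all of $S$), and Proposition~\ref{additivity-of-the-degree} gives $\deg(S/(I\colon(F)))=\sum_{i\in\mathcal{B}}\deg(S/\mathfrak{p}_i)$. Lemma~\ref{jul11-15} applied to $I$ and $F$ identifies $\mathcal{A}$ with the associated primes of $I$ containing $F$ and yields $\deg(S/(I,F))=\sum_{i\in\mathcal{A}}\deg(S/\mathfrak{p}_i)$. Summing the two identities and invoking additivity once more gives $\deg(S/I)=\sum_{i=1}^m\deg(S/\mathfrak{p}_i)=\deg(S/(I\colon(F)))+\deg(S/(I,F))$.

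The main conceptual point---and essentially the only delicate step---is the passage from $(I\colon(F))\neq I$ to the existence of an associated prime of $I$ containing the whole of $F$. This observation underpins both (a) and the application of Lemma~\ref{jul11-15} in (c), and it is the reason (a) genuinely requires the unmixed hypothesis rather than a weaker one; the remainder of the argument is bookkeeping with additivity of degree.
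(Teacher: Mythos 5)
Your proof is correct and follows essentially the same route as the paper: locate an associated prime of $I$ containing all of $F$ (which uses unmixedness to control the height), deduce (a) and then (b) via Lemma~\ref{tohaneanu-aug29-18}(b), and for (c) decompose $I$ into its minimal primes, split the index set by whether $F$ lies in $\mathfrak{p}_i$, and add degrees. The only small variation is in (a): the paper observes that $(F)$ lies in $\mathcal{Z}(S/I)=\bigcup\mathfrak{p}_i$ and invokes prime avoidance, whereas you pick an index $j$ with $g\notin\mathfrak{q}_j$ and use the primary property of $\mathfrak{q}_j$ directly to force $F\subseteq\mathfrak{p}_j$; both are fine, and yours sidesteps prime avoidance at no cost. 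Your proof of (c) matches the paper's computation $(\mathfrak{p}_i\colon(F))=\mathfrak{p}_i$ or $S$, followed by additivity and Lemma~\ref{jul11-15}, exactly.
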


\begin{proof}
(a)  As $I\subsetneq (I\colon (F))$, there is $g\in
S\setminus{I}$ such that $g(F)\subset I$. Hence the ideal
$(F)$ is contained in the set of zero-divisors of $S/I$. Thus, by
Theorem~\ref{zero-divisors} and since $I$ is unmixed, $(F)$ is
contained in an associated
prime ideal $\mathfrak{p}$ of $S/I$ of
height ${\rm ht}(I)$. Thus $I\subset(I,F)\subset\mathfrak{p}$, and
consequently  ${\rm ht}(I)={\rm ht}(I,F)$. Therefore the set of
associated primes of $(I,F)$ of height equal to ${\rm ht}(I)$ is not
empty and is equal to the set of associated primes of $S/I$ that contain $(F)$.  

(b) The inequality follows from part (a) and Lemma \ref{tohaneanu-aug29-18} (b).

(c) Let $\mathfrak{p}_1,\ldots,\mathfrak{p}_m$ be the associated
primes of $S/I$. As $I$ is a radical ideal, one has
the decompositions
$$
I=\bigcap_{i=1}^m\mathfrak{p}_i\ \mbox{ and }\ (I\colon(F))=
\bigcap_{i=1}^m(\mathfrak{p}_i\colon(F)).
$$
\quad Note that $(\mathfrak{p}_i\colon(F))=S$ if
$F\subset\mathfrak{p}_i$ and $(\mathfrak{p}_i\colon
(F))=\mathfrak{p}_i$ if $F\not\subset\mathfrak{p}_i$. Therefore,
using the additivity of the
degree of Proposition~\ref{additivity-of-the-degree} and Lemma~\ref{jul11-15}, we get
$$
\deg(S/(I\colon(F)))=\sum_{F\not\subset\mathfrak{p}_i}\deg(S/\mathfrak{p}_i)
\ \mbox{ and }\ \deg(S/(I,F))=\sum_{F\subset\mathfrak{p}_i}\deg(S/\mathfrak{p}_i).
$$
\quad Thus
$\deg(S/I)=\sum_{i=1}^m\deg(S/\mathfrak{p}_i)=
\deg(S/(I\colon(F)))+\deg(S/(I,F))$.

\end{proof}

\begin{definition} Let $I\subset S$ be a graded ideal. A sequence
$f_1,\ldots,f_r$ of elements of $S$ is called a $(d,r)$-{\it
sequence\/} of $S/I$
if the set $F=\{f_1,\ldots,f_r\}$ is in $\mathcal{F}_{d,r}$
\end{definition}

\begin{lemma} Let $I\subset S$ be a graded ideal. A sequence
$f_1,\ldots,f_r$ is a $(d,r)$-sequence of $S/I$ if and only if
the following conditions hold
\begin{itemize}
\item[(a)] $f_1,\ldots,f_r$ are homogeneous polynomials of $S$ of
degree $d\geq 1$,
\item[(b)] $(I\colon(f_1,\ldots,f_r))\neq I$, and
\item[(c)] $f_i\notin(I,f_1,\ldots,f_{i-1})$ for $i=1,\ldots,r$, where
we set $f_0=0$.
\end{itemize}
\end{lemma}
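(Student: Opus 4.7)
The plan is to separate the three conditions in the definition of $\mathcal{F}_{d,r}$ and match them with (a), (b), (c). Membership $\{f_1,\ldots,f_r\}\in \mathcal{F}_{d,r}$ requires three things: the $f_i$ lie in $S_d$ (matching (a), once we observe $d\geq 1$ since $\mathcal F_{d,r}$ is only defined for $d\in\mathbb N_+$); the colon condition $(I\colon(f_1,\ldots,f_r))\neq I$ (matching (b) verbatim); and the $K$-linear independence of $\overline f_1,\ldots,\overline f_r$ in $S/I$. So the only substantive step is to show that, under (a) and (b), the linear independence condition is equivalent to (c).

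First I would reduce (c) to a statement purely about the degree-$d$ piece. Because $f_1,\ldots,f_{i-1}$ all have the same degree $d$, any element of $(f_1,\ldots,f_{i-1})$ lying in $S_d$ is a combination $g_1f_1+\cdots+g_{i-1}f_{i-1}$ with each $g_j$ homogeneous of degree $0$, i.e. $g_j\in K$. Hence
\[
(I,f_1,\ldots,f_{i-1})\cap S_d \;=\; I_d + Kf_1+\cdots+Kf_{i-1}.
\]
Since $f_i\in S_d$, condition $f_i\notin(I,f_1,\ldots,f_{i-1})$ is equivalent to $f_i\notin I_d+Kf_1+\cdots+Kf_{i-1}$, which says exactly that $\overline f_i\in S/I$ does not lie in the $K$-span of $\overline f_1,\ldots,\overline f_{i-1}$. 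The $i=1$ instance, with the convention $f_0=0$, just says $f_1\notin I$, i.e. $\overline f_1\neq 0$.

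Finally I would invoke the standard linear-algebra criterion: vectors $v_1,\ldots,v_r$ in a $K$-vector space are linearly independent if and only if for each $i=1,\ldots,r$ the vector $v_i$ does not lie in the span of $v_1,\ldots,v_{i-1}$ (with the $i=1$ case reading $v_1\neq 0$). Applying this to $v_i=\overline f_i\in S/I$ and combining with the previous paragraph gives the equivalence of (c) with linear independence of $\overline f_1,\ldots,\overline f_r$.

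There is no real obstacle; the only point requiring care is the degree-$d$ identity $(I,f_1,\ldots,f_{i-1})\cap S_d=I_d+Kf_1+\cdots+Kf_{i-1}$, which depends crucially on the fact that all the $f_j$ have the same degree $d$ so that the only homogeneous coefficients of degree $0$ available are scalars. Once this is in place, the rest is bookkeeping and a standard linear-algebra fact, so the proof is essentially a direct unwinding of definitions.
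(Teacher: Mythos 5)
Your proof is correct and fills in exactly the details that the paper leaves out (the paper only says ``The proof is straightforward''). The key observation — that since all $f_j$ lie in $S_d$, the degree-$d$ piece of $(I,f_1,\ldots,f_{i-1})$ is $I_d+Kf_1+\cdots+Kf_{i-1}$, so condition (c) is precisely the inductive linear-independence criterion for $\overline f_1,\ldots,\overline f_r$ in $S/I$ — is the natural unwinding of the definition of $\mathcal{F}_{d,r}$, and there is no other substantively different route.
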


\begin{proof} The proof is straightforward.
\end{proof}

\begin{definition}\label{vasconcelos-function} If $I\subset S$ is a
graded ideal, the {\it Vasconcelos function\/} of $I$ is the
function
$\vartheta_I\colon \mathbb{N}_+\times\mathbb{N}_+\rightarrow \mathbb{N}$ given by
$$
\vartheta_I(d,r):=\left\{\begin{array}{ll}\min\{\deg(S/(I\colon(F)))\vert\,
F\in\mathcal{F}_{d,r}\}&\mbox{if }\mathcal{F}_{d,r}\neq\emptyset,\\
\deg(S/I)&\mbox{if\ }\mathcal{F}_{d,r}=\emptyset.
\end{array}\right.
$$
\end{definition}

The next result was shown in \cite[Theorem 4.5]{rth-footprint} for
vanishing ideals over finite fields.

\begin{theorem}\label{rth-min-dis-vi}
Let $I\subset S$ be a graded unmixed radical ideal.
Then
$$\vartheta_I(d,r)=\delta_I(d,r)\
\mbox{ for }d\geq 1\mbox{ and }1\leq r\leq H_I(d).$$
\end{theorem}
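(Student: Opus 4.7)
The plan is to reduce the identity directly to part (c) of Lemma~\ref{degree-initial-footprint}, which is the content-bearing step: for a graded unmixed radical ideal $I$ and any set $F$ of homogeneous polynomials with $(I\colon(F))\neq I$, one has the degree decomposition
\[
\deg(S/I)=\deg(S/(I\colon(F)))+\deg(S/(I,F)).
\]
Since every $F\in\mathcal{F}_{d,r}$ satisfies $(I\colon(F))\neq I$ by definition, this formula applies uniformly across the family we are optimizing over.

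First I would dispose of the degenerate case $\mathcal{F}_{d,r}=\emptyset$: both $\vartheta_I(d,r)$ and $\delta_I(d,r)$ are declared to equal $\deg(S/I)$, so the identity is immediate. Next I would assume $\mathcal{F}_{d,r}\neq\emptyset$ and rewrite the above degree decomposition as
\[
\deg(S/(I\colon(F)))=\deg(S/I)-\deg(S/(I,F))\qquad\text{for every }F\in\mathcal{F}_{d,r}.
\]
Because $\deg(S/I)$ is a constant independent of $F$, taking the minimum of the left-hand side over $F\in\mathcal{F}_{d,r}$ is equivalent to subtracting, from $\deg(S/I)$, the maximum of $\deg(S/(I,F))$ over the same family. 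That is precisely
\[
\vartheta_I(d,r)=\deg(S/I)-\max_{F\in\mathcal{F}_{d,r}}\deg(S/(I,F))=\delta_I(d,r),
\]
which is the desired equality.

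There is really no substantive obstacle: once Lemma~\ref{degree-initial-footprint}(c) is in hand, the theorem is a one-line manipulation. The only subtlety worth flagging in the write-up is that the range $1\le r\le H_I(d)$ is exactly the range in which $\mathcal{F}_{d,r}$ can possibly be nonempty (one cannot find $r$ classes in $S_d/I_d$ that are $K$-linearly independent once $r$ exceeds $\dim_K S_d/I_d=H_I(d)$), so the statement is nontrivial only within this range and is otherwise vacuously the constant $\deg(S/I)$ on both sides.
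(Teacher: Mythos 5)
Your proof is correct and follows essentially the same route as the paper: handle $\mathcal{F}_{d,r}=\emptyset$ trivially, then apply Lemma~\ref{degree-initial-footprint}(c) to convert the minimum of $\deg(S/(I\colon(F)))$ into $\deg(S/I)$ minus the maximum of $\deg(S/(I,F))$. Your closing remark about the range $1\le r\le H_I(d)$ is a sensible sanity check but is not needed for the argument.
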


\demo If $\mathcal{F}_{d,r}=\emptyset$, then
$\delta_I(d,r)$ and $\vartheta_I(d,r)$ are equal to
$\deg(S/I)$. Now assume that
$\mathcal{F}_{d,r}\neq \emptyset$.
Using Lemma~\ref{degree-initial-footprint}(c), we obtain
\begin{eqnarray*}
\vartheta_{I}(d,r)&=&\min\{\deg(S/(I\colon(F)))\vert\,
F\in \mathcal{F}_{d,r}\}\\
 &=&\min\{\deg(S/I)-\deg(S/(I,F))\vert\,
F\in \mathcal{F}_{d,r}\} \\
&=&\deg(S/I)-\max\{\deg(S/(I,F))\vert\,
F\in \mathcal{F}_{d,r}\}=\delta_{I}(d,r).\quad \Box
\end{eqnarray*}

As the next result shows for $r=1$ we do not need the assumption
that $I$ is a radical ideal. For $r\geq 2$ this assumption is
essential, as shown in the next Example~\ref{aug12-18}.

\begin{example}\label{aug12-18} Let $I$  be the ideal $(t_1^2,t_1t_2,t_2^2)$ of the
polynomial ring $S=K[t_1,t_2]$ over a field $K$ and let
$F=\{t_1,t_2\}$. Then $(I\colon (F))=(I,F)=(t_1,t_2)$ and
$$
3=\deg(S/I)\neq\deg(S/(I\colon (F)))+\deg(S/(I,F))=2.
$$
\end{example}

\begin{theorem}{\rm\cite[Theorem~4.4]{hilbert-min-dis}}\label{wolmer-obs}
Let $I\subset S$ be an unmixed graded ideal.
If $\mathfrak{m}=(t_1,\ldots,t_s)$ and $d\geq 1$ is an integer such
that $\mathfrak{m}^d\not\subset I$, then
\begin{equation*}
\delta_I(d)=\min\{\deg(S/(I\colon f))\,\vert\, f\in S_d\setminus
I\}.
\end{equation*}
\end{theorem}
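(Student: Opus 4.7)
The plan is to derive the theorem from a degree additivity formula obtained from the graded short exact sequence
\begin{equation*}
0 \longrightarrow (S/(I\colon f))(-d) \xrightarrow{\,\cdot f\,} S/I \longrightarrow S/(I,f) \longrightarrow 0,
\end{equation*}
which is valid for every $f \in S_d \setminus I$. This is the same strategy that underlies Lemma~\ref{degree-initial-footprint}(c), but here the additivity of degree must be established under the mere unmixedness of $I$, without the radical hypothesis used there.

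The first and main step is to show, for each $f \in \mathcal{F}_{d,1}$, that both $S/(I\colon f)$ and $S/(I,f)$ have the same Krull dimension as $S/I$. For $(I\colon f)$: writing a primary decomposition $I = Q_1 \cap \cdots \cap Q_m$ with $\sqrt{Q_i} = \mathfrak{p}_i$, one has $(I\colon f) = \bigcap_{f \notin Q_i}(Q_i\colon f)$, whose associated primes form a nonempty subset of $\{\mathfrak{p}_1,\ldots,\mathfrak{p}_m\}$ (nonempty because $f \notin I$ forces $f \notin Q_i$ for some $i$); the unmixedness of $I$ then forces $\dim S/(I\colon f) = \dim S/I$. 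For $(I,f)$: Theorem~\ref{zero-divisors} supplies an associated prime $\mathfrak{p}$ of $I$ containing $f$, and the inclusion $(I,f) \subset \mathfrak{p}$ together with $\dim S/\mathfrak{p} = \dim S/I$ reverses the trivial inequality $\dim S/(I,f) \leq \dim S/I$.

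Once all three modules in the short exact sequence share the same Krull dimension, comparing leading coefficients of Hilbert polynomials (or just $K$-dimensions if $\dim S/I = 0$) yields the additivity
\begin{equation*}
\deg(S/I) = \deg(S/(I\colon f)) + \deg(S/(I,f)) \quad\text{for every } f \in \mathcal{F}_{d,1},
\end{equation*}
and hence $\min_{f \in \mathcal{F}_{d,1}} \deg(S/(I\colon f)) = \delta_I(d)$. The final step is to reconcile this with the minimum over all of $S_d \setminus I$ appearing in the statement: if $\mathcal{F}_{d,1} \neq \emptyset$, the identity above combined with $\deg(S/(I,f)) \geq 1$ gives $\deg(S/(I\colon f)) < \deg(S/I)$ for $f \in \mathcal{F}_{d,1}$, while every regular $f \in S_d \setminus I$ contributes the maximal value $\deg(S/(I\colon f)) = \deg(S/I)$, so the two minima agree; and if $\mathcal{F}_{d,1} = \emptyset$, every $f \in S_d \setminus I$ (such $f$ exists because $\mathfrak{m}^d \not\subset I$) is regular on $S/I$, so both sides equal $\deg(S/I)$. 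The principal obstacle is the dimension computation for $(I\colon f)$ in the first step, since the non-radical primary decomposition requires careful bookkeeping of which associated primes survive; once that is handled, the degree additivity and the comparison of minima are routine.
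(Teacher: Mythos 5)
Your proof is correct. The crucial step — and the one the paper's Lemma~\ref{degree-initial-footprint}(c) does \emph{not} cover, since it assumes $I$ radical — is the single-element degree additivity $\deg(S/I) = \deg(S/(I\colon f)) + \deg(S/(I,f))$ for unmixed (not necessarily radical) $I$ and $f\in\mathcal{F}_{d,1}$. You establish it correctly: the injection $(S/(I\colon f))(-d)\hookrightarrow S/I$ shows $\operatorname{Ass}(S/(I\colon f))\subset\operatorname{Ass}(S/I)$, so unmixedness forces $\dim S/(I\colon f)=\dim S/I$ (your primary-decomposition bookkeeping is a valid alternate route to the same fact, though the embedding makes it immediate); and $(I,f)$ sitting inside a minimal associated prime of $I$ (Theorem~\ref{zero-divisors} plus unmixedness) gives $\dim S/(I,f)=\dim S/I$. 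With all three modules in the short exact sequence sharing Krull dimension, the Hilbert-polynomial leading-coefficient comparison yields additivity, and the reconciliation between $\mathcal{F}_{d,1}$ and $S_d\setminus I$ in the final step is handled correctly in both the $\mathcal{F}_{d,1}\neq\emptyset$ and $\mathcal{F}_{d,1}=\emptyset$ cases (the latter using $\mathfrak{m}^d\not\subset I$ to guarantee $S_d\setminus I\neq\emptyset$). This also explains why the paper's Example~\ref{aug12-18} is no obstruction: that example uses $|F|=2$, where there is no analogous short exact sequence. Since the present paper only cites this theorem from \cite{hilbert-min-dis} without reproducing a proof, a direct comparison is not possible, but the short-exact-sequence approach you use is the standard and natural one for this statement.
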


Recall from the introduction that the definition of $\delta_I(d,r)$ was motivated by the notion of
generalized Hamming weight of a linear code \cite{helleseth,wei}. The following compilation of facts reflects the monotonicity of the generalized minimum distance function with respect of its two input values for the case of linear codes corresponding to reduced sets of points.

\begin{theorem}\label{jul13-18} Let $C$ be a linear code of length $m$ and dimension
$k$. The following hold.
\begin{itemize}
\item[(a)] \cite[Theorem~1, Corollary~1]{wei}
$1\leq\delta_1(C)<\cdots<\delta_k(C)\leq m$.
\item[(b)] \cite[Corollary~1.1.65]{tsfasman} $r\leq \delta_r(C)\leq
m-k+r$ for $r=1,\ldots,k$.
\item[(c)] If $\delta_1(C)=m-k+1$, then $\delta_r(C)=m-k+r$ for
$r=1,\ldots,k$.
\end{itemize}
\end{theorem}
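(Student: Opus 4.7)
Since parts (a) and (b) of Theorem~\ref{jul13-18} are attributed to Wei \cite{wei} and Tsfasman--Vl\u{a}du\c{t} \cite{tsfasman}, my plan is to recall the key monotonicity input and then derive the remaining statements by short combinatorial arithmetic.

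For part (a), the plan is to establish strict monotonicity $\delta_{r-1}(C)<\delta_r(C)$ by the following subcode-shrinking argument. Let $D\subseteq C$ be a subcode of dimension $r$ with $|\chi(D)|=\delta_r(C)$. Choose any coordinate index $i\in\chi(D)$ and set $D':=\{v\in D \mid v_i=0\}$. Then $D'$ is the kernel of the surjective linear form $v\mapsto v_i$ on $D$, hence $\dim D'=r-1$; moreover $\chi(D')\subseteq \chi(D)\setminus\{i\}$, so $|\chi(D')|\leq |\chi(D)|-1 < \delta_r(C)$. Applied to a dimension-$(r-1)$ subcode, this yields $\delta_{r-1}(C)\leq |\chi(D')|<\delta_r(C)$. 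The bound $\delta_k(C)\leq m$ is immediate because $C$ itself is a subcode of dimension $k$ with $|\chi(C)|\leq m$, and $\delta_1(C)\geq 1$ because a nonzero codeword has support size at least $1$.

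For part (b), the lower bound $\delta_r(C)\geq r$ follows by iterating the strict inequalities in (a):
\[
\delta_r(C) \geq \delta_{r-1}(C)+1 \geq \delta_{r-2}(C)+2 \geq \cdots \geq \delta_1(C)+(r-1) \geq 1+(r-1)=r.
\]
The upper bound $\delta_r(C)\leq m-k+r$ follows by iterating the same strict inequalities downward from $\delta_k(C)\leq m$:
\[
\delta_r(C) \leq \delta_{r+1}(C)-1 \leq \cdots \leq \delta_k(C)-(k-r) \leq m-(k-r)=m-k+r.
\]

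For part (c), the assumption $\delta_1(C)=m-k+1$ is precisely the equality case of the upper bound from (b) at $r=1$. Chaining the strict inequalities of (a) once more gives
\[
\delta_r(C)\geq \delta_1(C)+(r-1)=(m-k+1)+(r-1)=m-k+r,
\]
and the matching upper bound $\delta_r(C)\leq m-k+r$ from (b) forces equality for every $r=1,\ldots,k$. The only nontrivial content of the theorem is therefore the strict monotonicity in (a); everything else is bookkeeping, so I do not anticipate any real obstacle.
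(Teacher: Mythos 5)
Your proof of part (c) is essentially identical to the paper's: both chain the strict inequalities of (a) to get $\delta_r(C)\geq\delta_1(C)+(r-1)=m-k+r$ and then invoke the upper bound in (b) to force equality. The paper treats (a) and (b) purely as citations to Wei and Tsfasman--Vl\u{a}du\c{t} and proves only (c); you additionally re-derive (a) and (b) from scratch, which is correct and uses the standard subcode-shrinking argument (taking $D'=\ker(v\mapsto v_i)$ inside an optimal $D$) that is Wei's original proof, so this is extra work but not a departure in method.
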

\begin{proof} (c):  By (a), one has $m-k+1=\delta_1(C)\leq
\delta_i(C)-(r-1)$. Thus $m-k+r\leq \delta_i(C)$ and, by (b),
equality holds.
\end{proof}

Below we consider more generally the behavior of the generalized minimum distance function and the footprint function for arbitrary graded ideals. The next result shows that the entries of any row (resp. column)
of the weight matrix of $I$ form a non-decreasing (resp.
non-increasing) sequence. Parts (a)-(c) of the next result are broad generalizations of
\cite[Theorem~4.9]{rth-footprint} and
\cite[Theorem~3.6]{footprint-ci}.

\begin{theorem}\label{rth-footprint-lower-bound}%\label{md-unmixed}
Let $I\subset S$ be an unmixed graded ideal, let $\prec$ be a monomial
order on $S$, and let $d\geq 1$, $r\geq 1$ be
integers. The following hold.
\begin{itemize}
\item[\rm(a)] ${\rm fp}_I(d,r)\leq \delta_I(d,r)$ for $1\leq r\leq H_I(d)$.
\item[\rm(b)] $\delta_I(d,r)\geq 1$.
\item[\rm(c)] ${\rm fp}_I(d,r)\geq 1$ if ${\rm in}_\prec(I)$ is unmixed.
\item[\rm(d)] $\delta_I(d,r)\leq \delta_I(d,r+1)$.
\item[\rm(e)] If there is $h\in S_1$ regular on $S/I$, then
$\delta_I(d,r)\geq \delta_I(d+1,r)\geq 1$.
\end{itemize}
\end{theorem}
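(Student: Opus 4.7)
The plan is to dispatch the five parts by combining the degree comparisons of Lemma~\ref{degree-initial-footprint}, the colon lifting of Lemma~\ref{regular-elt-in}, the strict degree inequality of Lemma~\ref{tohaneanu-aug29-18}(b), and the standard-polynomial reformulation of $\delta_I$ given by Proposition~\ref{march27-17}. Parts (a)--(c) will reduce to near-direct applications of these lemmas, while (d) and (e) demand constructions that convert a $(d,r{+}1)$- (respectively $(d,r)$-) sequence into a valid $(d,r)$- (respectively $(d{+}1,r)$-) sequence with comparable degree data.

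For (a), I would first invoke Proposition~\ref{march27-17} to replace $\mathcal{F}_{d,r}$ by $\mathcal{F}_{\prec,d,r}$ in the definition of $\delta_I(d,r)$. A witness $F \in \mathcal{F}_{\prec,d,r}$ then has a set $M := {\rm in}_\prec(F) \subset \Delta_\prec(I)_d$ of pairwise distinct standard monomials; the contrapositive of Lemma~\ref{regular-elt-in} transfers the colon inequality from $I$ to ${\rm in}_\prec(I)$, so $M \in \mathcal{M}_{\prec,d,r}$. Lemma~\ref{degree-initial-footprint}(d) then gives $\deg(S/(I,F)) \leq \deg(S/({\rm in}_\prec(I),M))$, and taking maxima yields ${\rm fp}_I(d,r) \leq \delta_I(d,r)$. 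Parts (b) and (c) both rest on Lemma~\ref{degree-initial-footprint}(b): for (b), linear independence of $\overline{f_1},\ldots,\overline{f_r}$ over $K$ forces $F \not\subset I$, and the strict inequality $\deg(S/(I,F)) < \deg(S/I)$ applies; for (c) the same argument is run with ${\rm in}_\prec(I)$ in place of $I$, using that the elements of $M$ are standard monomials and hence not in ${\rm in}_\prec(I)$.

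For (d), the key step is subsequencing: given $F = \{f_1,\ldots,f_{r+1}\} \in \mathcal{F}_{d,r+1}$, the subset $F' := F \setminus \{f_{r+1}\}$ lies in $\mathcal{F}_{d,r}$, since linear independence is inherited and any $g \in (I\colon(F)) \setminus I$ also sits in $(I\colon(F')) \setminus I$. The inclusion $(I,F') \subseteq (I,F)$, combined with the common height ${\rm ht}(I,F') = {\rm ht}(I,F) = {\rm ht}(I)$ supplied by Lemma~\ref{degree-initial-footprint}(a) and the monotonicity of Lemma~\ref{tohaneanu-aug29-18}(b), forces $\deg(S/(I,F')) \geq \deg(S/(I,F))$, and hence ${\rm hyp}_I(d,r) \geq {\rm hyp}_I(d,r+1)$. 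For (e), the trick is multiplication by the regular linear form: given $F \in \mathcal{F}_{d,r}$, set $hF := \{hf_1,\ldots,hf_r\} \subset S_{d+1}$. Regularity of $h$ on $S/I$ forces $\overline{hf_1},\ldots,\overline{hf_r}$ to remain linearly independent modulo $I$ (a dependence $h \sum \lambda_i f_i \in I$ implies $\sum \lambda_i f_i \in I$), and any $g \in (I\colon(F)) \setminus I$ also lies in $(I\colon(hF)) \setminus I$, so $hF \in \mathcal{F}_{d+1,r}$. The inclusion $(I,hF) \subseteq (I,F)$ together with the same height equality then gives $\deg(S/(I,hF)) \geq \deg(S/(I,F))$, whence $\delta_I(d+1,r) \leq \delta_I(d,r)$; the lower bound $\delta_I(d+1,r) \geq 1$ comes from (b).

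The main obstacle I anticipate lies in part (e), which is the only place the hypothesis of a regular linear form is invoked. The delicate point is to single out multiplication by $h$ as the correct operation lifting $\mathcal{F}_{d,r}$ into $\mathcal{F}_{d+1,r}$, and then to verify in one consistent package the three conditions---preservation of linear independence modulo $I$, preservation of the non-trivial colon, and stability of height---needed to activate the degree comparison of Lemma~\ref{tohaneanu-aug29-18}(b).
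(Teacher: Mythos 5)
Your proposal is correct and tracks the paper's proof essentially line for line: part (a) via Proposition~\ref{march27-17}, Lemma~\ref{regular-elt-in}, and Lemma~\ref{degree-initial-footprint}(d); parts (b),(c) via Lemma~\ref{degree-initial-footprint}(b); part (d) by subsequencing; and part (e) by multiplication by the regular linear form $h$. The only cosmetic difference is that where you cite Lemma~\ref{tohaneanu-aug29-18}(b) to obtain $\deg(S/(I,F'))\geq\deg(S/(I,F))$ (and likewise for $hF$), the paper reads off the same inequality from the short exact sequence $0\to(I,F)/(I,F')\to S/(I,F')\to S/(I,F)\to 0$ — the two are interchangeable since the lemma is itself proved by that exact sequence; you also leave the routine $\mathcal{F}_{d,r}=\emptyset$ cases implicit, which the paper spells out.
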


\demo (a) If $\mathcal{F}_{d,r}=\emptyset$, then
$\delta_I(d,r)=\deg(S/I)\geq {\rm fp}_I(d,r)$. Now assume
$\mathcal{F}_{d,r}\neq\emptyset$. Let $F$ be any set in
$\mathcal{F}_{\prec,d,r}$. By Lemma~\ref{regular-elt-in}, ${\rm
in}_\prec(F)$ is in $\mathcal{M}_{\prec,d,r}$, and by
Lemma~\ref{degree-initial-footprint},
$\deg(S/(I,F))\leq \deg(S/({\rm in}_\prec(I),{\rm in}_\prec(F)))$.
Hence, by Proposition~\ref{march27-17} and
Lemma~\ref{degree-initial-footprint}(b),
${\rm fp}_I(d,r)\leq
\delta_I(d,r)$.

(b) If $\mathcal{F}_{d,r}=\emptyset$, then $\delta_I(d,r)=\deg(S/I)\geq 1$,
and if $\mathcal{F}_{d,r}\neq\emptyset$, then using
Lemma~\ref{degree-initial-footprint}(b) it follows
that $\delta_I(d,r)\geq 1$.

(c) If $\mathcal{M}_{\prec,d,r}=\emptyset$, then
${\rm fp}_I(d,r)=\deg(S/I)\geq 1$. Next assume that
$\mathcal{M}_{\prec,d,r}$ is not empty and pick
$M$ in $\mathcal{M}_{\prec, d,r}$ such that
$$
{\rm fp}_I(d,r)=\deg(S/I)-\deg(S/({\rm in}_\prec(I),M)).
$$
\quad As ${\rm in}_\prec(I)$ is unmixed, by
Lemma~\ref{degree-initial-footprint}(b), ${\rm fp}_I(d,r)\geq 1$.

(d) If  $\mathcal{F}_{d,r+1}$ is empty, then
$ \delta_I(d,r)\leq\deg(S/I)=\delta_I(d,r+1)$. We may then assume
$\mathcal{F}_{d,r+1}$ is not empty and pick $F=\{f_1,\ldots,f_{r+1}\}$
in $\mathcal{F}_{d,r+1}$ such that ${\rm
hyp}_I(d,r+1)=\deg(S/(I,F))$. Setting $F'=\{f_1,\ldots,f_r\}$ and
noticing
that $I\subsetneq (I\colon(F))\subset(I\colon(F'))$, we get
$F'\in\mathcal{F}_{d,r}$. By the proof of Lemma~\ref{degree-initial-footprint}, one has
${\rm ht}(I)={\rm ht}(I,F)={\rm ht}(I,F')$. Taking
Hilbert
functions in the exact sequence
$$
0\longrightarrow (I,F)/(I,F')\longrightarrow S/(I,F')\longrightarrow
S/(I,F)\longrightarrow 0
$$
it follows that $\deg(S/(I,F'))\geq \deg(S/(I,F))$. Therefore
$$
{\rm hyp}_I(d,r)\geq \deg(S/(I,F'))\geq \deg(S/(I,F))={\rm
hyp}_I(d,r+1)\Rightarrow \delta_I(d,r)\leq \delta_I(d,r+1).
$$

(e) By part (b), $\delta_I(d,r)\geq 1$ for $d\geq 1$. Assume
$\mathcal{F}_{d,r}=\emptyset$. Then $\delta_I(d,r)=\deg(S/I)$.
If the set $\mathcal{F}_{d+1,r}$ is empty, one has
$$
\delta_I(d,r)=\delta_I(d+1,r)=\deg(S/I).
$$
\quad If the set $\mathcal{F}_{d+1,r}$ is not empty, there is
$F\in\mathcal{F}_{d+1,r}$ such that
$$
\delta_I(d+1,r)=\deg(S/I)-\deg(S/(I,F))\leq \deg(S/I)=\delta_I(d,r).
$$
\quad Thus we may now assume $\mathcal{F}_{d,r}\neq\emptyset$. Pick
$F=\{f_1,\ldots,f_r\}$ in $\mathcal{F}_{d,r}$ such that
$$
\delta_I(d,r)=\deg(S/I)-\deg(S/(I,F)).
$$
\quad
By assumption there exists $h\in S_1$ such that $(I\colon h)=I$.
Hence the set
$h\overline{F}=\{h\overline{f}_i\}_{i=1}^r$ is linearly
independent over $K$, $hF\subset S_{d+1}$, and
$$
I\subsetneq (I\colon F)\subset(I\colon hF),
$$
that is, $hF$ is in $\mathcal{F}_{d+1,r}$. Note that there exists
$\mathfrak{p}\in{\rm Ass}(S/I)$ that contains $(I,F)$ (see
Lemma~\ref{degree-initial-footprint}(a)). Hence the ideals $(I,F)$ and
$(I,hF)$ have the same height because a prime ideal
$\mathfrak{p}\in{\rm Ass}(S/I)$ contains $(I,F)$ if and only if
$\mathfrak{p}$ contains $(I,hF)$. Therefore taking Hilbert
functions in the exact sequence
$$
0\longrightarrow (I,F)/(I,hF)\longrightarrow S/(I,hF)\longrightarrow
S/(I,F)\longrightarrow 0
$$
it follows that $\deg(S/(I,hF))\geq \deg(S/(I,F))$. As a consequence
we get
\begin{eqnarray*}
\delta_I(d,r)&=&\deg(S/I)-\deg(S/(I,F))\geq \deg(S/I)-\deg(S/(I,hF))\\
&\geq&\deg(S/I)-\max\{\deg(S/(I,F'))\vert\,
F'\in\mathcal{F}_{d+1,r}\}=\delta_I(d+1,r).\ \Box
\end{eqnarray*}

\begin{remark}\label{remark-reg-index}
(a)
Let $I$ be a non-prime ideal and let $\mathfrak{p}$ be an associated prime of $I$.
There is $f\in S_d$, $d\geq 1$, such that $(I\colon f)=\mathfrak{p}$. Note that
$f\in \mathcal{F}_d$. By Theorem~\ref{wolmer-obs} one has
$\delta_I(d)=1$.

(b) If $\dim(S/I)\geq 1$, then ${\rm reg}(\delta_I)$ is the
smallest $n\geq 1$ such that $\delta_I(d)=1$ for $d\geq n$. This
follows from Theorems~\ref{wolmer-obs} and
\ref{rth-footprint-lower-bound}.
\end{remark}

\begin{example}\label{seceleanu-example-determinantal}
Let $S=K[t_1,\ldots,t_6]$ be
a polynomial ring over the finite field $K=\mathbb{F}_3$
and let $I$ be the ideal $(t_1t_6-t_3t_4,t_2t_6-t_3t_5)$.
The regularity and the degree of $S/I$ are $2$ and $4$, respectively,
and $H_I(1)=6$, $H_I(2)=19$. Using
Procedure~\ref{procedure-gmdf} and Theorem~\ref{rth-footprint-lower-bound}(a) we obtain:
$$
({\rm fp}_I(d,r))=\left[\begin{array}{ccccccccccccc}
1&3 & 4 & 4 & 4 & 4 &\infty\\
1& 1 & 1 & 1 & 2 &3&3
\end{array}\right],\ d=1,2 \mbox{ and }r=1,\ldots,7,
$$
and $(\delta_I(1,1),\ldots,\delta_I(1,5))=(3,3,4,4,4)$.
\end{example}

\begin{definition}\rm
If ${\rm fp}_I(d)=\delta_I(d)$ for $d\geq 1$, we say that $I$ is a
{\it Geil--Carvalho ideal\/}. If ${\rm fp}_I(d,r)=\delta_I(d,r)$ for
$d\geq 1$ and $r\geq 1$, we say that $I$ is a
{\it strongly Geil--Carvalho ideal\/}.
\end{definition}

The next result generalizes \cite[Proposition~3.11]{min-dis-ci}.

\begin{proposition}\label{geil-carvalho-monomial} If $I$ is an unmixed monomial ideal and $\prec$ is
any monomial order, then $\delta_I(d,r)={\rm fp}_I(d,r)$ for $d\geq 1$
and $r\geq 1$, that is, $I$ is a strongly Geil--Carvalho ideal.
\end{proposition}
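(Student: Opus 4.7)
The whole proposition rests on one observation: for any monomial ideal $I$ and any monomial order $\prec$, the initial ideal ${\rm in}_\prec(I)$ equals $I$ itself. Consequently, both sides of the desired equality are expressed as $\deg(S/I)$ minus the maximum of $\deg(S/(I,\cdot))$ over some family of subsets: for $\delta_I(d,r)$, after invoking Proposition~\ref{march27-17}, the family is $\mathcal{F}_{\prec,d,r}$; for ${\rm fp}_I(d,r)$, the family is $\mathcal{M}_{\prec,d,r}$. The direction ${\rm fp}_I(d,r) \leq \delta_I(d,r)$ is already provided by Theorem~\ref{rth-footprint-lower-bound}(a), so the task reduces to establishing the reverse inequality.

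The plan is to prove the inclusion $\mathcal{M}_{\prec,d,r} \subseteq \mathcal{F}_{\prec,d,r}$, viewing each monomial as a polynomial. Given $M = \{m_1,\ldots,m_r\} \in \mathcal{M}_{\prec,d,r}$, I would check each condition defining $\mathcal{F}_{\prec,d,r}$: the $m_i$ are standard polynomials of degree $d$ with distinct initial terms (which are the $m_i$ themselves); their images $\overline{m}_1,\ldots,\overline{m}_r$ in $S/I$ are distinct elements of the standard-monomial $K$-basis of $S/I$ (by Macaulay's theorem), hence linearly independent over $K$; and $(I\colon M)\neq I$ is exactly the condition built into $\mathcal{M}_{\prec,d,r}$ once rewritten using ${\rm in}_\prec(I) = I$. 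Thus $M\in \mathcal{F}_{\prec,d,r}$.

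With this inclusion and the identity ${\rm in}_\prec(I) = I$ in hand, the maximum defining ${\rm fp}_I(d,r)$ is taken over a subfamily of that defining $\delta_I(d,r)$, yielding
\[
\max\{\deg(S/(I,M)) \mid M \in \mathcal{M}_{\prec,d,r}\} \;\leq\; \max\{\deg(S/(I,F)) \mid F \in \mathcal{F}_{\prec,d,r}\},
\]
which translates to ${\rm fp}_I(d,r) \geq \delta_I(d,r)$. For the boundary cases, the same inclusion shows that emptiness of $\mathcal{F}_{\prec,d,r}$ forces emptiness of $\mathcal{M}_{\prec,d,r}$, in which case both functions read off $\deg(S/I)$ and agree trivially. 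I do not foresee any significant obstacle: the proof is essentially the verification that the footprint formalism collapses into the $\delta_I$ formalism once the ideal equals its initial ideal, and the only care needed is in lining up the defining conditions of the two families.
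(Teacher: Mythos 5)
Your proof is correct and follows essentially the same route as the paper: both rely on Theorem~\ref{rth-footprint-lower-bound}(a) for the inequality ${\rm fp}_I(d,r)\leq\delta_I(d,r)$ and on the inclusion $\mathcal{M}_{\prec,d,r}\subseteq\mathcal{F}_{\prec,d,r}$ (which is immediate from $I={\rm in}_\prec(I)$) for the reverse inequality. The only cosmetic difference is that the paper records the full equivalence ``$\mathcal{M}_{\prec,d,r}=\emptyset$ iff $\mathcal{F}_{\prec,d,r}=\emptyset$'' to dispose of boundary cases, while you only explicitly use one implication; the remaining case ($\mathcal{M}_{\prec,d,r}=\emptyset$ with $\mathcal{F}_{\prec,d,r}\neq\emptyset$) gives ${\rm fp}_I(d,r)=\deg(S/I)\geq\delta_I(d,r)$ trivially, so the argument is complete.
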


\begin{proof} The inequality $\delta_I(d,r)\geq {\rm fp}_I(d,r)$ follows
from Theorem~\ref{rth-footprint-lower-bound}(a). To show the reverse inequality
notice that $\mathcal{M}_{\prec, d,r}\subset
\mathcal{F}_{\prec, d,r}$ because one has $I={\rm in}_\prec(I)$. Also
notice that $\mathcal{M}_{\prec, d,r}=\emptyset$ if and only if
$\mathcal{F}_{\prec, d,r}=\emptyset$, this follows from
the proof of \cite[Proposition~4.8]{rth-footprint}. Therefore one has
${\rm fp}_I(d,r)\geq \delta_I(d,r)$.
\end{proof}

\begin{proposition}\label{jul15-18} If $I\subset S$ is an unmixed graded
ideal and $\dim(S/I)\geq 1$, then
$$
\delta_I(d,H_I(d))=\deg(S/I)\ \mbox{ for }\
d\geq 1.
$$
\end{proposition}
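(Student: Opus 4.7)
The plan is to show that under the hypothesis $\dim(S/I)\geq 1$, the set $\mathcal{F}_{d,H_I(d)}$ is empty, whence the conclusion follows immediately from the definition of $\delta_I$.

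I would argue by contradiction. Suppose $F=\{f_1,\ldots,f_r\}$ lies in $\mathcal{F}_{d,H_I(d)}$, where $r=H_I(d)$. By definition, $\overline{f}_1,\ldots,\overline{f}_r$ are $K$-linearly independent in $S_d/I_d$; but since $r=H_I(d)=\dim_K(S_d/I_d)$, they actually form a $K$-basis of $S_d/I_d$. In particular,
$$
S_d \;=\; I_d + K\langle f_1,\ldots,f_r\rangle \;\subset\; (I,F),
$$
so $(I,F)$ contains all of $S_d$. Since the irrelevant maximal ideal $\mathfrak{m}=(t_1,\ldots,t_s)$ satisfies $\mathfrak{m}^d=(S_d)$ as an ideal of $S$, we conclude $\mathfrak{m}^d \subset (I,F)$, and therefore $\operatorname{ht}(I,F)=s$.

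On the other hand, membership in $\mathcal{F}_{d,H_I(d)}$ also forces $(I\colon(F))\neq I$, so Lemma~\ref{degree-initial-footprint}(a) gives $\operatorname{ht}(I,F)=\operatorname{ht}(I)=s-\dim(S/I)$. The standing hypothesis $\dim(S/I)\geq 1$ then yields $\operatorname{ht}(I,F)\leq s-1$, contradicting $\operatorname{ht}(I,F)=s$ established above. Hence $\mathcal{F}_{d,H_I(d)}=\emptyset$, and the defining formula gives $\delta_I(d,H_I(d))=\deg(S/I)$.

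The only potential subtle point is the implication $(I,F)_d=S_d \Rightarrow \mathfrak{m}^d\subset(I,F)$, which uses the standard grading to identify $\mathfrak{m}^d$ with the ideal generated by $S_d$. Beyond this bookkeeping, no further obstacle remains; the proof is a short contradiction argument leveraging Lemma~\ref{degree-initial-footprint}(a) together with a dimension count on $S_d/I_d$.
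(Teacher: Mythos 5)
Your proof is correct and follows essentially the same route as the paper's: assume a set $F$ of $H_I(d)$ forms exists in $\mathcal{F}_{d,H_I(d)}$, observe that they give a $K$-basis of $S_d/I_d$ so that $\mathfrak{m}^d$ lands inside the relevant ideal, and derive a contradiction with the hypothesis $\dim(S/I)\geq 1$ plus unmixedness. The only cosmetic difference is that you invoke Lemma~\ref{degree-initial-footprint}(a) to get $\operatorname{ht}(I,F)=\operatorname{ht}(I)$, while the paper re-runs the underlying prime-avoidance argument inline to place $(F)$ (and hence $S_d$) inside a minimal associated prime $\mathfrak{p}_i$ and conclude $\mathfrak{p}_i=\mathfrak{m}$; these are two phrasings of the same reasoning.
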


\begin{proof} We set $r=H_I(d)$. It suffices to show that
$\mathcal{F}_{d,r}=\emptyset$. We proceed by contradiction. Assume
that $\mathcal{F}_{d,r}$ is not empty and let $F=\{f_1,\ldots,f_r\}$
be an element of $\mathcal{F}_{d,r}$. Let
$\mathfrak{p}_1,\ldots,\mathfrak{p}_m$ be
the associated primes
of $I$. As $I\subsetneq(I\colon(F))$, we can pick $g\in S$ such
that $g(F)\subset
I$ and $g\notin I$. Then $(F)$ is contained
$\cup_{i=1}^m\mathfrak{p}_i$, and consequently
$(F)\subset\mathfrak{p}_i$ for some $i$.  Since $r=H_I(d)$, one has
$$
S_d/I_d=K\overline{f}_1\oplus\cdots\oplus K\overline{f}_r\ \Rightarrow\
S_d=Kf_1+\cdots+Kf_r+I_d.
$$
\quad Hence $S_d\subset \mathfrak{p}_i$, that is,
$\mathfrak{m}^d\subset\mathfrak{p}_i$, where
$\mathfrak{m}=(t_1,\ldots,t_s)$. Therefore
$\mathfrak{p}_i=\mathfrak{m}$, a contradiction because $I$ is unmixed
and $\dim(S/I)\geq 1$.
\end{proof}

\begin{example}\label{seceleanu-example-ci}
Let $S=K[t_1,t_2,t_3]$ be
a polynomial ring over a field $K$ and let $({\rm
fp}_{I}(d,r))$ and
$(\delta_I(d,r))$ be the footprint
matrix and the weight matrix of the ideal $I=(t_1^3,t_2t_3)$.
The regularity and the
degree of $S/I$ are $3$ and $6$. Using
Procedure~\ref{procedure-footprint-matrix} we obtain:
$$
({\rm fp}_I(d,r))=\left[\begin{array}{ccccccccccccc}
3&5 & 6 & \infty & \infty & \infty \\
2& 3 & 4 & 5 & 6 &\infty \\
1& 2 & 3 & 4 & 5 & 6
\end{array}\right].
$$

\quad If $r>H_I(d)$, then $\mathcal{M}_{\prec,
d,r}=\emptyset$ and the  $(d,r)$-entry of this matrix is
equal to $6$, but in this case we write $\infty$ for
computational reasons. Therefore, by
Proposition~\ref{geil-carvalho-monomial}, $({\rm
fp}_I(d,r))$ is equal to $(\delta_I(d,r))$. Setting
$F=\{t_1^2t_2,t_1t_2^2,t_1t_3^2,t_1^2t_3\}$ and
$F'=\{t_1^2t_2,t_1t_2^2,t_1t_3^2+t_2^3,t_1^2t_3\}$ ,
we get
$$
\delta_I(3,4)=\deg(S/I)-\deg(S/(I,F)=4\ \mbox{ and }\
\deg(S/I)-\deg(S/(I,F')=5.
$$
\quad Thus $\delta_I(3,4)$ is attained at $F$.
\end{example}

\section{Minimum distance function of a graded
ideal}\label{min-dis-section}

In this section we study minimum distance functions of unmixed graded
ideals whose associated primes are generated by linear
forms and the algebraic invariants of Geramita ideals.

\subsection{Minimum distance function for unmixed ideals}
We begin by introducing the following numerical invariant which will be used to express the regularity index of the minimum distance function (Proposition \ref{monday-morning}).
\begin{definition}
\label{def:v-number}
The v-\textit{number} of a graded ideal $I$,
denoted ${\rm v}(I)$,
is given by
$$
{\rm v}(I):=\begin{cases}\min\{d\geq 1 \mid \,\, \text{there exists $f \in S_d$ and $\mathfrak{p} \in {\rm Ass}(I)$ with $(I\colon f) =\mathfrak{p}$} \} & \mbox{ if }\ I\subsetneq\mathfrak{m},\\
0 &\mbox{ if }\ I=\mathfrak{m},
\end{cases}$$
where ${\rm Ass}(I)$ is the set of associated primes of $S/I$ and
$\mathfrak{m}=(t_1,\ldots,t_s)$ is the irrelevant maximal ideal of $S$.
\end{definition}
The $v$-number is finite for any graded ideal by the definition of associated primes. If $\mathfrak{p}$ is a prime ideal and $\mathfrak{p}\neq \mathfrak{m}$, then ${\rm v}(\mathfrak{p})=1$.

Let $I\subsetneq\mathfrak{m}\subset S$ be a graded ideal and let
$\mathfrak{p}_1,\ldots,\mathfrak{p}_m$ be its associated primes. One
can define the v-number of $I$ locally at each $\mathfrak{p}_i$ by
$$
{\rm v}_{\mathfrak{p}_i}(I):=\mbox{min}\{d\geq 1\, \vert\,
\exists f\in S_d \mbox{ with }(I\colon f)=\mathfrak{p}_i\}.
$$
\quad The {\rm v}-number of $I$ is equal to ${\rm min}\{{\rm
v}_{\mathfrak{p}_1}(I),\ldots,{\rm v}_{\mathfrak{p}_m}(I)\}$. If
$I=I(\mathbb{X})$ is the vanishing ideal
of a finite set $\mathbb{X}=\{P_1,\ldots,P_m\}$ of reduced projective points and
$\mathfrak{p}_i$ is the vanishing ideal of $P_i$, then ${\rm
v}_{\mathfrak{p}_i}(I)$ is the degree of $P_i$ in $\mathbb{X}$ in the
sense of \cite[Definition~2.1]{geramita-cayley-bacharach}.

We give an alternate description for the v-number using initial
degrees of certain modules. This will allow us to compute the v-number
using {\it
Macaulay\/}$2$ \cite{mac2} (see Example~\ref{sep5-18-example}).
For a graded module $M\neq 0$ we denote
$\alpha(M)=\min\{\deg(f) \mid f\in M, f\neq 0\}$. By convention, for $M=0$ we set $\alpha(0)=0$.

\begin{proposition}
\label{lem:vnumber}
 Let $I\subset S$ be an unmixed graded ideal. Then
$I\subsetneq(I\colon\mathfrak{p})$ for $\mathfrak{p}\in{\rm Ass}(I)$,
 $$
{\rm v}(I)=\min\{\alpha\left((I\colon\mathfrak{p})/{I}\right)\vert\,
\mathfrak{p}\in{\rm Ass}(I)\},
$$
and $\alpha\left((I\colon\mathfrak{p})/{I}\right)={\rm
v}_{\mathfrak{p}}(I)$ for
$\mathfrak{p}\in{\rm Ass}(I)$.
\end{proposition}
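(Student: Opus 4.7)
The plan is to prove the three assertions in the order (1), (3), (2), since (2) is an easy consequence of (3) combined with the identity ${\rm v}(I)=\min_{\mathfrak{p}\in{\rm Ass}(I)}{\rm v}_{\mathfrak{p}}(I)$ already recorded in the remark following Definition~\ref{def:v-number}.

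For (1), fix $\mathfrak{p}\in{\rm Ass}(I)$. By definition of associated prime, there is a homogeneous $f\in S$ with $(I\colon f)=\mathfrak{p}$. Then $f\mathfrak{p}\subset I$, which says $f\in (I\colon\mathfrak{p})$, while $f\notin I$ because otherwise $(I\colon f)=S\neq \mathfrak{p}$. Hence $I\subsetneq (I\colon\mathfrak{p})$, and moreover $\bar{f}\neq 0$ is a homogeneous element of $(I\colon\mathfrak{p})/I$ of degree ${\rm v}_{\mathfrak{p}}(I)$, giving at once the inequality $\alpha((I\colon\mathfrak{p})/I)\le {\rm v}_{\mathfrak{p}}(I)$ needed for one half of (3).

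For the reverse inequality in (3), choose a homogeneous $f\in (I\colon\mathfrak{p})\setminus I$ with $\deg f = \alpha((I\colon\mathfrak{p})/I)$. By construction $\mathfrak{p}\subset (I\colon f)$, and the goal is to upgrade this inclusion to an equality, which will show that $\deg f$ is a legitimate candidate in the definition of ${\rm v}_{\mathfrak{p}}(I)$. Every $g\in (I\colon f)$ satisfies $g\cdot\bar{f}=0$ in $S/I$ with $\bar f\neq 0$, so $g$ is a zero-divisor on $S/I$; by Theorem~\ref{zero-divisors}, $(I\colon f)\subset \bigcup_{i=1}^m\mathfrak{p}_i$, where $\mathfrak{p}_1,\ldots,\mathfrak{p}_m$ are the associated primes of $I$. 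Prime avoidance yields $(I\colon f)\subset \mathfrak{p}_j$ for some $j$, so $\mathfrak{p}\subset (I\colon f)\subset \mathfrak{p}_j$. Because $I$ is unmixed, ${\rm ht}(\mathfrak{p})={\rm ht}(\mathfrak{p}_j)$; but in a Noetherian ring a strict containment of primes strictly increases height, so $\mathfrak{p}=\mathfrak{p}_j=(I\colon f)$. This proves $\alpha((I\colon\mathfrak{p})/I)\ge {\rm v}_{\mathfrak{p}}(I)$ and completes (3).

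Finally, taking the minimum over $\mathfrak{p}\in{\rm Ass}(I)$ of the equality in (3) and invoking ${\rm v}(I)=\min_{\mathfrak{p}\in{\rm Ass}(I)}{\rm v}_{\mathfrak{p}}(I)$ gives (2). The main obstacle, though technically mild, is the upgrade from $\mathfrak{p}\subset(I\colon f)$ to equality in the argument for (3): this is the only place where the unmixed hypothesis is essential, because it is exactly what prevents the containment chain $\mathfrak{p}\subset(I\colon f)\subset \mathfrak{p}_j$ from being strict on either side.
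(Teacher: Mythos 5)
Your proof is correct and follows essentially the same line as the paper: both hinge on establishing the equivalence $(I\colon f)=\mathfrak p$ if and only if $f\in(I\colon\mathfrak p)\setminus I$, and both use unmixedness to squeeze the chain $\mathfrak p\subset(I\colon f)\subset\mathfrak p_j$ into equality. The one small variation is how you place $(I\colon f)$ inside an associated prime of $I$: the paper picks $\mathfrak q\in\operatorname{Ass}(I\colon f)$ and uses $\operatorname{Ass}(I\colon f)\subset\operatorname{Ass}(I)$, while you observe that $(I\colon f)\subset\mathcal Z(S/I)=\bigcup_i\mathfrak p_i$ and invoke prime avoidance; these are interchangeable and both are standard, so the difference is cosmetic rather than structural.
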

\begin{proof} The strict inclusion $I\subsetneq(I\colon\mathfrak{p})$
follows from the equivalence of Eq.~(\ref{oct8-18}) below. As a preliminary step of the proof of the equality we establish that for a
prime $\mathfrak{p}\in{\rm Ass}(I)$ we have
\begin{equation}\label{oct8-18}
(I\colon f)=\mathfrak{p}  \text{ if and only if }
f\in(I\colon \mathfrak{p})\setminus I.
\end{equation}
\quad If $(I\colon f)=\mathfrak{p}$, it is clear that we have
$f\in(I\colon \mathfrak{p})$ and since $(I\colon f)\neq S$ it follows that $f\notin I$.
Conversely, if $f\in(I\colon \mathfrak{p})\setminus I$,  then
$\mathfrak{p}\subset(I\colon f)$. Let $\mathfrak{q}\in{\rm
Ass}(I\colon f)$,
which is a nonempty set since $f\notin I$.  Since ${\rm Ass}(I\colon
f)\subset{\rm Ass}(I)$
and $I$ is height unmixed, we have
$\rm{ht}(\mathfrak{q})=\rm{ht}(\mathfrak{p})$
and $\mathfrak{p}\subset(I\colon f)\subset \mathfrak{q}$.
It follows that $\mathfrak{p}=(I\colon f)=\mathfrak{q}$.

The equivalence of Eq.~(\ref{oct8-18}) implies that
$\alpha\left((I\colon\mathfrak{p})/{I}\right)={\rm
v}_{\mathfrak{p}}(I)$, and shows the
equality
$$\{f \mid (I\colon f)=\mathfrak{p} \text{ for some
}\mathfrak{p}\in{\rm Ass}(I)\}=
\bigcup_{\mathfrak{p}\in{\rm Ass}(I)}(I\colon
\mathfrak{p})\setminus{I}.$$
\quad The claim now follows by
considering the minimum degree of a homogeneous element in the above sets.
\end{proof}

\begin{example}\label{sep5-18-example}
Let $S=\mathbb{Q}[t_1,t_2,t_3,t_4]$ be a polynomial ring over the
rational numbers and let $I$ be the ideal of $S$ given by
$$
I=(t_2^{10},t_3^9,t_4^4,t_2t_3t_4^3)\cap(t_1^4,t_3^4,t_4^3,t_1t_3t_4^2)
\cap(t_1^4,t_2^5,t_4^3)\cap(t_1^3,t_2^5,t_3^{10}).
$$
\quad The associated primes of $I$ are $\mathfrak{p}_1=(t_2,t_3,t_4),
\mathfrak{p}_2=(t_1,t_3,t_4), \mathfrak{p}_3=(t_1,t_2,t_4),
\mathfrak{p}_4=(t_1,t_2,t_3)$. Using Proposition~\ref{lem:vnumber}
together with Procedure~\ref{sep5-18} we get ${\rm s}(I)=10$,
${\rm v}(I)=12$, ${\rm reg}(S/I)=19$, ${\rm
v}_{\mathfrak{p}_1}(I)=12$,
${\rm v}_{\mathfrak{p}_2}(I)=15$, ${\rm v}_{\mathfrak{p}_i}(I)=18$
for $i=3,4$. Thus the minimum socle degree ${\rm s}(I)$ can be
smaller than the ${\rm v}$-number ${\rm v}(I)$.
\end{example}

\begin{corollary}\label{socle-vnumber-dim=0}
If $I\subsetneq\mathfrak{m}$ is a graded ideal of
$S$ and $\dim(S/I)=0$, then the minimum socle degree ${\rm s}(I):=\alpha((I:\frak m)/I)$ of $S/I$ is equal
to ${\rm v}(I)$.
\end{corollary}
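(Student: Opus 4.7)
The plan is to reduce the statement immediately to Proposition \ref{lem:vnumber} by verifying that its hypothesis (unmixedness) is automatic in the dimension zero case and that the only associated prime is the irrelevant maximal ideal $\mathfrak{m}$. The first step is to observe that $\dim(S/I)=0$ combined with $I\subsetneq\mathfrak{m}$ forces $\sqrt{I}=\mathfrak{m}$, since $\mathfrak{m}$ is the unique prime containing $I$. Because ${\rm Ass}(I)\subseteq{\rm Supp}(S/I)=V(I)=\{\mathfrak{m}\}$ and ${\rm Ass}(I)$ is nonempty, we conclude ${\rm Ass}(I)=\{\mathfrak{m}\}$. In particular $I$ is height unmixed, so Proposition \ref{lem:vnumber} applies and yields
$$
{\rm v}(I)=\min\{\alpha((I\colon\mathfrak{p})/I)\mid \mathfrak{p}\in{\rm Ass}(I)\}=\alpha((I\colon\mathfrak{m})/I),
$$
which by the definition of ${\rm s}(I)$ recalled in the statement of the corollary is exactly ${\rm s}(I)$.

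There is essentially no obstacle: the only mild subtlety is to confirm that the ``socle degree'' interpretation $\alpha((I\colon\mathfrak{m})/I)$ used in the corollary agrees with the resolution-theoretic Definition \ref{regularity-socle-degree}. This holds because $S/I$ is automatically Cohen--Macaulay when $\dim(S/I)=0$ (depth and dimension both vanish), so $g=s=\dim S$, and the top Betti numbers $b_{s,j}$ are precisely the graded dimensions of a minimal generating set of the socle $(I\colon\mathfrak{m})/I$ shifted by $s$; hence $\min\{j-s\mid b_{s,j}\ne0\}=\alpha((I\colon\mathfrak{m})/I)$. This identification makes the chain of equalities ${\rm s}(I)=\alpha((I\colon\mathfrak{m})/I)={\rm v}(I)$ complete.
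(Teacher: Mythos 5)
Your proof is correct and follows essentially the same route as the paper's: both reduce the statement to Proposition~\ref{lem:vnumber} after identifying $(I\colon\mathfrak{m})/I$ as the socle. You are somewhat more explicit than the paper in verifying that ${\rm Ass}(I)=\{\mathfrak{m}\}$ (hence that the unmixedness hypothesis of Proposition~\ref{lem:vnumber} holds automatically in dimension zero) and in reconciling the $\alpha$-based formula with the resolution-theoretic Definition~\ref{regularity-socle-degree}, but these are points the paper's terse proof implicitly relies on as well.
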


\begin{proof} The socle of $S/I$ is given by ${\rm
Soc}(S/I)=(I\colon\mathfrak{m})/I$. Thus, by
Proposition~\ref{lem:vnumber}, one has the equality
${\rm s}(I)={\rm v}(I)$.
\end{proof}

This corollary does not hold in dimension $1$. There are examples of
Geramita monomial ideals satisfying the strict inequality ${\rm s}(I)<{\rm
v}(I)$ (see Example~\ref{sep5-18-example}). If $S/I$ is
a Cohen--Macaulay ring, the
socle is understood to be the socle of some
Artinian reduction of $S/I$ by linear forms.

\begin{example}\label{Hiram-first-counterxample}
Let $K$ be the finite field $\mathbb{F}_3$ and let $\mathbb{X}$ be the
following set of points in $\mathbb{P}^2$:
$$
\begin{matrix}
[(1,0,1)],&[(1,0,0)],&[(1,0,2)],&[(1,1,0)],&[(1,1,1)],\cr
[(1,1,2)],&[(0,0,1)],&[(0,1,0)],&[(0,1,1)],&[(0,1,2)].
\end{matrix}
$$
\quad Using Propositions~\ref{lem:vnumber} and \ref{monday-morning},
together with Procedure~\ref{sep12-18}, we get ${\rm v}(I)={\rm
reg}(\delta_\mathbb{X})=3$, ${\rm
reg}(S/I)=4$, $\delta_\mathbb{X}(1)=6$, $\delta_\mathbb{X}(2)=3$, and
$\delta_\mathbb{X}(d)=1$ for $d\geq 3$. The vanishing
ideal of $\mathbb{X}$ is generated by $t_1t_2^2-t_1^2t_2$,
$t_1t_3^3-t_1^3t_3$, and $t_2t_3^3-t_2^3t_3$.
\end{example}

\begin{proposition}\label{monday-morning}
Let
$I\subsetneq\mathfrak{m}\subset S$ be an
unmixed graded ideal whose
associated primes are generated by linear forms. Then
${\rm reg}(\delta_I)={\rm v}(I)$.
\end{proposition}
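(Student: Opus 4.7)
The plan is to establish, for every integer $d \geq 1$, the equivalence
\[
\delta_I(d) = 1 \iff \text{there exist } f \in S_d \text{ and } \mathfrak{p} \in {\rm Ass}(I) \text{ with } (I : f) = \mathfrak{p}.
\]
Taking the minimum over $d$ on each side then gives ${\rm reg}(\delta_I) = {\rm v}(I)$ at once.

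First I would dispose of the prime case $I = \mathfrak{p}$: by definition ${\rm reg}(\delta_I) = 1$, and since $I$ is generated by linear forms with $I \subsetneq \mathfrak{m}$, any $f \in S_1 \setminus I$ satisfies $(I : f) = I = \mathfrak{p}$, so ${\rm v}(I) = 1$. Henceforth assume $I$ is not prime. A short additivity argument (Proposition~\ref{additivity-of-the-degree} together with Lemma~\ref{tohaneanu-aug29-18}(b)) then shows $\deg(S/I) \geq 2$, which will be needed to rule out the degenerate case below.

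The technical heart of the proof is the following claim: for $f \in S \setminus I$, one has $\deg(S/(I : f)) = 1$ if and only if $(I : f) \in {\rm Ass}(I)$. I would prove it as follows. The injection $S/(I:f) \hookrightarrow S/I$ given by $\bar g \mapsto \overline{gf}$ forces ${\rm Ass}(S/(I:f)) \subset {\rm Ass}(S/I)$; since $I$ is unmixed, $(I:f)$ is then unmixed of the same height as $I$. Writing an irredundant primary decomposition $(I:f) = \bigcap_j \mathfrak{q}'_j$ with radicals $\mathfrak{p}'_j \in {\rm Ass}(I)$, every $\mathfrak{p}'_j$ is generated by linear forms, so $\deg(S/\mathfrak{p}'_j) = 1$. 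Proposition~\ref{additivity-of-the-degree} gives $\deg(S/(I:f)) = \sum_j \deg(S/\mathfrak{q}'_j)$, and Lemma~\ref{tohaneanu-aug29-18}(b) applied to $\mathfrak{q}'_j \subset \mathfrak{p}'_j$ shows each summand is at least $1$, with equality iff $\mathfrak{q}'_j = \mathfrak{p}'_j$. Hence the total degree equals $1$ precisely when there is a single component that coincides with its radical, i.e., when $(I:f) \in {\rm Ass}(I)$. This interplay between unmixedness, the containment of associated-prime sets, and the strict-inequality bookkeeping from Lemma~\ref{tohaneanu-aug29-18}(b) is the main obstacle.

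To finish I split on $d$. If $\mathfrak{m}^d \not\subset I$, Theorem~\ref{wolmer-obs} yields $\delta_I(d) = \min\{\deg(S/(I:f)) : f \in S_d \setminus I\}$, while Theorem~\ref{rth-footprint-lower-bound}(b) gives $\delta_I(d) \geq 1$; combining these with the claim produces the desired equivalence in this range. If instead $\mathfrak{m}^d \subset I$, then $S_d \subset I$, so no $f$ satisfying the v-number condition exists in degree $d$; simultaneously $\mathcal{F}_{d,1} = \emptyset$ and $\delta_I(d) = \deg(S/I) \geq 2$, so $\delta_I(d) \neq 1$ either. Thus in no degree $d$ is exactly one of the two conditions satisfied, and they are achieved simultaneously at their common minimum, giving ${\rm reg}(\delta_I) = {\rm v}(I)$.
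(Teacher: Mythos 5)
Your proposal is correct and follows essentially the same route as the paper: both directions hinge on Theorem~\ref{wolmer-obs} together with additivity of degree applied to a primary decomposition of $(I\colon f)$ whose associated primes, being among those of $I$, have degree one. Your packaging of the argument as a single clean claim ($\deg(S/(I\colon f))=1$ iff $(I\colon f)\in{\rm Ass}(I)$) proved via the injection $S/(I\colon f)\hookrightarrow S/I$ is a minor organizational variant of the paper's computation with the explicit decomposition $(I\colon f)=\bigcap_{f\notin\mathfrak{q}_i}(\mathfrak{q}_i\colon f)$, but the substance is the same.
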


\begin{proof} Let $\mathfrak{p}_1,\ldots,\mathfrak{p}_m$ be the
associated primes of $I$. We may assume that $I$ not a prime ideal, otherwise
${\rm reg}(\delta_I)={\rm v}(I)=1$. If $d_1={\rm v}(I)$, there are $f\in S_{d_1}$
and $\mathfrak{p}_i$ such that $(I\colon f)=\mathfrak{p}_i$. Then, by
Theorem~\ref{wolmer-obs}, one has $\delta_I(d_1)=1$. Thus
${\rm reg}(\delta_I)\leq{\rm v}(I)$.

To show the reverse inequality set we set $d_0={\rm reg}(\delta_I)$.
Then $\delta_I(d_0)=1$. Note that $\mathfrak{m}^{d_0}\not\subset I$;
otherwise $\mathcal{F}_{d_0}(I)=\emptyset$ and by definition
$\delta_I(d_0)$ is equal to $\deg(S/I)$, a contradiction because
$I\subsetneq\mathfrak{m}$ and by Lemma~\ref{tohaneanu-aug29-18}
$\deg(S/I)>1$. Then, by Theorem~\ref{wolmer-obs}, there
is $f\in S_{d_0}\setminus I$ such
that $\delta_I(d_0)=\deg(S/(I\colon f))=1$. Let
$I=\cap_{i=1}^m\mathfrak{q}_i$ be the minimal primary
decomposition of $I$, where $\mathfrak{q}_i$ is a
$\mathfrak{p}_i$-primary ideal. Note that $(\mathfrak{q}_i\colon f)$
is a primary ideal if $f\notin \mathfrak{q}_i$ because
$S/(\mathfrak{q}_i\colon f)$ is embedded in $S/\mathfrak{q}_i$. Thus
the primary decomposition of $(I\colon f)$ is $\cap_{f\notin
\mathfrak{q}_i}(\mathfrak{q}_i\colon f)$. Therefore, by the additivity of the
degree of Proposition~\ref{additivity-of-the-degree}, we get that
$(I\colon f)=(\mathfrak{q}_k\colon f)$ for some $k$ such that $f\notin
\mathfrak{q}_k$ and $\deg(S/(\mathfrak{q}_k\colon f))=1$.
Since $S/\mathfrak{p}_k$ has also degree $1$ and
$(\mathfrak{q}_k\colon f)\subset\mathfrak{p}_k$, by
Lemma~\ref{tohaneanu-aug29-18}, we get $(I\colon f)=(\mathfrak{q}_k\colon f)=\mathfrak{p}_k$, and
consequently ${\rm v}(I)\leq{\rm reg}(\delta_I)$.
\end{proof}

\begin{corollary}\label{md-decreasing}
Let $I\subset S$ be an unmixed radical graded ideal. If all the associated primes
of $I$ are generated by linear forms and $v={\rm v}(I)$ is its
${\rm v}$-number, then
$$
\delta_I(1)>\cdots>\delta_I(v-1)>\delta_I(v)=\delta_I(d)=1\
\mbox{ for }\ d\geq v.
$$
\end{corollary}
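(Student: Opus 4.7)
The plan is to split the statement into (i) stabilization of $\delta_I$ at $1$ from index $v$ onward, and (ii) strict monotonicity on $\{1,\ldots,v\}$. Piece (i) is essentially free: Proposition~\ref{monday-morning} gives $\mathrm{reg}(\delta_I)=v(I)=v$, and since $I$ must have $\dim(S/I)\geq 1$ (otherwise $I=\mathfrak{m}$ by radicality and unmixedness, contradicting $v\geq 1$), Remark~\ref{remark-reg-index}(b) yields $\delta_I(d)>1$ for $d<v$ and $\delta_I(d)=1$ for every $d\geq v$. So the real work is to prove $\delta_I(d)>\delta_I(d+1)$ whenever $\delta_I(d)\geq 2$.

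For that, I first recast $\delta_I(d)$ combinatorially. Write the associated primes of $I$ as $\mathfrak{p}_1,\ldots,\mathfrak{p}_m$; each is generated by linear forms, so $\deg(S/\mathfrak{p}_i)=1$, and $I=\bigcap_{i=1}^m\mathfrak{p}_i$ by radicality. For $f\in S_d\setminus I$ one has $(I:f)=\bigcap_{f\notin \mathfrak{p}_i}\mathfrak{p}_i$, and Lemma~\ref{degree-initial-footprint}(c) together with the additivity of degree (Proposition~\ref{additivity-of-the-degree}) yields $\deg(S/(I:f))=\#\{i : f\notin \mathfrak{p}_i\}$. Invoking Theorem~\ref{wolmer-obs} (whose hypothesis $\mathfrak{m}^d\not\subset I$ is automatic since $I$ is radical of positive dimension), I obtain
\[
\delta_I(d) \;=\; \min\bigl\{|J(f)| : f\in S_d\setminus I\bigr\},\qquad J(f):=\{i : f\notin \mathfrak{p}_i\}.
\]

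Now suppose $\delta_I(d)=c\geq 2$, achieved by $f\in S_d\setminus I$ with $|J(f)|=c$. Pick two distinct indices $j,k\in J(f)$. Unmixedness forces $\mathfrak{p}_j\not\subset \mathfrak{p}_k$, and since $\mathfrak{p}_j$ is generated by linear forms, at least one such generator lies outside $\mathfrak{p}_k$; choose $\ell\in(\mathfrak{p}_j)_1\setminus\mathfrak{p}_k$. Then $\ell f\in S_{d+1}$, and $\ell,f\notin\mathfrak{p}_k$ imply $\ell f\notin\mathfrak{p}_k$, so $\ell f\notin I$. Moreover $(I:\ell f)=\bigcap_{i\in J(f),\ \ell\notin \mathfrak{p}_i}\mathfrak{p}_i$ drops the index $j$ but retains $k$, so $|J(\ell f)|\leq c-1$. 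Theorem~\ref{wolmer-obs} then yields $\delta_I(d+1)\leq c-1<\delta_I(d)$, delivering the strict chain $\delta_I(1)>\cdots>\delta_I(v)=1$.

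The only delicate point---and the one I expect to be the main obstacle---is the existence of the linear form $\ell\in(\mathfrak{p}_j)_1\setminus\mathfrak{p}_k$; this is exactly where the two hypotheses interact. Unmixedness provides $\mathfrak{p}_j\not\subset \mathfrak{p}_k$, and linear-form generation of $\mathfrak{p}_j$ then supplies a \emph{degree-one} witness, which is crucial so that $\ell f$ still lands in $S_{d+1}$. Because the construction only needs a linear form separating two specific primes rather than one avoiding all primes simultaneously, no hypothesis on $|K|$ enters, and in particular no regular linear form on $S/I$ is required.
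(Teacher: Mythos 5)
Your proof is correct. The paper's own proof is a one‑line citation: it invokes \cite[Theorem~3.8]{footprint-ci} for the strict decrease and Proposition~\ref{monday-morning} to identify the stabilization point. You replace the external citation with a self-contained argument, and your argument is sound. The combinatorial reformulation $\delta_I(d)=\min\{|J(f)|:f\in S_d\setminus I\}$ follows correctly from Theorem~\ref{wolmer-obs} (applicable because $\mathfrak{m}^d\subset I$ would force $I=\mathfrak{m}$ by radicality), Lemma~\ref{degree-initial-footprint}(c), the additivity of degree (Proposition~\ref{additivity-of-the-degree}), and the fact that a prime generated by linear forms has degree $1$. The key strict-decrease step is also right: unmixedness gives $\mathfrak{p}_j\not\subset\mathfrak{p}_k$ from $\mathfrak{p}_j\neq\mathfrak{p}_k$ of equal height, so some linear generator $\ell$ of $\mathfrak{p}_j$ avoids $\mathfrak{p}_k$; primeness of each $\mathfrak{p}_i$ gives $J(\ell f)=J(f)\cap\{i:\ell\notin\mathfrak{p}_i\}$, which loses $j$ but keeps $k$, hence is nonempty and strictly smaller. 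Combined with Proposition~\ref{monday-morning} (via the definition of ${\rm reg}(\delta_I)$ and Remark~\ref{remark-reg-index}(b)), which guarantees $\delta_I(d)\geq 2$ for $d<v$ and $\delta_I(d)=1$ for $d\geq v$, the full chain follows. Two features worth highlighting: your argument makes explicit that no hypothesis on $|K|$ and no regular linear form on $S/I$ are needed (the separating linear form only needs to distinguish two specified primes, not avoid all of them), and it keeps the proof internal to this paper rather than deferring to \cite{footprint-ci}, at the cost of being longer. One minor inaccuracy in your framing: the fact that $\delta_I(d)>1$ for $d<v$ comes directly from the definition of ${\rm reg}(\delta_I)$ as the smallest $d$ with $\delta_I(d)=1$, not from Remark~\ref{remark-reg-index}(b); the remark supplies only the stabilization $\delta_I(d)=1$ for $d\geq v$.
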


\begin{proof} It follows from \cite[Theorem~3.8]{footprint-ci} and
Proposition~\ref{monday-morning}.
\end{proof}

The minimum distance function behaves well asymptotically.

\begin{corollary} Let $I\subsetneq\mathfrak{m}\subset S$ be an
unmixed graded ideal of dimension $\geq 1$ whose
associated primes are generated by linear forms. Then
$\delta_I(d)=1$ for $d\geq {\rm v}(I)$.
\end{corollary}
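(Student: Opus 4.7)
The plan is to combine Proposition~\ref{monday-morning}, which equates ${\rm reg}(\delta_I)$ with ${\rm v}(I)$, with an inductive degree-raising argument that produces in every degree $d\geq {\rm v}(I)$ an explicit polynomial witnessing $\delta_I(d)=1$ via Theorem~\ref{wolmer-obs}.

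By Proposition~\ref{monday-morning} and the definition of the regularity index of $\delta_I$, we immediately get $\delta_I({\rm v}(I))=1$, and the definition of the ${\rm v}$-number provides a homogeneous $f\in S_{{\rm v}(I)}$ together with an associated prime $\mathfrak{p}$ of $I$ such that $(I\colon f)=\mathfrak{p}$. By hypothesis $\mathfrak{p}$ is generated by linear forms, so $\deg(S/\mathfrak{p})=1$, and since $\dim(S/I)\geq 1$ we have $\mathfrak{p}\subsetneq\mathfrak{m}$; in particular $\mathfrak{p}\cap S_1$ is a proper $K$-subspace of $S_1$.

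The key step is to show by induction on $d$ that for every $d\geq{\rm v}(I)$ there exists $g_d\in S_d\setminus I$ with $(I\colon g_d)=\mathfrak{p}$. The base case $d={\rm v}(I)$ is the witness $f$ above. For the inductive step, choose any linear form $\ell\in S_1\setminus\mathfrak{p}$ (which exists by the previous paragraph) and set $g_{d+1}:=\ell g_d$. Then $g_{d+1}\notin I$, because otherwise $\ell\in(I\colon g_d)=\mathfrak{p}$, a contradiction; and using that $\mathfrak{p}$ is prime with $\ell\notin\mathfrak{p}$, one computes $(I\colon g_{d+1})=((I\colon g_d)\colon \ell)=(\mathfrak{p}\colon \ell)=\mathfrak{p}$, completing the induction.

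Finally, since $I\subseteq\mathfrak{p}\subsetneq\mathfrak{m}$, we have $\mathfrak{m}^d\not\subset I$ for every $d\geq 1$, so Theorem~\ref{wolmer-obs} applies and gives
$$\delta_I(d)\leq\deg(S/(I\colon g_d))=\deg(S/\mathfrak{p})=1$$
for every $d\geq{\rm v}(I)$, which combined with $\delta_I(d)\geq 1$ from Theorem~\ref{rth-footprint-lower-bound}(b) yields the claimed equality. The only subtlety is that one cannot simply invoke the monotonicity $\delta_I(d)\geq \delta_I(d+1)$ from Theorem~\ref{rth-footprint-lower-bound}(e), since that requires a linear form regular on all of $S/I$, a hypothesis that can fail over small finite fields; the inductive construction bypasses this obstacle because it only needs a linear form outside a single associated prime.
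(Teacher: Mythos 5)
Your proof is correct. The paper disposes of this corollary in one line by citing Remark~\ref{remark-reg-index}(b) (that $\mathrm{reg}(\delta_I)$ is the stabilization index) together with Proposition~\ref{monday-morning}; your argument effectively unpacks the content of that remark in the setting where the associated primes are linear, and is more self-contained. The key mechanism in both is the same: a degree-$v(I)$ witness $f$ with $(I\colon f)=\mathfrak{p}$ certifies $\delta_I(v(I))\le\deg(S/\mathfrak{p})=1$ via Theorem~\ref{wolmer-obs}, and one must then propagate this to all higher degrees. Your inductive degree-raising $g_{d+1}=\ell g_d$ with $\ell\in S_1\setminus\mathfrak{p}$, using that $\mathfrak{p}\subsetneq\mathfrak{m}$ since $I$ is unmixed of positive dimension, is a clean and correct way to do this. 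You are also right to flag that invoking the monotonicity $\delta_I(d)\ge\delta_I(d+1)$ of Theorem~\ref{rth-footprint-lower-bound}(e) would require a linear form regular on all of $S/I$ (a hypothesis the corollary does not impose and which can fail over small fields), whereas your construction only needs a linear form avoiding one associated prime; this makes your version slightly more robust than a literal reading of the cited remark, at the cost of a bit more work. One small simplification: you do not actually need Proposition~\ref{monday-morning} to start the induction, since the definition of the $v$-number together with Theorem~\ref{wolmer-obs} already gives $\delta_I(v(I))\le 1$ directly.
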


\begin{proof} This follows from Remark~\ref{remark-reg-index}(b) and
Proposition~\ref{monday-morning}.
\end{proof}

The next result relates the minimum socle degree and the v-number.

\begin{proposition}\label{s-v-numbers}
Let $I\subset S$ be an unmixed non-prime graded ideal whose associated primes are
generated by linear forms and let $h\in S_1$ be a regular element on
$S/I$. The following hold:
\begin{itemize}
\item[(a)] If $\delta_I(d)=\deg(S/(I\colon f))$,
$f\in\mathcal{F}_d\cap(I,h)$, then $d\geq 2$ and $\delta_I(d)=\delta_I(d-1)$.
\item[(b)] If $S/I$ is Cohen--Macaulay, then
${\rm v}(I,h)\leq{\rm v}(I)$.
\item[(c)] If $K$ is infinite and $S/I$ is Cohen--Macaulay, then ${\rm s}(I)\leq {\rm
v}(I)$.
\end{itemize}
\end{proposition}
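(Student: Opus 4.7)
For (a), I would begin by using the hypothesis $f \in (I,h)$ to write $f = a + hb$ with $a \in I$ and $b \in S_{d-1}$ homogeneous. Regularity of $h$ gives $(I\colon h)=I$, so a direct computation yields
\[
(I\colon f)=(I\colon hb)=\bigl((I\colon h)\colon b\bigr)=(I\colon b).
\]
If $d=1$, then $b$ would be a scalar, forcing either $f\in I$ or $(I\colon b)=I$, each contradicting $f\in\mathcal{F}_d$; hence $d\geq 2$ and $b\in\mathcal{F}_{d-1}$. Since $f\notin I$ shows $\mathfrak{m}^d\not\subset I$, which in turn gives $\mathfrak{m}^{d-1}\not\subset I$, Theorem~\ref{wolmer-obs} applied at degree $d-1$ produces
\[
\delta_I(d-1)\leq \deg(S/(I\colon b))=\deg(S/(I\colon f))=\delta_I(d),
\]
and combining with the monotonicity $\delta_I(d-1)\geq\delta_I(d)$ of Theorem~\ref{rth-footprint-lower-bound}(e) forces equality.

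For (b), set $v={\rm v}(I)$ and pick a homogeneous $f\in S_v$ with $(I\colon f)=\mathfrak{p}$ for some $\mathfrak{p}\in{\rm Ass}(I)$. Regularity of $h$ gives $h\notin\mathfrak{p}$, and since $\mathfrak{p}$ is generated by linearly independent linear forms, $\mathfrak{p}+(h)$ is prime of height ${\rm ht}(\mathfrak{p})+1={\rm ht}(I,h)$. I first verify $f\notin (I,h)$: otherwise writing $f=a+hb$ with $a\in I$ and arguing as in (a), the relation $\mathfrak{p} f\subset I$ together with $(I\colon h)=I$ forces $\mathfrak{p} b\subset I$, placing $b\in (I\colon\mathfrak{p})\setminus I$ in degree $v-1$, contradicting $\alpha((I\colon\mathfrak{p})/I)\geq v$ from Proposition~\ref{lem:vnumber}. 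The containment $\mathfrak{p}+(h)\subseteq ((I,h)\colon f)$ is then immediate. Because $h$ is regular on the Cohen--Macaulay ring $S/I$, the ring $S/(I,h)$ is also Cohen--Macaulay, so ${\rm Ass}(S/(I,h))$ consists of minimal primes of height ${\rm ht}(I,h)$. Every associated prime of $((I,h)\colon f)$ lies in ${\rm Ass}(S/(I,h))$ and contains $\mathfrak{p}+(h)$, hence equals $\mathfrak{p}+(h)$ by the height coincidence. Thus $((I,h)\colon f)$ is $(\mathfrak{p}+(h))$-primary and so contained in its radical $\mathfrak{p}+(h)$; the two inclusions combine to give $((I,h)\colon f)=\mathfrak{p}+(h)$, whence ${\rm v}(I,h)\leq v$.

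For (c), since $K$ is infinite and $S/I$ is Cohen--Macaulay of dimension $k=\dim(S/I)$, I extend $h=h_1$ to a regular sequence of linear forms $h_1,\ldots,h_k$ on $S/I$. At each stage $(I,h_1,\ldots,h_i)$ is Cohen--Macaulay, so its associated primes coincide with its minimal primes; an induction (using that regularity keeps $h_{i+1}$ linearly independent modulo each $\mathfrak{p}+(h_1,\ldots,h_i)$) shows these are exactly $\mathfrak{p}+(h_1,\ldots,h_i)$ for $\mathfrak{p}\in{\rm Ass}(I)$ and hence remain generated by linear forms, while $h_{i+1}$ is regular on the quotient. Iterating part (b) gives ${\rm v}(J)\leq {\rm v}(I)$ where $J=(I,h_1,\ldots,h_k)$. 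Since $\dim(S/J)=0$, Corollary~\ref{socle-vnumber-dim=0} yields ${\rm s}(J)={\rm v}(J)$. Finally, the identification ${\rm s}(I)={\rm s}(J)$ is the standard fact that socle degrees of a Cohen--Macaulay algebra agree with those of any Artinian reduction by a regular sequence of linear forms; equivalently, tensoring a minimal free resolution of $S/I$ with the Koszul complex on $h_1,\ldots,h_k$ shifts top-degree graded Betti numbers in homological and internal degree by the same amount. Chaining these gives ${\rm s}(I)={\rm s}(J)={\rm v}(J)\leq {\rm v}(I)$.

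The main obstacle I anticipate is in (b), where the containment $\mathfrak{p}+(h)\subseteq ((I,h)\colon f)$ must be upgraded to equality. Cohen--Macaulayness is essential here: it promotes $(I,h)$ to an unmixed ideal, and this is what pins down every associated prime of $((I,h)\colon f)$ to $\mathfrak{p}+(h)$ via a height argument. A secondary delicate point is ruling out $f\in (I,h)$, which ultimately rests on the minimality built into the definition of the v-number.
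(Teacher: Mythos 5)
Your proof is correct and follows the paper's approach essentially step for step: the same telescoping identity $(I\colon f)=(I\colon b)$ in (a), the same key equality $((I,h)\colon f)=\mathfrak{p}+(h)$ via unmixedness of the Cohen--Macaulay quotient in (b), and the same iteration to an Artinian reduction combined with Corollary~\ref{socle-vnumber-dim=0} in (c). The only (minor) variation is in (b), where you rule out $f\in(I,h)$ directly by showing $b\in(I\colon\mathfrak{p})\setminus I$ would live in degree ${\rm v}(I)-1$, and deduce ${\rm v}(I,h)\leq{\rm v}(I)$ straight from the definition, whereas the paper routes both steps through part (a) and the regularity-index identity ${\rm reg}(\delta_{(I,h)})={\rm v}(I,h)$ of Proposition~\ref{monday-morning}; both are valid and of comparable length.
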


\begin{proof} (a) Writing $f=g+f_1h$, for some $g\in I_d$ and $f_1\in
S_{d-1}$, one has $(I\colon f)=(I\colon f_1)$. Note that $d\geq 2$,
otherwise if $d=1$, then $(I\colon f)=I$, a contradiction because
$f\in\mathcal{F}_d$. Therefore noticing that
$f_1\in\mathcal{F}_{d-1}$,
by Theorems~\ref{wolmer-obs} and
\ref{rth-footprint-lower-bound}, we obtain
$$
\delta_I(d)=\deg(S/(I\colon f))=\deg(S/(I\colon f_1))\geq
\delta_I(d-1)\geq \delta_I(d)\ \Rightarrow\ \delta_I(d)=\delta_I(d-1).
$$
\quad (b) We set $v={\rm v}(I)$. By Proposition~\ref{lem:vnumber}
there is an associated
prime $\mathfrak{p}$ of $I$ and
$f\in(I\colon\mathfrak{p})\setminus I$ such that $f\in S_v$. Then
$(I\colon f)=\mathfrak{p}$, $f\in\mathcal{F}_d$, and $\delta_I(d)=\deg(S/(I\colon f))=1$.
We claim that $f$ is not in $(I,h)$. If $f\in(I,h)$, then by part (a)
one has $v\geq 2$ and $\delta_I(v-1)=1$, a contradiction because
 $v$ is the regularity index of $\delta_I$ (see
Proposition~\ref{monday-morning}). Thus $f\notin(I,h)$. Next we show
the equality $(\mathfrak{p},h)=((I,h)\colon f)$. The inclusion
``$\subset$'' is clear because $(I\colon f)=\mathfrak{p}$. Take an
associated prime $\mathfrak{p}'$ of $((I,h)\colon f)$. The height of
$\mathfrak{p}'$ is ${\rm ht}(I)+1$ because $(I,h)$ is
Cohen--Macaulay. Then $\mathfrak{p}'=(\mathfrak{p}'',h)$ for some
$\mathfrak{p}''$ in ${\rm Ass}(I)$. Taking into account that
$\mathfrak{p}$ and $\mathfrak{p}''$ are generated by linear forms, we
get the equality $(\mathfrak{p},h)=(\mathfrak{p}'',h)$. Thus
$(\mathfrak{p},h)$ is equal to $((I,h)\colon f)$. Hence
$\delta_{(I,h)}(v)=1$,  and consequently
${\rm v}(I,h)={\rm reg}(\delta_{(I,h)})\leq {\rm reg}(\delta_I)={\rm
v}(I)=v$.

(c) There exists a system of parameters
$\underline{h}=h_1,\ldots,h_t$ of $S/I$ consisting of linear forms,
where $t=\dim(S/I)$. As $S/I$ is Cohen--Macaulay, $\underline{h}$ is a
regular sequence on $S/I$. Hence, by part (b), we obtain
$$
{\rm v}(I,\underline{h})={\rm v}(I,h_1,\ldots,h_t)\leq \cdots\leq
{\rm v}(I,h_1)\leq {\rm v}(I).
$$
\quad Thus, by Corollary~\ref{socle-vnumber-dim=0}, we get
${\rm s}(I)={\rm s}(I,\underline{h})=
\alpha(((I,\underline{h})\colon\frak{m})/(I,h))={\rm
v}(I,\underline{h})\leq{\rm v}(I)$.
\end{proof}

\subsection{Minimum distance function for Geramita ideals and Cayley-Bacharach ideals}

The minimum socle degree ${\rm s}(I)$, the local v-number ${\rm v}_\mathfrak{p}(I)$, and
the regularity ${\rm reg}(S/I)$, are
related below. For complete intersections of dimension $1$ they are
all equal. In particular in this case one has $\delta_I(d)\geq 2$ for
$1\leq d<{\rm reg}(S/I)$.

\begin{theorem}\label{banff-july-22-29-2018}
Let $I\subset S$ be a Geramita ideal and $\mathfrak{p}\in{\rm
Ass}(I)$. If $I$ is not prime, then
$$
{\rm s}(I)\leq {\rm v}_\mathfrak{p}(I) \leq{\rm reg}(S/I),
$$
with equality everywhere if $S/I$ is a level ring.
\end{theorem}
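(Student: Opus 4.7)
The key structural observation is that $S/I$ is Cohen--Macaulay of dimension one: since $I$ is unmixed of dimension one with associated primes of height $s-1$, the maximal ideal $\mathfrak{m}$ is not associated to $I$, whence $\mathrm{depth}(S/I) \geq 1 = \dim(S/I)$. The plan is to reduce to an Artinian quotient by a regular linear form and read off all three invariants on that reduction. After extending scalars to an infinite field---which preserves $\mathrm{s}(I)$ and $\mathrm{reg}(S/I)$ (graded Betti numbers survive flat base change) as well as $\mathrm{v}_\mathfrak{p}(I) = \alpha((I : \mathfrak{p})/I)$ (since $\mathfrak{p}$ stays prime, being generated by linear forms)---prime avoidance supplies a linear form $h \in S_1$ regular on $S/I$. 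Crucially, $\mathfrak{p}$ is generated by $s-1$ linearly independent linear forms and $h \notin \mathfrak{p}$, so the ideal identity $\mathfrak{m} = \mathfrak{p} + (h)$ holds. Reduction modulo $h$ turns $S/I$ into the Artinian algebra $\bar R := S/(I, h)$, with $\mathrm{reg}(\bar R) = \mathrm{reg}(S/I)$ and $\mathrm{s}(\bar R) = \mathrm{s}(I)$.

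For the upper bound $\mathrm{v}_\mathfrak{p}(I) \leq \mathrm{reg}(S/I)$, I would pick $f \in (I : \mathfrak{p}) \setminus I$ of minimum degree $d = \mathrm{v}_\mathfrak{p}(I)$, as granted by Proposition~\ref{lem:vnumber}, and show that $\bar f$ is a nonzero socle element of $\bar R$ in degree $d$. The socle condition $f \mathfrak{m} \subset (I, h)$ is immediate from $f \mathfrak{p} \subset I$ together with $\mathfrak{m} = \mathfrak{p} + (h)$. The nonvanishing $f \notin (I, h)$ is forced by minimality: if $f = g + h g_1$ with $g \in I$, then $h g_1 \in (I : \mathfrak{p}) \setminus I$, and regularity of $h$ on $S/I$ yields $g_1 \in (I : \mathfrak{p}) \setminus I$ in degree $d-1$, contradicting minimality of $d$ (the edge case $d = 1$ being ruled out by the hypothesis that $I$ is not prime, which prevents $g_1 \in K$ from landing in $(I:\mathfrak{p})\setminus I$). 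Hence $d$ is at most the top socle degree of $\bar R$, which equals $\mathrm{reg}(\bar R) = \mathrm{reg}(S/I)$.

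For the lower bound $\mathrm{s}(I) \leq \mathrm{v}_\mathfrak{p}(I)$, Proposition~\ref{s-v-numbers}(c) applies (Cohen--Macaulayness is in hand, $K$ has been made infinite) and gives $\mathrm{s}(I) \leq \mathrm{v}(I)$; then $\mathrm{v}(I) \leq \mathrm{v}_\mathfrak{p}(I)$ is immediate from the definition of the global v-number as a minimum over associated primes. If $S/I$ is level, then $\bar R$ is level and its socle is concentrated in a single degree, which must equal both $\mathrm{s}(\bar R) = \mathrm{s}(I)$ and the top nonzero degree $\mathrm{reg}(\bar R) = \mathrm{reg}(S/I)$; the socle element $\bar f$ constructed above therefore lives in that unique degree, forcing $\mathrm{v}_\mathfrak{p}(I) = \mathrm{reg}(S/I)$ and collapsing the whole chain to equalities. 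The main obstacle I expect is the bookkeeping around the base change to an infinite field and the verification of $\mathfrak{m} = \mathfrak{p} + (h)$ as a genuine ideal equality (not merely up to radical); both hinge critically on the Geramita hypothesis that $\mathfrak{p}$ is generated by linear forms, so the argument does not survive weakening that assumption to an arbitrary unmixed $1$-dimensional ideal.
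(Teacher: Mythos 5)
Your proof is correct, and the lower bound $\mathrm{s}(I)\le\mathrm{v}_{\mathfrak p}(I)$ is obtained exactly as in the paper: base change to $\overline K$, invoke Proposition~\ref{s-v-numbers}(c), and use the trivial inequality $\mathrm{v}(I)\le\mathrm{v}_{\mathfrak p}(I)$. The interesting divergence is in the upper bound $\mathrm{v}_{\mathfrak p}(I)\le\mathrm{reg}(S/I)$, where you take a genuinely different route. The paper argues by contradiction on Hilbert functions: if $\mathrm{v}_{\mathfrak p}(I)>\mathrm{reg}(S/I)=r_0$, then $(I':I)_{r_0}=0$ forces $H_I(r_0)=H_{I'}(r_0)$, and since both sides compute degrees (using Theorem~\ref{hilbert-function-dim=1} in dimension $1$) one concludes $\deg(S/I)\le\deg(S/I')$, contradicting the strict degree drop. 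Notably that argument needs no regular linear form and hence works over any field. Your argument instead reduces modulo a regular linear form $h$ and shows directly that the minimal-degree element $f\in(I:\mathfrak p)\setminus I$ survives in $\bar R=S/(I,h)$ as a nonzero socle element; the observation that makes this work is the ideal identity $\mathfrak m=\mathfrak p+(h)$, which is exactly where the Geramita hypothesis (linear generation of $\mathfrak p$) bites. Your route requires the base change even for the upper bound (since $h$ may not exist over a finite field), which the paper avoids, but it is more illuminating: it exhibits the actual socle element witnessing the bound, and it makes the level case transparent, since concentration of $\mathrm{Soc}(\bar R)$ in a single degree immediately pins $\mathrm{v}_{\mathfrak p}(I)$ to $\mathrm{reg}(S/I)$ without routing through the full inequality chain. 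The minimality argument for $\bar f\neq 0$ (using regularity of $h$ to peel off a factor of $h$ and drop the degree) and the $d=1$ edge case are handled correctly. Two small bookkeeping points you wave at but should be spelled out if written up: that $\mathrm{v}_{\mathfrak p}(I)=\mathrm{v}_{\overline{\mathfrak p}}(\overline I)$ follows because $(I:\mathfrak p)\overline S=\overline I:\overline{\mathfrak p}$ and graded Betti/Hilbert data is preserved by faithfully flat base change, and that the top socle degree of an Artinian quotient coincides with its regularity.
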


\begin{proof} We set $M=S/I$, $r_0={\rm reg}(S/I)$,
$n={\rm v}_\mathfrak{p}(I)$, and $I'=(I\colon\mathfrak{p})$.
To show the inequality $n\leq r_0$ we proceed by contradiction.
Assume that $n>r_0$. The $S$-modules in the exact sequence
$$0\longrightarrow I'/I\longrightarrow S/I \longrightarrow S/I' \longrightarrow 0$$
are nonzero Cohen--Macaulay modules of dimension $1$. Indeed, that $I'/I\neq
0$ (resp. $S/I'\neq 0$) follows from Proposition~\ref{lem:vnumber}
(resp. $I$ is not prime). That the modules are Cohen--Macaulay follows
observing that $I$ and $I'$ are unmixed ideals of dimension $1$.
Since $n$ is ${\rm v}_\mathfrak{p}(I)$ and $r_0<n$, one has
$(I'/I)_{r_0}=0$ (see the equivalence of Eq.~(\ref{oct8-18}) in the
proof of Proposition~\ref{lem:vnumber}). Hence taking Hilbert
functions in the above exact sequence in degree
$d=r_0$ (resp. for $d\gg 0$), by
Theorem~\ref{hilbert-function-dim=1},
we get
$$
\deg(S/I)=H_I(r_0)=H_{I'}(r_0)\leq\deg(S/I')\quad (\mbox{resp. }\
\deg(S/I)=\deg(I'/I)+\deg(S/I')).
$$
\quad As $I'/I\neq 0$, $\deg(I'/I)>0$. Hence $\deg(S/I)>\deg(S/I')$, a
contradiction. Thus $n\leq r_0$.

To show the inequality ${\rm s}(I)\leq {\rm v}_\mathfrak{p}(I)$ we
make a change of coefficients. Consider the algebraic
closure $\overline{K}$ of $K$. We set
$$
\overline{S}=S\otimes_K
\overline{K}=\overline{K}[t_1,\ldots,t_s]\ \mbox{ and }\ \overline{I}=I
\overline{S}.
$$
\quad Note that $K\hookrightarrow \overline{K}$ is a faithfully flat extension.
Apply the functor $S\otimes_{K}(-)$. By base change, it follows that
$S\hookrightarrow \overline{S}$ is a faithfully flat extension.
Therefore $H_I(d)=H_{\overline{I}}(d)$ for $d\geq 0$ and
$\deg(S/I)=\deg(\overline{S}/\overline{I})$. Furthermore the
minimal graded free resolutions and the Hilbert series of $S/I$ and
$\overline{S}/\overline{I}$ are identical. Thus $S/I$
and $\overline{S}/\overline{I}$ have the same regularity, ${\rm
s}(I)={\rm s}(\overline{I})$, and
$\overline{I}$ is Cohen--Macaulay of dimension $1$. The
ideal $\overline{\mathfrak{p}}=\mathfrak{p}\overline{S}$ is a prime
ideal of $\overline{S}$ because $\mathfrak{p}$ is generated by linear
forms, and so is $\overline{\mathfrak{p}}$. The ideal $\overline{I}$
is Geramita. To show this, let $I=\cap_{i=1}^m\mathfrak{q}_i$ be the
minimal primary decomposition of $I$, where $\mathfrak{q}_i$ is a
$\mathfrak{p}_i$-primary ideal. Since $\mathfrak{p}_i\overline{S}$ is
prime, the ideal $\mathfrak{q}_i\overline{S}$ is a
$\mathfrak{p}_i\overline{S}$-primary ideal of $\overline{S}$, and the
minimal primary decomposition of $\overline{I}$ is
$$
\overline{I}=\Bigg(\bigcap_{i=1}^m\mathfrak{q}_i\Bigg)\overline{S}=
\bigcap_{i=1}^m\big(\mathfrak{q}_i\overline{S}\big),
$$
see \cite[Sections~3.H, 5.D and 9.C]{Mat}. Thus $\overline{I}$ is a Geramita
ideal. Recall that $n\geq 1$ is the smallest integer
such that there is $f\in S_n$ with $(I\colon f)=\mathfrak{p}$. Fix
$f$ with these two properties. Then
$f\in(I\colon\mathfrak{p})\setminus I$ and since $\overline{I}\cap S=I$ and
$(I\colon\mathfrak{p})\overline{S}=(I\overline{S}\colon\mathfrak{p}\overline{S})$, one has
$f\in(I\overline{S}\colon\mathfrak{p}\overline{S})\setminus
I\overline{S}$. Therefore, setting
$\overline{\mathfrak{p}}=\mathfrak{p}\overline{S}$, we obtain
${\rm v}_{\overline{\mathfrak{p}}}(\overline{I})
\leq {\rm v}_{\mathfrak{p}}(I)$. Altogether using Proposition~\ref{s-v-numbers}(c), we obtain
$$
{\rm s}(I)={\rm s}(\overline{I})\leq
{\rm v}(\overline{I})\leq{\rm v}_{\overline{\mathfrak{p}}}(\overline{I})
\leq {\rm v}_{\mathfrak{p}}(I)\leq{\rm reg}(S/I)={\rm reg}(\overline{S}/\overline{I}).
$$
\quad If $S/I$ is level then so is $\overline{S}/\overline{I}$, because the Betti numbers ($b_{i,j}$ in Definition \ref{regularity-socle-degree}) for $S/I$ and $\overline{S}/\overline{I}$ agree \cite[6.10]{Eisen}.
Furthermore, since  the ring $\overline{S}/\overline{I}$ is level, we have ${\rm s}(\overline{I})={\rm reg}(\overline{S}/\overline{I})$ by \cite[4.13, 4.14]{eisenbud-syzygies} and which gives equality everywhere.
\end{proof}

\begin{definition}\cite{Guardo-Marino-Van-Tuyl,tohaneanu-vantuyl}\label{separator-def} Let
$Z=a_1P_1+\cdots+a_mP_m\subset\mathbb{P}^{s-1}$ be a set of fat
points, and suppose that $Z'=a_1P_1+\cdots+(a_i-1)P_i+\cdots+a_mP_m$
for some $i=1,\ldots,m$. We call $f\in S_d$ a \textit{separator of $P_i$ of
multiplicity} $a_i$ if $f\in I(Z')\setminus I(Z)$. The \textit{vanishing
ideal} $I(Z)$ of $Z$ is $\cap_{i=1}^m\mathfrak{p}_i^{a_i}$, where
$\mathfrak{p}_i$ is the vanishing ideal of $P_i$. If $Z$ is a set of
reduced points (i.e., $a_1=\dots=a_m=1$), the \textit{degree} of $P_i$,
denoted $\deg_Z(P_i)$, is the least degree of a separator of $P_i$ of multiplicity 1.
\end{definition}

\begin{remark}\label{separator-obs} If $f$ is a separator of $P_i$ of
multiplicity $a_i$
and $\mathfrak{p}_i$ is
the vanishing ideal of $P_i$, then $f\in (I\colon
\mathfrak{p}_i)\setminus I$. The converse hold if $a_i=1$.
\end{remark}

\begin{corollary}\cite[Theorem~3.3]{tohaneanu-vantuyl}
Let $Z=a_1P_1+\cdots+a_mP_m\subset\mathbb{P}^{s-1}$ be a set of fat
points, and suppose that $Z'=a_1P_1+\cdots+(a_i-1)P_i+\cdots+a_mP_m$
for some $i=1,\ldots,m$. If $f$ is a separator of $P_i$ of
multiplicity $a_i$, then $\deg(f)\geq {\rm v}(I)\geq {\rm s}(I)$.
\end{corollary}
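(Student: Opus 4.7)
The plan is to combine Remark~\ref{separator-obs}, Proposition~\ref{lem:vnumber}, and Theorem~\ref{banff-july-22-29-2018} in a short chain, with essentially no computation beyond unpacking definitions. First, I would observe that $I=I(Z)=\bigcap_{j=1}^m\mathfrak{p}_j^{a_j}$ is a Geramita ideal: its associated primes $\mathfrak{p}_j$ are vanishing ideals of reduced projective points, hence generated by linear forms and all of the same height $s-1$, so $I$ is a $1$-dimensional unmixed graded ideal and in particular Proposition~\ref{lem:vnumber} applies to it.

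Second, by Remark~\ref{separator-obs}, a separator $f$ of $P_i$ of multiplicity $a_i$ satisfies $f\in(I\colon\mathfrak{p}_i)\setminus I$. Replacing $f$ by a nonzero homogeneous component of minimum degree if necessary, we may regard $f$ as a nonzero homogeneous element of the graded $S$-module $(I\colon\mathfrak{p}_i)/I$. Then Proposition~\ref{lem:vnumber} yields
$$
\deg(f)\ \geq\ \alpha\bigl((I\colon\mathfrak{p}_i)/I\bigr)\ =\ {\rm v}_{\mathfrak{p}_i}(I)\ \geq\ {\rm v}(I),
$$
where the final inequality is the definition of the global v-number as the minimum of the local v-numbers over ${\rm Ass}(I)$. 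This establishes the first of the two desired inequalities, $\deg(f)\geq {\rm v}(I)$.

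Third, the remaining inequality ${\rm v}(I)\geq {\rm s}(I)$ follows immediately from Theorem~\ref{banff-july-22-29-2018}: assuming $I$ is not prime, one has ${\rm s}(I)\leq {\rm v}_{\mathfrak{p}}(I)$ for every $\mathfrak{p}\in{\rm Ass}(I)$, and taking the minimum over $\mathfrak{p}$ gives ${\rm s}(I)\leq {\rm v}(I)$. The only edge case is when $I$ is prime, i.e.\ $Z$ is a single reduced point ($m=1$, $a_1=1$); then $Z'$ is empty and the assertion is vacuous or trivial. I expect no serious obstacle: the only subtlety is ensuring that the forward implication in Remark~\ref{separator-obs} is available for arbitrary multiplicities $a_i$, not merely the reduced case $a_i=1$ (for which the converse also holds). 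Once this is in hand, the corollary is a direct consequence of the two earlier results.
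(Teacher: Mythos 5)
Your proposal is correct and follows essentially the same route as the paper: identify $f\in(I\colon\mathfrak{p}_i)\setminus I$ via Remark~\ref{separator-obs}, deduce $\deg(f)\geq{\rm v}_{\mathfrak{p}_i}(I)\geq{\rm v}(I)$ from Proposition~\ref{lem:vnumber}, and conclude ${\rm v}(I)\geq{\rm s}(I)$ from Theorem~\ref{banff-july-22-29-2018}. (Your remark about taking a homogeneous component is unnecessary since a separator is by Definition~\ref{separator-def} already an element of $S_d$, but it does no harm.)
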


\begin{proof} If $f$ is a separator of $P_i$ of multiplicity $a_i$
and $\mathfrak{p}_i$ be the vanishing ideal of $P_i$,
then $f\in(I\colon \mathfrak{p}_i)\setminus I$. Hence, by
Proposition~\ref{lem:vnumber} and
Theorem~\ref{banff-july-22-29-2018},
one has $\deg(f)\geq {\rm v}(I)\geq {\rm s}(I)$.
\end{proof}

A finite set $\mathbb{X}=\{P_1,\ldots,P_m\}$ of reduced points in
$\mathbb{P}^{s-1}$ is \textit{Cayley-Bacharach} if every hypersurface of degree less than
${\rm reg}(S/I(\mathbb{X}))$ which contains all but one
point of $\mathbb{X}$ must contain all the points of $\mathbb{X}$ or
equivalently if $\deg_\mathbb{X}(P_i)={\rm reg}(S/I(\mathbb{X}))$ for
all $i=1,\ldots,m$ \cite[Definition~2.7]{geramita-cayley-bacharach}.
Since $\deg_\mathbb{X}(P_i)={\rm v}_{\mathfrak{p}_i}(I)$, where
$\mathfrak{p}_i$ is the vanishing ideal of $P_i$,
by Theorem~\ref{banff-july-22-29-2018} one can extend this notion to Geramita ideals.

\begin{definition}\label{cayley-bacharach-def} A Geramita ideal $I\subset S$ is called
\textit{Cayley--Bacharach} if
${\rm v}_\mathfrak{p}(I)$ is equal to ${\rm reg}(S/I)$
for all $\mathfrak{p}\in{\rm Ass}(I)$.
\end{definition}

As the next result shows Cayley-Bacharach ideals are connected to Reed--Muller type codes
and to minimum distance functions.

\begin{corollary}\label{oct11-18} A Geramita ideal $I\subset S$ is Cayley--Bacharach
if and only if $${\rm reg}(\delta_I)={\rm v}(I)={\rm reg}(S/I).$$
\end{corollary}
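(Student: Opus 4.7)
The plan is to derive the corollary as a direct consequence of Proposition~\ref{monday-morning} and Theorem~\ref{banff-july-22-29-2018}, since together they already contain all the algebraic content needed. First I would note that, for a Geramita ideal $I$, we automatically have $I \subsetneq \mathfrak{m}$ (because $\dim(S/I)=1$) and the associated primes are generated by linear forms, so Proposition~\ref{monday-morning} applies and yields the identity ${\rm reg}(\delta_I) = {\rm v}(I)$ without further hypothesis. Thus the corollary reduces to showing
\[
I \text{ is Cayley--Bacharach} \iff {\rm v}(I) = {\rm reg}(S/I).
\]

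For the forward implication, I would assume $I$ is Cayley--Bacharach, so ${\rm v}_\mathfrak{p}(I) = {\rm reg}(S/I)$ for every $\mathfrak{p} \in {\rm Ass}(I)$. Since by definition ${\rm v}(I) = \min\{{\rm v}_\mathfrak{p}(I) \mid \mathfrak{p} \in {\rm Ass}(I)\}$, the minimum of a constant collection is that same constant, giving ${\rm v}(I) = {\rm reg}(S/I)$. Combined with ${\rm reg}(\delta_I) = {\rm v}(I)$ from Proposition~\ref{monday-morning}, this yields the chain of equalities in the statement.

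For the converse, assume ${\rm v}(I) = {\rm reg}(S/I)$. By Theorem~\ref{banff-july-22-29-2018}, each local v-number satisfies the upper bound ${\rm v}_\mathfrak{p}(I) \leq {\rm reg}(S/I)$, so
\[
{\rm reg}(S/I) = {\rm v}(I) = \min_{\mathfrak{p} \in {\rm Ass}(I)} {\rm v}_\mathfrak{p}(I) \leq {\rm v}_\mathfrak{p}(I) \leq {\rm reg}(S/I)
\]
for every $\mathfrak{p} \in {\rm Ass}(I)$. Forcing equality throughout gives ${\rm v}_\mathfrak{p}(I) = {\rm reg}(S/I)$ for every associated prime, which is precisely the Cayley--Bacharach condition of Definition~\ref{cayley-bacharach-def}.

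There is no real obstacle here; the only subtlety worth flagging is the degenerate case in which $I$ is prime, where Theorem~\ref{banff-july-22-29-2018}'s inequality was proved under the assumption that $I$ is not prime. In that case $I$ is itself a linear prime, so $S/I$ is a polynomial ring with ${\rm reg}(S/I)=0$, while ${\rm v}(I)=1={\rm reg}(\delta_I)$ by convention; thus the equivalence holds vacuously once one adopts the (natural) convention that prime Geramita ideals are not Cayley--Bacharach, and no further argument is needed.
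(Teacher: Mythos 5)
Your proof is correct and follows the same route the paper takes: the paper's proof is literally the one-liner ``It follows from Proposition~\ref{monday-morning} and Theorem~\ref{banff-july-22-29-2018},'' and you have simply unpacked that, applying Proposition~\ref{monday-morning} to get ${\rm reg}(\delta_I)={\rm v}(I)$ and Theorem~\ref{banff-july-22-29-2018} to sandwich each local v-number between ${\rm v}(I)$ and ${\rm reg}(S/I)$. One small quibble: your handling of the prime case is sound but the phrasing ``adopts the (natural) convention that prime Geramita ideals are not Cayley--Bacharach'' is misleading --- this is not a convention but follows directly from Definition~\ref{cayley-bacharach-def}, since for $I=\mathfrak{p}$ a non-maximal prime one has ${\rm v}_\mathfrak{p}(I)=1$ while ${\rm reg}(S/I)=0$, so the defining condition fails; and the biconditional is not ``vacuously'' true in that case but true because both sides are false.
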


\begin{proof} It follows from Proposition~\ref{monday-morning} and
Theorem~\ref{banff-july-22-29-2018}.
\end{proof}

There are some families of Reed--Muller type codes where the minimum
distance and its index of regularity are known
\cite{cartesian-codes,sorensen}. In these cases one can
determine whether or not the corresponding sets of points are
Cayley--Bacharach.

\begin{corollary} If $K=\mathbb{F}_q$ is a finite field and
$\mathbb{X}=\mathbb{P}^{s-1}$, then $I(\mathbb{X})$ is
Cayley--Bacharach.
\end{corollary}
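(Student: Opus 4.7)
My plan is to exploit the transitive action of $\mathrm{GL}_s(\mathbb{F}_q)$ on $\mathbb{P}^{s-1}(\mathbb{F}_q)$ together with Theorem~\ref{banff-july-22-29-2018}. Set $I := I(\mathbb{X})$ and let $\mathfrak{p}_1,\ldots,\mathfrak{p}_n$ be its associated primes, corresponding to the points $P_1,\ldots,P_n$ of $\mathbb{X}$. The ideal $I$ is a Geramita ideal (being the radical, $1$-dimensional vanishing ideal of a finite set of reduced projective points), so by Definition~\ref{cayley-bacharach-def} it suffices to prove that ${\rm v}_{\mathfrak{p}_i}(I) = {\rm reg}(S/I)$ for every $i$.

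First I would observe that $\mathrm{GL}_s(\mathbb{F}_q)$ acts on $S$ by linear substitutions of variables; this action preserves degrees and maps $I$ to itself, while the induced action on $\mathbb{X}$ is transitive and hence permutes $\{\mathfrak{p}_1,\ldots,\mathfrak{p}_n\}$ transitively. Consequently the local v-numbers ${\rm v}_{\mathfrak{p}_i}(I)$ are independent of $i$; call their common value $v$. By Theorem~\ref{banff-july-22-29-2018} we have $v \leq {\rm reg}(S/I)$, so the task reduces to proving the reverse inequality $v \geq {\rm reg}(S/I)$.

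To this end I would use Proposition~\ref{lem:vnumber} to select a minimum-degree homogeneous element $f_1 \in (I\colon\mathfrak{p}_1)\setminus I$, so $\deg f_1 = v$. The equivalence \eqref{oct8-18} proved there gives $(I\colon f_1) = \mathfrak{p}_1$, which means $f_1(P_1)\neq 0$ while $f_1(P_j)=0$ for $j\neq 1$, i.e., $f_1$ is a separator for $P_1$. Transporting $f_1$ by an element of $\mathrm{GL}_s(\mathbb{F}_q)$ that sends $P_1$ to $P_i$ produces a degree-$v$ polynomial $f_i$ with the analogous separating property at $P_i$. The matrix $(f_i(P_j))$ is then diagonal with nonzero diagonal entries, so the classes $\overline{f}_1,\ldots,\overline{f}_n$ are linearly independent in $S_v/I_v$, yielding $H_I(v)\geq n = \deg(S/I)$. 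Since $S/I$ is $1$-dimensional Cohen--Macaulay, Theorem~\ref{hilbert-function-dim=1}(ii) forces $H_I(v)=n$ and therefore $v$ to be at least the regularity index of $H_I$, which coincides with ${\rm reg}(S/I)$ in this setting.

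Combining both inequalities yields $v = {\rm reg}(S/I)$, hence ${\rm v}_{\mathfrak{p}_i}(I) = {\rm reg}(S/I)$ for all $i$, so $I$ is Cayley--Bacharach. The only nonstandard ingredient is the observation that transitivity propagates a single minimum-degree separator into a whole $n$-element linearly independent family of separators in the same degree; once this is in hand the rest is a direct application of results already proven in the paper, and I do not anticipate any serious obstacle.
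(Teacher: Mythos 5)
Your proof is correct, but it takes a genuinely different route from the paper's. The paper's proof is a one-line citation: it invokes S\o rensen's explicit computation of the minimum distance of projective Reed--Muller codes over $\mathbb{P}^{s-1}$ (from \cite{sorensen}), which gives that the regularity index of $\delta_{I(\mathbb{X})}$ equals ${\rm reg}(S/I(\mathbb{X}))$, and then applies Corollary~\ref{oct11-18} (the characterization ${\rm reg}(\delta_I)={\rm v}(I)={\rm reg}(S/I)$ of Cayley--Bacharach ideals). Your argument bypasses S\o rensen's formula entirely: you use the transitive action of $\mathrm{GL}_s(\mathbb{F}_q)$ to force all local v-numbers to coincide, get $v\leq{\rm reg}(S/I)$ from Theorem~\ref{banff-july-22-29-2018}, and then establish the reverse inequality directly by propagating a single degree-$v$ separator into an $n$-element family of degree-$v$ separators whose evaluation matrix is a nonsingular diagonal, forcing $H_I(v)=\deg(S/I)$ and hence $v\geq{\rm ri}(S/I)={\rm reg}(S/I)$. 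Your approach is more self-contained (no external coding-theoretic computation needed) and arguably more conceptual, showing exactly why homogeneity of $\mathbb{P}^{s-1}$ does the work; the paper's approach is shorter but leans on a black-box result. One small point worth making explicit in a polished writeup is the identification ${\rm ri}(S/I)={\rm reg}(S/I)$ for the one-dimensional Cohen--Macaulay ring $S/I$, which you use at the end; it is standard and implicitly used elsewhere in the paper, but is not stated there as a numbered fact.
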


\begin{proof} It follows from Corollary~\ref{oct11-18} because
according to \cite{sorensen} the regularity index of
$\delta_{I(\mathbb{X})}$ is equal to ${\rm reg}(S/I(\mathbb{X}))$.
\end{proof}

Next we give a lemma that allows comparisons between
the generalized minimum distances of ideals related by containment.

\begin{lemma}
\label{lem:Fcomparison}
If $I,I'$ are unmixed graded ideals of the same height
and $J$ is a graded ideal such that $I'=(I\colon J$), then
$\mathcal{F}_{d}(I')\subset \mathcal{F}_{d}(I)$ and
$$\deg(S/I')-\delta_{I'}(d)\leq \deg(S/I)-\delta_{I}(d).$$
\end{lemma}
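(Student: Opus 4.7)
The plan is to split the statement into two independent pieces: the set inclusion $\mathcal{F}_d(I')\subseteq \mathcal{F}_d(I)$ and the numerical inequality, which once rewritten reduces to $\mathrm{hyp}_{I'}(d)\leq \mathrm{hyp}_I(d)$.

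For the inclusion, I would take $f\in \mathcal{F}_d(I')$ and verify both defining conditions. Since $I\subseteq (I\colon J)=I'$, the condition $f\notin I'$ forces $f\notin I$. For the colon condition, the key observation is that $\mathrm{Ass}(S/I')\subseteq \mathrm{Ass}(S/I)$. This follows by primary decomposition: writing an irredundant primary decomposition $I=\bigcap_{i=1}^m\mathfrak{q}_i$ with $\mathrm{rad}(\mathfrak{q}_i)=\mathfrak{p}_i$, one has $I'=\bigcap_{i=1}^m(\mathfrak{q}_i\colon J)$, and each $(\mathfrak{q}_i\colon J)$ is either equal to $S$ (if $J\subseteq \mathfrak{q}_i$) or $\mathfrak{p}_i$-primary, so the associated primes of $I'$ form a subset of $\{\mathfrak{p}_1,\ldots,\mathfrak{p}_m\}=\mathrm{Ass}(S/I)$. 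Since $(I'\colon f)\neq I'$, by Theorem~\ref{zero-divisors} the element $f$ lies in some $\mathfrak{p}\in\mathrm{Ass}(S/I')\subseteq \mathrm{Ass}(S/I)$, and another application of Theorem~\ref{zero-divisors} gives $(I\colon f)\neq I$, so $f\in\mathcal{F}_d(I)$.

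For the inequality, note that $\deg(S/I')-\delta_{I'}(d)=\mathrm{hyp}_{I'}(d)$ and $\deg(S/I)-\delta_I(d)=\mathrm{hyp}_I(d)$ whenever the corresponding $\mathcal{F}_d$-sets are nonempty (and both quantities are $0$ otherwise). If $\mathcal{F}_d(I')=\emptyset$ the left side is $0\leq \mathrm{hyp}_I(d)$, so the inequality is immediate. Otherwise, for any $f\in \mathcal{F}_d(I')$ the containment $I\subseteq I'$ yields $(I,f)\subseteq (I',f)$, and by Lemma~\ref{degree-initial-footprint}(a) both ideals have height equal to $\mathrm{ht}(I)=\mathrm{ht}(I')$. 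Lemma~\ref{tohaneanu-aug29-18}(b) (applied in its non-strict form: either the containment is equality and degrees agree, or it is strict and degrees strictly decrease) gives $\deg(S/(I,f))\geq \deg(S/(I',f))$. Taking the maximum over $f\in \mathcal{F}_d(I')$ on both sides and then enlarging the supremum on the right to range over $\mathcal{F}_d(I)$ produces
$$\mathrm{hyp}_{I'}(d)=\max_{f\in\mathcal{F}_d(I')}\deg(S/(I',f))\leq \max_{f\in\mathcal{F}_d(I')}\deg(S/(I,f))\leq \mathrm{hyp}_I(d),$$
which is the required bound.

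The main obstacle is the inclusion of associated primes $\mathrm{Ass}(S/I')\subseteq \mathrm{Ass}(S/I)$; once this standard commutative-algebra fact is in place, the rest of the argument is a routine combination of Lemma~\ref{degree-initial-footprint}(a) with Lemma~\ref{tohaneanu-aug29-18}(b) and the definition of $\mathrm{hyp}_I$.
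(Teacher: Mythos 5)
Your proof is correct, and the inequality part coincides with the paper's argument (comparing $\deg(S/(I,f))$ with $\deg(S/(I',f))$ via the height equality from Lemma~\ref{degree-initial-footprint}(a)). The interesting difference is in the inclusion $\mathcal{F}_d(I')\subseteq\mathcal{F}_d(I)$. The paper's proof is a one-line colon computation: from
$$I'\subsetneq (I'\colon f)=((I\colon J)\colon f)=(I\colon fJ)=((I\colon f)\colon J),$$
if one had $(I\colon f)=I$ the right-hand side would collapse to $(I\colon J)=I'$, contradicting the strict containment; so $(I\colon f)\neq I$. You instead establish $\mathrm{Ass}(S/I')\subseteq\mathrm{Ass}(S/I)$ by analyzing the primary decomposition $I'=\bigcap(\mathfrak q_i\colon J)$ and then invoke Theorem~\ref{zero-divisors} twice. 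Both are valid; the paper's route is shorter and purely formal, needing no primary decomposition or unmixedness of $I'$ for the inclusion (unmixedness is only needed for the height/degree comparison). Your route is heavier but makes the containment of associated primes explicit, which is sometimes a useful fact in its own right — note, though, that you must know $(\mathfrak q_i\colon J)$ is $\mathfrak p_i$-primary whenever $J\not\subseteq\mathfrak q_i$, a standard but nontrivial lemma, whereas the paper needs only the associativity of ideal quotients.
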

\begin{proof}
Let $f\in \mathcal{F}_{d}(I')$. Then $f\notin I'$ and $(I'\colon
f)\neq I'$, and since we have the following relations
$$I'\subsetneq (I'\colon f)=((I\colon J)\colon f)=(I\colon
(fJ))=((I\colon f)\colon J)$$
we deduce that $(I\colon f)\neq I$ (otherwise the last ideal
displayed above would be $I'$). Note that $I\subset I'$, so $f\notin I$.
The second statement follows from the inequality
\begin{align*}
\deg(S/I')-\delta_{I'}(d) &=\max\{\deg(S/(I',f)) \mid
f\in \mathcal{F}_{d}(I')\}\\
&\leq \max\{\deg\left(S/(I,g)\right) \mid g\in \mathcal{F}_{d}(I)\}=  \deg(S/I)-\delta_{I}(d).
\end{align*}
\quad This inequality is a consequence of the observation that if
$f\in\mathcal{F}_{d}(I')$, then ${\rm ht}(I',f)={\rm ht}(I')$, and
since $f\in\mathcal{F}_{d}(I)$ one also has ${\rm ht}(I,f)={\rm
ht}(I)$ by Lemma~\ref{degree-initial-footprint}(a).
Thus $\deg(S/(I',f))\leq\deg(S/(I,f))$.
\end{proof}

One of our main results shows that the function
$\eta\colon\mathbb{N}_+\rightarrow\mathbb{Z}$
given by
$$
\eta(d):= (\deg(S/I)-H_I(d)+1)-\delta_I(d)
$$
non-negative for Geramita ideals (see Theorem~\ref{banff-theorem-1}).

\begin{lemma}\label{lemma-july-22-29-2018}
Let $I\subset S$ be a Geramita ideal.
If $\mathcal{F}_{d_0}=\emptyset$ for
some $d_0\geq 1$, then $\eta(d_0)=0$
and $\eta(d)\geq 0$ for all $d\geq 1$.
\end{lemma}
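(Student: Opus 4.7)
The plan is to show that the hypothesis $\mathcal{F}_{d_0}=\emptyset$ is restrictive enough to force $I$ to be a linear prime ideal, which collapses $\eta$ identically to zero and yields both assertions at once.

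First I would unpack the hypothesis. Since $\mathcal{F}_{d_0}=\emptyset$, every element of $S_{d_0}\setminus I$ is regular on $S/I$. By Theorem~\ref{zero-divisors} the zero-divisors on $S/I$ form $\bigcup_{i=1}^{m}\mathfrak{p}_i$, where $\mathfrak{p}_1,\ldots,\mathfrak{p}_m$ are the associated primes of $I$, so the hypothesis gives $(\mathfrak{p}_i)_{d_0}\subset I_{d_0}$ for every $i$. Combined with $I\subset\mathfrak{p}_i$ this yields $I_{d_0}=(\mathfrak{p}_i)_{d_0}$ for each associated prime. Each $\mathfrak{p}_i$ is a linear prime of height $s-1$, so $S/\mathfrak{p}_i$ is a polynomial ring in one variable and $H_{\mathfrak{p}_i}(d_0)=1$. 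Consequently $H_I(d_0)=1$, and since $\delta_I(d_0)=\deg(S/I)$ by Definition~\ref{def:GMD}, we obtain $\eta(d_0)=\deg(S/I)-H_I(d_0)+1-\delta_I(d_0)=0$.

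For the second claim I would upgrade this to showing that $I$ is itself prime. Because no associated prime of $I$ equals $\mathfrak{m}$, we have ${\rm depth}(S/I)\geq 1$, so Theorem~\ref{hilbert-function-dim=1}(ii) applies and $H_I$ is strictly increasing until it stabilizes at $\deg(S/I)$. The equalities $H_I(0)=H_I(d_0)=1$ with $d_0\geq 1$ are compatible with this shape only if $H_I\equiv 1$, forcing $\deg(S/I)=1$. Since $I\subset\mathfrak{p}_i$ are unmixed ideals of the same height and the same degree, Lemma~\ref{tohaneanu-aug29-18}(a) yields $I=\mathfrak{p}_i$, so $I$ is prime. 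Then $S/I$ is a domain, so $(I\colon f)=I$ for every $f\in S_d\setminus I$; hence $\mathcal{F}_d=\emptyset$ and $\delta_I(d)=\deg(S/I)=1$ for all $d\geq 1$. Combined with $H_I(d)=1$ this gives $\eta(d)\equiv 0$, so in particular $\eta(d)\geq 0$ for all $d\geq 1$.

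The main subtlety is the bootstrap from the single-degree equality $I_{d_0}=(\mathfrak{p}_i)_{d_0}$ to the global statement $I=\mathfrak{p}_i$. It proceeds by coupling the strict growth behavior of $H_I$ given by Theorem~\ref{hilbert-function-dim=1} with the uniqueness of unmixed ideals of prescribed degree given by Lemma~\ref{tohaneanu-aug29-18}(a); together these rigidify those Geramita ideals for which some $\mathcal{F}_{d_0}$ is empty, pinning them down as linear primes.
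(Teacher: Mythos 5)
Your proof is correct and follows essentially the same route as the paper: use Theorem~\ref{zero-divisors} to deduce $I_{d_0}=(\mathfrak{p}_i)_{d_0}$, conclude $H_I(d_0)=1$ and $\eta(d_0)=0$, then invoke Theorem~\ref{hilbert-function-dim=1}(ii) to propagate $H_I\equiv 1$. The one place you go further than the paper is the final bootstrap: you invoke Lemma~\ref{tohaneanu-aug29-18}(a) to conclude that $I$ is actually a linear prime, hence $\mathcal{F}_d=\emptyset$ and $\eta(d)=0$ for every $d$. The paper stops at $H_I(d)=1$ and asserts $\eta(d)\geq 0$ directly (which follows since $\deg(S/I)=1$ and $\delta_I(d)\leq\deg(S/I)$ always hold, so $\eta(d)=1-\delta_I(d)\geq 0$); your extra step makes that implication fully explicit and pins down the rigidity of the situation. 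Both are valid; yours trades a small amount of extra machinery for a more transparent ending.
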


\begin{proof} Let $\mathfrak{p}_1,\ldots,\mathfrak{p}_m$ be the
associated primes of $I$. As $\mathfrak{p}_k$ is generated by
linear forms, the initial ideal of $\mathfrak{p}_k$, w.r.t the
lexicographical order $\prec$, is generated by
$s-1$ variables. Hence, as $\mathfrak{p}_k$ and ${\rm
in}_\prec(\mathfrak{p}_k)$ have the
same Hilbert function, $\deg(S/\mathfrak{p}_k)=1$ and
$H_{\mathfrak{p}_k}(d)=1$ for $d\geq 1$. Assume that
$\mathcal{F}_{d_0}=\emptyset$. Then
$\delta_{I}(d_0)=\deg(S/I)$ and $(I\colon f)=I$ for
any $f\in S_{d_0}\setminus I$. Hence, by Theorem~\ref{zero-divisors},
we get
$$
(\mathfrak{p}_1)_{d_0}\subset\left(\bigcup_{i=1}^m\mathfrak{p}_i\right)
\cap S_{d_0}
\subset I_{d_0}\subset(\mathfrak{p}_1)_{d_0}.
$$
\quad Thus $I_{d_0}=(\mathfrak{p}_1)_{d_0}$,
$H_I(d_0)=H_{\mathfrak{p}_1}(d_0)=1$, $H_I(0)=1$, and $\eta(d_0)=0$. Using
Theorem~\ref{hilbert-function-dim=1}(ii),
one has $H_I(d)=1$ for $d\geq 1$. Therefore $\eta(d)\geq 0$ for $d\geq 1$.
\end{proof}

\subsection{Singleton bound}
We come to one of our main results. The inequality in the following theorem is well known when $I$ is the vanishing ideal of a finite set of projective points \cite[p.~82]{algcodes}. In this case
the inequality is called the {\it Singleton bound\/}
\cite[Corollary~1.1.65]{tsfasman}. 

\begin{theorem}\label{banff-theorem-1}
Let $I\subset S$ be an unmixed graded ideal whose associated
primes
are generated by linear forms and such that there exists $h\in S_1$ regular on $S/I$. If
$\dim(S/I)=1$, then
$$
\delta_I(d)\leq \deg(S/I)-H_I(d)+1\ \mbox{ for }d \geq 1
$$
or equivalently $H_I(d)-1\leq {\rm hyp}_I(d)$ for $d\geq 1$.
\end{theorem}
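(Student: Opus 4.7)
The plan is to prove the inequality by strong induction on $\deg(S/I)$, using Lemma~\ref{lem:Fcomparison} as the descent tool. Set $v = {\rm v}(I)$. For $d \geq v$ the bound is immediate: Proposition~\ref{monday-morning} combined with the monotonicity of $\delta_I$ from Theorem~\ref{rth-footprint-lower-bound}(e) (which is exactly where the regularity of $h$ enters) and the positivity $\delta_I(d) \geq 1$ force $\delta_I(d) = 1 \leq \deg(S/I) - H_I(d) + 1$, since $H_I(d) \leq \deg(S/I)$ always. This also handles the base case $\deg(S/I) = 1$, since by additivity of degree a Geramita ideal of degree one must be a single linear prime, whence $H_I \equiv 1$ and $\delta_I \equiv 1$.

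For the inductive step, assume $\deg(S/I) \geq 2$ and $d < v$. I would choose an associated prime $\mathfrak p$ of $I$ realising the v-number, $v_{\mathfrak p}(I) = v$, and form $I' := (I : \mathfrak p)$. Proposition~\ref{lem:vnumber} gives $I \subsetneq I'$, so Lemma~\ref{tohaneanu-aug29-18}(b) yields $\deg(S/I') < \deg(S/I)$. A routine check confirms that $I'$ is again a Geramita ideal on which $h$ is regular: its associated primes sit among those of $I$, hence remain linear of height $s - 1$, and $h$ lies in none of them. The inductive hypothesis applied to $I'$ gives $\delta_{I'}(d) \leq \deg(S/I') - H_{I'}(d) + 1$, while Lemma~\ref{lem:Fcomparison} with $J = \mathfrak p$ supplies $\deg(S/I') - \delta_{I'}(d) \leq \deg(S/I) - \delta_I(d)$. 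Combining these,
$$\delta_I(d) \leq \deg(S/I) - \deg(S/I') + \delta_{I'}(d) \leq \deg(S/I) - H_{I'}(d) + 1.$$

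The crux—and the step I expect to be the main point of the argument—is the identification $H_{I'}(d) = H_I(d)$ in the relevant range $d < v_{\mathfrak p}(I)$. Proposition~\ref{lem:vnumber} identifies $v_{\mathfrak p}(I)$ with $\alpha(I'/I)$, forcing $(I'/I)_d = 0$, and the short exact sequence $0 \to I'/I \to S/I \to S/I' \to 0$ then yields $H_I(d) = H_{I'}(d)$. Substituting into the display above completes the induction. The principal subtlety I anticipate is the clean coordination of the two regimes $d < v$ and $d \geq v$: it is precisely the choice of $\mathfrak p$ realising the minimal v-number that synchronises the agreement $H_{I'} = H_I$ in low degrees with the onset $\delta_I = 1$ in high degrees, so that the inductive and trivial regimes meet exactly at the threshold $d = v$ and together cover every $d \geq 1$ without gap.
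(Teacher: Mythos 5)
Your proposal is correct and follows essentially the same route as the paper: induction on $\deg(S/I)$, passing to $I' = (I:\mathfrak{p})$ for a prime $\mathfrak{p}$ realizing the v-number, using Lemma~\ref{lem:Fcomparison} for the descent, and the vanishing $(I'/I)_d = 0$ for $d < v$ to identify $H_{I'}(d) = H_I(d)$ in the low range. The only cosmetic divergence is in the regime $d \geq v$, where the paper constructs the degree-$d$ witness $h^{d-v}f$ explicitly while you invoke Proposition~\ref{monday-morning} together with the monotonicity and positivity from Theorem~\ref{rth-footprint-lower-bound}; both depend on the regular linear form $h$ in the same way and both subsume the degree-one base case.
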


\begin{proof}
The proof is by induction on $\deg(S/I)$.
If $\deg(S/I)=1$, then $I=\mathfrak{p}$ is a prime generated by
linear forms, $H_I(d)=1$ for all $d\geq 0$ and
$\mathcal{F}_{d}(I)=\emptyset$ for all $d\geq 1$. The latter
follows since for any prime $\mathfrak{p}$, $(\mathfrak{p}\colon f)\neq
\mathfrak{p}$ implies $f\in \mathfrak{p}$. So the result is verified
in this case. Let $v={\rm v}(I)$ be the v-number of $I$. By
Proposition~\ref{lem:vnumber},
$v=\alpha((I\colon \mathfrak{p})/I)$ for some
 $\mathfrak{p}\in {\rm Ass}(I)$. Set $I'=(I\colon \mathfrak{p})$. The short exact sequence
$$0\longrightarrow I'/I\longrightarrow S/I \longrightarrow S/I' \longrightarrow 0$$
together with the unmixed property of $S/I$ show that
$\dim\left(I'/I\right)=1$ and ${\rm depth}(I'/I)=1$. Therefore,
$H_{I'/I}(d) =0$ for $d<\alpha((I\colon \mathfrak{p})/I)=v$ and
$H_{I'/I}(d) >0$ for $d\geq \alpha((I\colon \mathfrak{p})/I)=v$, and
consequently $H_{I'}(d)=H_{I}(d)$ for $d<v$ and $H_{I'}(d)>H_{I}(d)$
for $d\geq v$.
The last statement yields that $\deg(S/I)>\deg(S/I')$. This also
follows from Lemma~\ref{tohaneanu-aug29-18}(b).

If $d<v$ we deduce from Lemma \ref{lem:Fcomparison},
the inductive hypothesis  and $H_{I'}(d)=H_{I}(d)$ that
$$\deg(S/I)-\delta_I(d)\geq \deg(S/I')-\delta_{I'}(d)\geq H_{I'}(d)-1=H_I(d)-1,$$
which is the desired inequality. If $d\geq v$ we know that there
exists $f\in S_v$ such that $(I\colon f)=\mathfrak{p}$ and thus
$(I\colon h^{d-v}f)=\mathfrak{p}$. Therefore $\delta_{I}(d)=1$ and
since $\deg(S/I)\geq H_I(d)$ for any $d$ the desired inequality follows.
\end{proof}

The next result is known for complete intersection vanishing ideals
over finite fields \cite[Lemma~3]{sarabia7}. As an
application we extend this result to Geramita Gorenstein ideals.

\begin{corollary}\label{geramita-gorenstein} Let $I\subset S$ be a
Geramita ideal.
If $I$ is Gorenstein and $r_0={\rm reg}(S/I)\geq 2$, then
$\delta_I(r_0-1)$ is equal to $2$.
\end{corollary}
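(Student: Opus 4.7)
My strategy is to bracket $\delta_I(r_0-1)$ between $2$ and $2$ by pairing the Cayley--Bacharach analysis of Theorem~\ref{banff-july-22-29-2018} (for the lower bound) with the Singleton bound of Theorem~\ref{banff-theorem-1} and the Gorenstein symmetry of the $h$-vector (for the upper bound).

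For the lower bound, since Gorenstein rings are level by definition, Theorem~\ref{banff-july-22-29-2018} applies with equality throughout, giving ${\rm v}_{\mathfrak{p}}(I) = {\rm reg}(S/I) = r_0$ for every $\mathfrak{p} \in {\rm Ass}(I)$, and therefore ${\rm v}(I) = r_0$. Proposition~\ref{monday-morning} then yields ${\rm reg}(\delta_I) = r_0$. Using Remark~\ref{remark-reg-index}(b) (which identifies the regularity index of $\delta_I$ as the smallest $n \geq 1$ with $\delta_I(d) = 1$ for $d \geq n$) together with Theorem~\ref{rth-footprint-lower-bound}(b) and the hypothesis $r_0 \geq 2$, we conclude $\delta_I(r_0 - 1) \geq 2$.

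For the upper bound, I would compute $H_I(r_0-1)$ via the symmetry of the $h$-vector. Writing $F_I(x) = h(x)/(1-x)$, the Cohen--Macaulay property and ${\rm reg}(S/I) = r_0$ force $\deg h = r_0$. Reducing $S/I$ modulo a regular linear form (extending scalars to the algebraic closure if needed, as in Theorem~\ref{banff-july-22-29-2018}) produces an Artinian Gorenstein quotient whose Hilbert function equals the $h$-vector of $S/I$, and Gorenstein duality makes this Hilbert function symmetric, so $h_{r_0} = h_0 = 1$. Consequently,
$$H_I(r_0 - 1) \;=\; \sum_{i=0}^{r_0 - 1} h_i \;=\; h(1) - h_{r_0} \;=\; \deg(S/I) - 1.$$
Applying Theorem~\ref{banff-theorem-1} then yields $\delta_I(r_0 - 1) \leq \deg(S/I) - H_I(r_0 - 1) + 1 = 2$. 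Combining the two bounds gives the claimed equality.

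The main obstacle is securing the linear form $h \in S_1$ regular on $S/I$ that is required to invoke Theorem~\ref{banff-theorem-1}. Over infinite $K$ this is immediate from prime avoidance applied to the (finitely many) associated primes of $I$; over finite $K$ one should transport the argument through the faithfully flat extension $S \hookrightarrow \overline{S}$ employed in the proof of Theorem~\ref{banff-july-22-29-2018}, where the Hilbert function, degree, regularity, $h$-vector, and Gorenstein property are all preserved, so the numerical identity $H_I(r_0-1) = \deg(S/I) - 1$ and the Singleton inequality go through without change.
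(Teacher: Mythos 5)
Your argument follows the paper's proof essentially step for step: the lower bound $\delta_I(r_0-1)\geq 2$ from ${\rm reg}(\delta_I)=r_0$ via Proposition~\ref{monday-morning} and Theorem~\ref{banff-july-22-29-2018} (using Gorenstein $\Rightarrow$ level), and the upper bound from showing $H_I(r_0-1)=\deg(S/I)-1$ and then invoking Theorem~\ref{banff-theorem-1}. Your computation of $H_I(r_0-1)$ via symmetry of the $h$-vector ($h_{r_0}=h_0=1$) is equivalent to the paper's, which phrases the same fact as $H_A(r_0)=1$ for the Artinian Gorenstein reduction $A$ and sums the $H_A(i)$.

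One small imprecision in your final paragraph: you claim the ``Singleton inequality goes through without change'' after base change to $\overline{K}$, but the inequality of Theorem~\ref{banff-theorem-1} applied to $\overline{I}$ yields an upper bound on $\delta_{\overline{I}}(r_0-1)$, and since $\delta_I(d)\geq\delta_{\overline{I}}(d)$ (the maximum in the definition of ${\rm hyp}$ is taken over a larger set of $F$'s over $\overline{K}$), this does not automatically bound $\delta_I(r_0-1)$ from above. What actually transports is only the numerical identity $\deg(S/I)=1+H_I(r_0-1)$; one then still needs to apply Theorem~\ref{banff-theorem-1} to $I$ over $K$ itself, which requires a linear form in $S_1$ regular on $S/I$. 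The paper leaves this implicit (it is tacitly assumed, as it is for most results that cite Theorem~\ref{banff-theorem-1}), so this concern is shared rather than a flaw specific to your write-up, but your phrasing suggests the base change fully resolves it when in fact it resolves only the Hilbert-function computation.
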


\begin{proof} 
By Proposition~\ref{monday-morning} and
Theorem~\ref{banff-july-22-29-2018}, $r_0$ is the regularity index of
$\delta_I$.Thus $\delta_I(r_0-1)\geq 2$. 

We show that $\deg(S/I)=1+H_I(r_0-1)$. For this, we may assume that $K$ is infinite. 
Indeed, consider the algebraic closure $\overline{K}$ of $K$. We set
$\overline{S}=S\otimes_K \overline{K}$
and 
$ \overline{I}=I \overline{S}.$
\quad From \cite[Lemma 1.1]{Sta1}, we have $\overline{I}$ is Gorenstein, $H_I(d)=H_{\overline{I}}(d)$ for $d\geq 0$ and
$\deg(S/I)=\deg(\overline{S}/\overline{I})$. Since $\overline{K}$ is infinite, there is $h\in \overline{S}$ that is regular on $\overline{S}/\overline{I}$. Then by \cite[3.1.19]{BHer}(b) the quotient ring $A=\overline{S}/(\overline{I},h)$ is Gorenstein of dimension 0, by \cite[4.13, 4.14]{eisenbud-syzygies} it has $r_0={\rm reg}(\overline{S}/\overline{I})={\rm reg}(A)$,  and   by \cite[4.7.11(b)]{BHer} $\deg(\overline{S}/\overline{I})=\deg(A)=\sum_{i=0}^{r_0}H_A(i)$. By  \cite[proof of 4.1.10]{BHer}, $F_A(x)=(1-x)F_I(x)$ hence $H_{\overline{I}}(n)= \sum_{i=0}^n H_A(i)$ for any $n\geq 0$.
From here, using that $H_A(r_0)=1$, since $A$ is Gorenstein, we deduce that
$$\deg(S/I)=\deg(\overline{S}/\overline{I})=\deg(A)=\sum_{i=0}^{r_0}H_A(i)=1+\sum_{i=0}^{r_0-1}H_A(i)=1+H_{\overline{I}}(r_0-1)=1+H_I(r_0-1).$$
Finally,  making $d=r_0-1$ in Theorem~\ref{banff-theorem-1}, we get $\delta_I(r_0-1)\leq 2$. Thus equality holds.
\end{proof}

Note that the situation is quite different from the conclusion of Theorem \ref{banff-theorem-1} if $\dim(S/I)\geq 2$.

\begin{proposition}\label{sep10-18} Let $I\subset S$ be an unmixed graded ideal. If
$\dim(S/I)\geq 2$, then
$$
\delta_I(d)>\deg(S/I)-H_I(d)+1\ \mbox{ for some }d \geq 1.
$$
\end{proposition}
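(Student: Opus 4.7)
The plan is to exploit the fact that, in dimension at least $2$, the Hilbert function $H_I(d)$ grows without bound, while ${\rm hyp}_I(d)$ is controlled by the (fixed) degree $\deg(S/I)$. The inequality $\delta_I(d) > \deg(S/I) - H_I(d) + 1$ is equivalent to $H_I(d) - 1 > {\rm hyp}_I(d)$, so it suffices to produce a single $d$ at which the right-hand side is bounded while the left-hand side is large.

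First I would establish the uniform upper bound ${\rm hyp}_I(d) \leq \deg(S/I)$ for every $d \geq 1$. If $\mathcal{F}_{d,1} = \emptyset$, this is immediate from Definition~\ref{def:hyp}. If $\mathcal{F}_{d,1} \neq \emptyset$ and $f \in \mathcal{F}_{d,1}$, then $f \notin I$ (so $(f) \not\subset I$) and $(I\colon f)\neq I$; by Lemma~\ref{degree-initial-footprint}(a)-(b), the ideals $I \subsetneq (I,f)$ have the same height and $I$ is unmixed, hence $\deg(S/(I,f)) < \deg(S/I)$. Taking the maximum over $\mathcal{F}_{d,1}$ gives ${\rm hyp}_I(d) \leq \deg(S/I) - 1$ in that case, and in either case the uniform bound ${\rm hyp}_I(d) \leq \deg(S/I)$ holds.

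Next I would observe that since $\dim(S/I) = k \geq 2$, the Hilbert polynomial $P_I(x) \in \mathbb{Q}[x]$ has degree $k-1 \geq 1$ with positive leading coefficient (as recalled after Remark~\ref{mult-vs-h-vector-1}, $h(1) = \deg(S/I) > 0$). Hence $H_I(d) = P_I(d) \to \infty$ as $d \to \infty$, and in particular there exists $d_0 \gg 0$ with $H_I(d_0) \geq \deg(S/I) + 2$.

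Combining the two observations at such a $d_0$ yields
\[
H_I(d_0) - 1 \geq \deg(S/I) + 1 > \deg(S/I) \geq {\rm hyp}_I(d_0),
\]
which rearranges to $\delta_I(d_0) = \deg(S/I) - {\rm hyp}_I(d_0) > \deg(S/I) - H_I(d_0) + 1$, as required. The argument is essentially routine once one notices the mismatch between the constant bound on ${\rm hyp}_I$ and the unbounded growth of $H_I$; there is no real obstacle, and no hypothesis on depth or on associated primes of $I$ is needed beyond being graded and unmixed.
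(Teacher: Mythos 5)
Your proof is correct and follows essentially the same strategy as the paper: bound ${\rm hyp}_I(d)$ above by $\deg(S/I)$ uniformly, note that $H_I(d)\to\infty$ because $\dim(S/I)\geq 2$, and conclude the inequality for $d$ large. The paper organizes this by splitting on whether $\mathcal{F}_d$ is empty and invokes Theorem~\ref{hilbert-function-dim=1}(i) for the growth of $H_I$, whereas you state the uniform bound on ${\rm hyp}_I$ up front and read off the growth from the Hilbert polynomial, but the underlying idea is identical.
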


\begin{proof} Note that
$\mathfrak{m}=(t_1,\ldots,t_s)$ is not an associated prime of $I$,
that is, ${\rm depth}(S/I)\geq 1$. Assume
that $\mathcal{F}_d=\emptyset$ for some $d\geq 2$. As $H_I(0)=1$ and
$\delta_I(d)$ is equal to $\deg(S/I)$, by
Theorem~\ref{hilbert-function-dim=1}(i), one has $H_I(d)>1$ and the
inequality holds. Now assume that $\mathcal{F}_d\neq\emptyset$ for
$d\geq 2$. For each $d\geq 2$ pick $f_d\in\mathcal{F}_d$ such that
$$\delta_I(d)=\deg(S/I)-\deg(S/(I,f_d)).$$
\quad As $H_I$ is strictly increasing by
Theorem~\ref{hilbert-function-dim=1}(i), using
Lemma~\ref{degree-initial-footprint}(b), we get
$$
\deg(S/(I,f_d))<\deg(S/I)<H_I(d)-1
$$
for $d\gg 0$. Thus the required inequality holds for $d\gg 0$.
\end{proof}

\section{Reed-Muller type codes}\label{Reed-Muller-section}

In this section we give refined information on the minimum distance function for the Reed--Muller codes defined in the Introduction. The key insight is that, in the case of the projective Reed--Muller codes, this minimum distance function can be realized as a generalized minimum distance function for a finite set of points in projective space, often called evaluation points in the algebraic coding context.

\begin{theorem}{\cite[Theorem~4.5]{rth-footprint}}\label{jul11-18}
Let $\mathbb{X}$ be a finite set of points in a
projective space $\mathbb{P}^{s-1}$ over a field $K$ and let
$I(\mathbb{X})$ be its
vanishing ideal. If $d\geq 1$ and $1\leq r\leq H_\mathbb{X}(d)$, then
$$
\delta_r(C_\mathbb{X}(d))=\delta_{I(\mathbb{X})}(d,r).
$$
\end{theorem}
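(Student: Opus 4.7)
The plan is to translate the definition of the generalized Hamming weight into an algebraic expression matching $\delta_{I(\mathbb{X})}(d,r)$. First I would observe that the evaluation map $\mathrm{ev}_d\colon S_d\to K^n$ has kernel $I(\mathbb{X})_d$, so it induces a $K$-linear isomorphism $\varphi\colon S_d/I(\mathbb{X})_d\xrightarrow{\sim} C_\mathbb{X}(d)$. Under $\varphi$, $r$-dimensional subspaces $D\subset C_\mathbb{X}(d)$ correspond bijectively to $r$-dimensional subspaces of $S_d/I(\mathbb{X})_d$, i.e.\ to sets $\{\overline{f}_1,\ldots,\overline{f}_r\}$ of $K$-linearly independent classes modulo $I(\mathbb{X})$. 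Writing $F=\{f_1,\ldots,f_r\}\subset S_d$ for a lift, the complement of the support is exactly $\{i : f_j(P_i)=0\text{ for all }j\}=\{i : F\subset\mathfrak{p}_i\}$, so
$$
n-|\chi(D)|=|\{i : F\subset\mathfrak{p}_i\}|.
$$

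Next I would convert the right-hand side into a degree. Note $F\in\mathcal{F}_{d,r}$ iff $(I(\mathbb{X})\colon(F))\neq I(\mathbb{X})$ iff $F$ is contained in some associated prime $\mathfrak{p}_i$ (by Theorem~\ref{zero-divisors}), iff $F$ has a common zero among the $P_i$. In this case Lemma~\ref{jul11-15} applies to the radical unmixed ideal $I(\mathbb{X})$ and gives
$$
\deg(S/(I(\mathbb{X}),F))=\sum_{F\subset\mathfrak{p}_i}\deg(S/\mathfrak{p}_i)=|\{i : F\subset\mathfrak{p}_i\}|,
$$
where the last equality uses that each $\mathfrak{p}_i$ is generated by linear forms, so $\deg(S/\mathfrak{p}_i)=1$. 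Hence $n-|\chi(D)|=\deg(S/(I(\mathbb{X}),F))$ when $F\in\mathcal{F}_{d,r}$, while $F\notin\mathcal{F}_{d,r}$ forces $|\chi(D)|=n$, i.e.\ $n-|\chi(D)|=0$.

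Finally I would assemble the pieces. Taking the maximum over all $r$-dimensional $D\subset C_\mathbb{X}(d)$ and using $\deg(S/I(\mathbb{X}))=n$, I obtain
$$
\delta_r(C_\mathbb{X}(d))=n-\max_{D}\bigl(n-|\chi(D)|\bigr)=\deg(S/I(\mathbb{X}))-\max_{F\in\mathcal{F}_{d,r}}\deg(S/(I(\mathbb{X}),F))=\delta_{I(\mathbb{X})}(d,r),
$$
provided $\mathcal{F}_{d,r}\neq\emptyset$. For the degenerate case $\mathcal{F}_{d,r}=\emptyset$, every $r$-dimensional $D$ has full support by the previous paragraph, so $\delta_r(C_\mathbb{X}(d))=n=\delta_{I(\mathbb{X})}(d,r)$ by the definitions. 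The main obstacle is not technical but bookkeeping: keeping the dictionary between subspaces of the code, classes in $S_d/I(\mathbb{X})_d$, and sets of common zeros straight, and verifying the edge case $\mathcal{F}_{d,r}=\emptyset$ on both sides. The algebraic engine is Lemma~\ref{jul11-15}, which is precisely what converts ``number of points of $\mathbb{X}$ in $V(F)$'' into a degree computation.
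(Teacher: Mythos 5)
Your argument is correct, and since the paper cites this as~\cite[Theorem~4.5]{rth-footprint} without reproving it there is no internal proof to compare against; the dictionary you set up (subcode $\leftrightarrow$ $r$-dimensional subspace of $S_d/I(\mathbb{X})_d$, complement of support $\leftrightarrow$ common zeros $\leftrightarrow$ associated primes containing $F$, cardinality converted to degree via Lemma~\ref{jul11-15} and $\deg(S/\mathfrak{p}_i)=1$) is precisely the one the introduction sketches and that the cited reference uses. One minor point worth making explicit: the quantity $\deg(S/(I(\mathbb{X}),F))$ depends only on the span of $\overline{f}_1,\ldots,\overline{f}_r$ and not on the chosen lift $F$ (since $I\subset\mathfrak{p}_i$, replacing each $f_j$ by $f_j+h_j$ with $h_j\in I$ does not change which $\mathfrak{p}_i$ contain $F$), so the correspondence between subcodes $D$ and elements of $\mathcal{F}_{d,r}$ is indeed well-defined on both sides of the max.
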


By Theorem \ref{jul13-18} (a) and \cite[Theorem 1, Corollary 1]{wei}, the entries of each row of the weight matrix
$(\delta_{\mathbb{X}}(d,r))$ form an increasing sequence until they
stabilize. We show in Theorem \ref{summer-18} below that the entries of
each column of the weight matrix $(\delta_{\mathbb{X}}(d,r))$ form a decreasing sequence.

Before we can prove this result we need an additional lemma.
Recall that the {\it support\/} $\chi(\beta)$ of a vector $\beta\in K^m$
is $\chi(K\beta)$, that is, $\chi(\beta)$ is the set of non-zero
entries of $\beta$.

\begin{lemma}\label{seminar} Let $D$ be a subcode of $C$ of dimension $r\geq 1$. If
$\beta_1,\ldots,\beta_r$ is a $K$-basis for $D$ with
$\beta_i=(\beta_{i,1},\ldots,\beta_{i,m})$ for $i=1,\ldots,r$, then
$\chi(D)=\cup_{i=1}^r\chi(\beta_i)$ and the number of elements of
$\chi(D)$ is the number of non-zero columns of the matrix:
$$
\left[\begin{matrix}
\beta_{1,1}&\cdots&\beta_{1,i}&\cdots&\beta_{1,m}\\
\beta_{2,1}&\cdots&\beta_{2,i}&\cdots&\beta_{2,m}\\
\vdots&\cdots&\vdots&\cdots&\vdots\\
\beta_{r,1}&\cdots&\beta_{r,i}&\cdots&\beta_{r,m}
\end{matrix}\right].
$$
\end{lemma}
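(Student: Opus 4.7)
The plan is to establish both conclusions at once via a direct computation using the expansion in the given basis. First I would prove the set equality $\chi(D)=\bigcup_{i=1}^r\chi(\beta_i)$ by showing two inclusions. The inclusion $\bigcup_{i=1}^r\chi(\beta_i)\subseteq \chi(D)$ is immediate from the definition of $\chi(D)$: each $\beta_i$ lies in $D$, so any coordinate position $j$ where $\beta_{i,j}\neq 0$ is already witnessed in the defining condition of $\chi(D)$ by the vector $\beta_i$ itself.

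For the reverse inclusion $\chi(D)\subseteq \bigcup_{i=1}^r\chi(\beta_i)$, I would pick $j\in \chi(D)$ and choose a vector $v=(v_1,\ldots,v_m)\in D$ with $v_j\neq 0$. Since $\beta_1,\ldots,\beta_r$ is a basis for $D$, there exist scalars $c_1,\ldots,c_r\in K$ with $v=\sum_{i=1}^r c_i\beta_i$. Reading off the $j$-th coordinate gives $v_j=\sum_{i=1}^r c_i \beta_{i,j}\neq 0$, which forces at least one $\beta_{i,j}$ to be non-zero. That index $i$ then satisfies $j\in \chi(\beta_i)$, proving the inclusion.

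The second conclusion is a direct consequence. The $j$-th column of the displayed matrix is the vector $(\beta_{1,j},\ldots,\beta_{r,j})^{\!\top}\in K^r$, which vanishes precisely when $\beta_{i,j}=0$ for every $i$, that is, precisely when $j\notin \bigcup_{i=1}^r\chi(\beta_i)$. By the set equality just established, this is equivalent to $j\notin \chi(D)$. Hence the non-zero columns are indexed exactly by $\chi(D)$, and the count matches.

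I do not anticipate a genuine obstacle here: the lemma is essentially an unpacking of the definition of the support $\chi(D)$ in terms of an arbitrary basis, and the only point requiring a line of argument is the observation that a $K$-linear combination can have a non-zero entry at position $j$ only if at least one summand does. All other steps are formal.
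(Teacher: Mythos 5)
Your argument is correct and complete. The paper itself gives no proof of Lemma~\ref{seminar}, treating it as an immediate consequence of the definition of support; your write-up is exactly the direct unwinding one would supply (the forward inclusion trivial from $\beta_i\in D$, the reverse inclusion by expanding an arbitrary $v\in D$ in the basis, and the column characterization following formally), so there is nothing in the paper to compare against beyond agreeing that the statement is routine.
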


\begin{theorem}\label{summer-18} Let $\mathbb{X}$ be a finite
set of points in $\mathbb{P}^{s-1}$, let $I=I(\mathbb{X})$ be its vanishing ideal, and let
$1\leq r\leq |\mathbb{X}|$ be a fixed integer. Then there is an integer $d_0\geq 1$ such that
$$
\delta_I(1,r)>\delta_I(2,r)>\cdots> \delta_I(d_0,r)=\delta_I(d,r)=r\ \mbox{ for }\
d\geq d_0.
$$
\end{theorem}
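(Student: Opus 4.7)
The plan is to use Theorem \ref{jul11-18} to identify $\delta_I(d,r)$ with the $r$-th generalized Hamming weight $\delta_r(C_\mathbb{X}(d))$ and then verify three ingredients: (a) $\delta_I(d,r)\geq r$ for every $d\geq 1$; (b) $\delta_I(d,r)=r$ for all $d$ sufficiently large; and (c) $\delta_I(d+1,r)<\delta_I(d,r)$ whenever $\delta_I(d,r)>r$. Combined with the monotonicity $\delta_I(d+1,r)\leq \delta_I(d,r)$ from Theorem \ref{rth-footprint-lower-bound}(e), these three ingredients force the chain $\delta_I(1,r)>\delta_I(2,r)>\cdots>\delta_I(d_0,r)=r$, with $d_0$ the first index at which the value $r$ is reached.

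Claim (a) splits into cases. If $r\leq H_\mathbb{X}(d)$, then $\dim_K C_\mathbb{X}(d)\geq r$ and Theorem \ref{jul13-18}(b) gives $\delta_r(C_\mathbb{X}(d))\geq r$; otherwise $\mathcal{F}_{d,r}=\emptyset$, so by definition $\delta_I(d,r)=\deg(S/I)=|\mathbb{X}|\geq r$. Claim (b) is immediate once $d\geq {\rm reg}(S/I)$: the evaluation map is surjective in that range, $C_\mathbb{X}(d)=K^{|\mathbb{X}|}$, and the $r$-th generalized Hamming weight of the full ambient code is $r$, attained by the span of any $r$ standard basis vectors.

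Claim (c) is the substantive part, which I would translate into the equivalent inequality ${\rm hyp}_I(d+1,r)>{\rm hyp}_I(d,r)$ whenever ${\rm hyp}_I(d,r)<|\mathbb{X}|-r$. Fix $F=\{f_1,\ldots,f_r\}\in\mathcal{F}_{d,r}$ attaining ${\rm hyp}_I(d,r)$; Lemma \ref{jul11-15} identifies $\deg(S/(I,F))=|A|$, where $A=\{P\in\mathbb{X} : f_1(P)=\cdots=f_r(P)=0\}$, and the hypothesis becomes $|B|=|\mathbb{X}\setminus A|=\delta_I(d,r)>r$. The first attempt is to take $G=hF=\{hf_1,\ldots,hf_r\}$ for a linear form $h\in S_1$ vanishing at some $P^*\in B$: then $V(G)\cap\mathbb{X}\supseteq A\cup\{P^*\}$, so as soon as $G\in\mathcal{F}_{d+1,r}$, a second application of Lemma \ref{jul11-15} yields $\deg(S/(I,G))\geq|A|+1$, giving the required strict jump.

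The main obstacle is ensuring $G\in\mathcal{F}_{d+1,r}$, i.e.~that $h\overline{f_1},\ldots,h\overline{f_r}$ remain $K$-linearly independent modulo $I$. Since $(I:h)$ equals the vanishing ideal of $\mathbb{X}\setminus V(h)$, this amounts to demanding that $V(h)$ contain none of the nonempty sets $B_c=\{P\in B:\sum c_if_i(P)\neq 0\}$. The $|B|$ covectors $L_P:c\mapsto \sum c_if_i(P)$ for $P\in B$ span $(K^r)^*$, and because $|B|>r$ one of them is redundant, so there is $P^*\in B$ such that no $B_c$ equals $\{P^*\}$; any $h$ vanishing at $P^*$ but at no other point of $\mathbb{X}$ then succeeds, and its existence is automatic over an infinite field by prime avoidance. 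Over a small finite field one must work harder: one may allow $h$ to vanish at a larger subset of $\mathbb{X}\setminus T$ for a chosen basis $T$ of $\{L_P\}$, or replace $hF$ by an $r$-tuple drawn from $(I(A\cup\{P^*\})/I)_{d+1}$ whose dimension is estimated via the short exact sequence relating the Hilbert functions of $\mathbb{X}$, $A$, and $A\cup\{P^*\}$.
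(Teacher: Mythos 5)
Your overall strategy (reduce to codes via Theorem~\ref{jul11-18}, then verify a lower bound $\delta_I(d,r)\geq r$, eventual equality, and strict decrease while above $r$) is the same high-level plan as the paper's proof, and your claims (a) and (b) are correct. The divergence---and the gap---is in claim (c). You try to produce a witness in $\mathcal{F}_{d+1,r}$ of the form $hF=\{hf_1,\ldots,hf_r\}$ for a \emph{single} linear form $h$, which forces you to thread $h$ through two conflicting constraints at once: it must vanish somewhere on $B=\mathbb{X}\setminus A$, yet its zero set must avoid all the sets $B_c$. You correctly reduce this to finding $h$ with $V(h)\cap T=\emptyset$ and $V(h)\cap B\neq\emptyset$ for a basis $T\subset B$ of the covectors $L_P$, but over a finite field $\mathbb{F}_q$ with $q\leq r$ such a linear form need not exist: the forms vanishing at some fixed $P^*\in B\setminus T$ form a $q^{s-1}$-element space, of which up to $rq^{s-2}$ may also vanish somewhere on $T$, and the count can exhaust the space. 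You acknowledge this and offer two repairs, but neither is closed: repair (1) has exactly the same $q>r$ obstruction, and repair (2) only yields $\dim_K\bigl(I(A\cup\{P^*\})_{d+1}/I_{d+1}\bigr)\geq r-1$ from the obvious short exact sequence, not the needed $\geq r$. You also lean on Theorem~\ref{rth-footprint-lower-bound}(e) for weak monotonicity, which requires a linear form regular on $S/I$; this too can fail over a small finite field (e.g.\ $\mathbb{X}=\mathbb{P}^{s-1}(\mathbb{F}_q)$), whereas the theorem you are proving carries no such hypothesis.

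The paper's proof sidesteps all of this by not insisting on a single $h$. After identifying $\delta_I(d,r)$ with $|\chi(D)|$ for a minimizing subcode $D$, it row-reduces the $r\times|\mathbb{X}|$ matrix of evaluations and permutes columns so the first $r$ columns are diagonal with pivots at points $Q_1,\ldots,Q_r$; linear independence mod $I$ is then read off from those $r$ pivot entries alone. It then multiplies the $i$-th row generator $g_i$ by its \emph{own} linear form $h_i$ chosen so that $h_i(Q_i)\neq 0$ (preserving the $i$-th pivot, hence rank $r$) and $h_i(Q_k)=0$ for a single nonpivot point $Q_k$ in the support. Producing a linear form that is nonzero at one given point and zero at one other given point is always possible, over any field (this is \cite[Lemma~2.14(ii)]{hilbert-min-dis}); no prime avoidance, no surviving regular element, no field-size hypothesis. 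The new $r$-tuple $\{h_1g_1,\ldots,h_rg_r\}$ lies in $\mathcal{F}_{d+1,r}$ (it is not of the form $hF$, and the underlying ideal changes, but that is irrelevant) and kills the $Q_k$-column, so the support strictly drops. The same device with $Q_k$ omitted handles the stabilization case $\delta_I(d,r)=r\Rightarrow\delta_I(d+1,r)=r$, which replaces your appeal to Theorem~\ref{rth-footprint-lower-bound}(e). In short: the decisive idea you are missing is to diagonalize first and then use $r$ independent linear forms, one per pivot, so that each one only ever has to separate two points.
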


\begin{proof} Let $[P_1],\ldots,[P_m]$ be the points of $\mathbb{X}$.
By Theorem~\ref{jul11-18} there exists a linear subcode $D$ of $C_\mathbb{X}(d)$ of
dimension $r$ such that $\delta_I(d,r)=\delta_\mathbb{X}(d,r)=|\chi(D)|$. Pick a
$K$-basis $\beta_1,\ldots,\beta_r$ of $D$. Each $\beta_i$ can be
written as
$$
\beta_i=(\beta_{i,1},\ldots,\beta_{i,k},\ldots,\beta_{i,m})=
(f_i(P_1),\ldots,f_i(P_k),\ldots,f_i(P_m))
$$
for some $f_i\in S_d$. Consider the matrix $B$ whose rows are
$\beta_1,\ldots,\beta_m$:
$$
B=\left[\begin{matrix}
f_1(P_1)&\cdots&f_1(P_k)&\cdots&f_1(P_m)\\
f_2(P_1)&\cdots&f_2(P_k)&\cdots&f_2(P_m)\\
\vdots&\cdots&\vdots&\cdots&\vdots\\
f_r(P_1)&\cdots&f_r(P_k)&\cdots&f_r(P_m)
\end{matrix}\right].
$$
\quad As $B$ has rank $r$, by permuting columns and applying elementary
row operations, the matrix $B$ can be brought to the form:
$$
B'=\left[\begin{matrix}
g_1(Q_1)& & & &g_1(Q_{r+1})&\cdots&g_1(Q_m)\\
&g_2(Q_2)& &\mathbf{0}\ &g_2(Q_{r+1})&\cdots&g_2(Q_m)\\
\mathbf{0} & &\ddots & &\vdots\\
 & & &g_r(Q_r)&g_r(Q_{r+1})&\cdots& g_r(Q_m)
\end{matrix}\right],
$$
where $g_1,\ldots,g_r$ are linearly independent polynomials over the
field $K$
modulo $I$ of degree $d$, $Q_1,\ldots,Q_m$ are a permutation of
$P_1,\ldots,P_m$, the
first $r$ columns of $B'$ form a diagonal matrix
such that $g_i(Q_i)\neq 0$ for $i=1,\ldots,r$, and the ideals
$(f_1,\ldots,f_r)$ and $(g_1,\ldots,g_r)$ are equal. Let
$D'$ be the linear space generated by the rows of $B'$. The operations
applied to $B$ did not affect the size of the support of $D$
(Lemma~\ref{seminar}), that is, $|\chi(D)|=|\chi(D')|$.

Note that $\delta_r(C_\mathbb{X}(d))$
depends only on $\mathbb{X}$, that is, $\delta_r(C_\mathbb{X}(d))$ is
independent of how we order the points in $\mathbb{X}$ (cf.
Theorem~\ref{jul11-18}). Let ${\rm ev}_{d}'\colon S_d\rightarrow K^m$
be the evaluation map,
$f\mapsto (f(Q_1),\ldots,f(Q_m))$,
relative to the points $[Q_1],\ldots,[Q_m]$.
By Theorem~\ref{jul13-18}, $\delta_\mathbb{X}(d,r)\geq r$.

First we assume that
$\delta_\mathbb{X}(d,r)=r$ for some $d\geq 1$ and $r\geq 1$.
Then the $i$-th column of $B'$ is zero for $i>r$. For each $1\leq
i\leq r$ pick $h_i\in S_1$ such that $h_i(Q_i)\neq 0$. The
polynomials $h_1g_1,\ldots,h_rg_r$ are linearly independent
modulo $I$ because $(h_ig_i)(Q_j)$ is not $0$ if $i=j$ and is $0$
if $i\neq j$. The image
of $Kh_1g_1\oplus\cdots\oplus Kh_rg_r$, under the map ${\rm
ev}'_{d+1}$, is a subcode $D''$ of $C_\mathbb{X}(d+1)$ of dimension $r$ and
$|\chi(D'')|=r$. Thus $\delta_\mathbb{X}(d+1,r)\leq r$, and
consequently $\delta_\mathbb{X}(d+1,r)=r$.

Next we assume that
$\delta_\mathbb{X}(d,r)>r$. Then $B'$ has a nonzero column
$(g_1(Q_k),\ldots,g_r(Q_k))^\top$ for some $k>r$. It suffices to show that
$\delta_\mathbb{X}(d,r)>\delta_\mathbb{X}(d+1,r)$.  According to
\cite[Lemma~2.14(ii)]{hilbert-min-dis} for each $1\leq i\leq r$ there
is $h_i$ in $S_1$ such that $h_i(Q_i)\neq 0$ and $h_i(Q_k)=0$.
Let $B''$ be the matrix:
$$
B''=\left[\begin{matrix}
h_1g_1(Q_1)& & & &h_1g_1(Q_{r+1})&\cdots&h_1g_1(Q_m)\\
&h_2g_2(Q_2)& &\mathbf{0}\ &h_2g_2(Q_{r+1})&\cdots&h_2g_2(Q_m)\\
\mathbf{0} & &\ddots & &\vdots\\
 & & &h_rg_r(Q_r)&h_rg_r(Q_{r+1})&\cdots& h_rg_r(Q_m)
\end{matrix}\right].
$$
\quad The image
of $Kh_1g_1\oplus\cdots\oplus Kh_rg_r$, under the map ${\rm
ev}'_{d+1}$, is a subcode $V$ of $C_\mathbb{X}(d+1)$ of
dimension $r$ because the rank of $B''$ is $r$, and since the $k$-column of
$B''$ is zero, we get
$$
\delta_\mathbb{X}(d,r)=|\chi(D)|=|\chi(D')|>|\chi(V)|\geq
\delta_\mathbb{X}(d+1,r).
$$
\quad Thus $\delta_\mathbb{X}(d,r)>\delta_\mathbb{X}(d+1,r)$.
\end{proof}

\begin{corollary}\label{sept1-18} Let $\mathbb{X}$ be a finite
set of points in $\mathbb{P}^{s-1}$ and let $I=I(\mathbb{X})$ be its
vanishing ideal. If $I$ is a complete intersection, then
$\delta_I(d)\geq {\rm reg}(S/I)-d+1$ for $1\leq d<{\rm reg}(S/I)$.
\end{corollary}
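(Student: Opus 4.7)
The plan is to combine three ingredients that have already been established in the paper: (i) a complete intersection in a polynomial ring is Gorenstein, hence level; (ii) Theorem~\ref{banff-july-22-29-2018} then forces the regularity index of $\delta_I$ to coincide with $r_0:={\rm reg}(S/I)$; and (iii) the strict monotonicity result Theorem~\ref{summer-18}, applied with $r=1$, then forces the values $\delta_I(1),\ldots,\delta_I(r_0)$ to be a strictly decreasing sequence of positive integers ending at $1$, from which the desired linear lower bound is immediate.

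In more detail, first observe that $I=I(\mathbb{X})$ is a Geramita ideal: since $\mathbb{X}$ is a set of reduced projective points, $I$ is the intersection of the ideals of those points, each of which is a prime generated by linear forms. The complete intersection hypothesis gives that $S/I$ is Cohen--Macaulay of dimension $1$ with a Koszul resolution whose unique top Betti number equals $1$, so $S/I$ is Gorenstein and in particular level in the sense of Definition~\ref{regularity-socle-degree}.

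Next, apply Theorem~\ref{banff-july-22-29-2018} to the level Geramita ideal $I$ to conclude ${\rm s}(I)={\rm v}_\mathfrak{p}(I)={\rm reg}(S/I)=r_0$ for every $\mathfrak{p}\in{\rm Ass}(I)$. By Definition~\ref{cayley-bacharach-def}, this means $I$ is Cayley--Bacharach, and Corollary~\ref{oct11-18} then gives
\[
{\rm reg}(\delta_I)={\rm v}(I)=r_0.
\]
In particular $\delta_I(r_0)=1$ while $\delta_I(d)\geq 2$ for $1\leq d<r_0$.

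Finally, apply Theorem~\ref{summer-18} with $r=1$ to $\mathbb{X}$: there is an integer $d_0\geq 1$ such that
\[
\delta_I(1)>\delta_I(2)>\cdots>\delta_I(d_0)=1,
\]
and by the previous paragraph the stabilization index $d_0$ equals $r_0$. Since $\delta_I(r_0)=1$, $\delta_I(r_0-1)\geq 2$, $\delta_I(r_0-2)\geq 3$, and so on, a trivial backward induction on $d$ gives $\delta_I(d)\geq r_0-d+1$ for every $1\leq d\leq r_0$, which is the claimed bound in the range $1\leq d<r_0$. No step here presents a real obstacle; essentially all the nontrivial work has already been absorbed into Theorems~\ref{banff-july-22-29-2018} and~\ref{summer-18}, and the only thing one has to check carefully is that the Gorenstein (and hence level) hypothesis is really in force so that Theorem~\ref{banff-july-22-29-2018} delivers equality, not just inequality, between ${\rm v}_\mathfrak{p}(I)$ and ${\rm reg}(S/I)$.
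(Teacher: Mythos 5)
Your proposal is correct and follows essentially the same route as the paper: the paper's proof invokes Theorem~\ref{banff-july-22-29-2018} (implicitly using that a complete intersection is Gorenstein, hence level) to get ${\rm v}(I)={\rm reg}(S/I)$, and then appeals to Theorem~\ref{summer-18} with $r=1$ to derive the linear lower bound by strict decrease to $1$. You have merely made explicit the intermediate steps (why $I$ is Geramita, why $S/I$ is level, the passage through Corollary~\ref{oct11-18}, and the backward induction) that the paper leaves tacit.
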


\begin{proof} If $r_0$ denotes the regularity of $S/I$,
by Theorem~\ref{banff-july-22-29-2018}, one has
${\rm v(I)}=r_0$. Thus $\delta_I(r_0-1)\geq 2$ and the result follows
from Theorem~\ref{summer-18} by setting $r=1$.
\end{proof}

\begin{corollary}{\cite[Theorem~12]{camps-sarabia-sarmiento-vila}}
If $\mathbb{X}$ is a set parameterized by monomials lying on a
projective torus and $1\leq r\leq|\mathbb{X}|$ be a fixed integer,
then there is an integer $d_0\geq 1$ such that
$$
\delta_r(C_\mathbb{X}(1))>\delta_r(C_\mathbb{X}(2))>\cdots>
\delta_r(C_\mathbb{X}(d_0))=\delta_r(C_\mathbb{X}(d))=r\ \mbox{ for }\
d\geq d_0.
$$
\end{corollary}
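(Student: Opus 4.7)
The plan is to derive this corollary as a direct translation of Theorem~\ref{summer-18} into the language of generalized Hamming weights of Reed--Muller-type codes, using Theorem~\ref{jul11-18} as the bridge. The hypothesis that $\mathbb{X}$ is parameterized by monomials lying on a projective torus is only used to guarantee that $\mathbb{X}$ is a finite set of points in $\mathbb{P}^{s-1}$, and plays no further role in the argument; in particular, the general statement of Theorem~\ref{summer-18} subsumes the special (torus) case treated in \cite{camps-sarabia-sarmiento-vila}.

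The first step is to set $I := I(\mathbb{X})$, the vanishing ideal of $\mathbb{X}$, and to note that since $\mathbb{X}$ is a finite set of reduced projective points, $I$ is a radical unmixed graded ideal of dimension $1$ whose associated primes are generated by linear forms, so all the hypotheses used in Section~\ref{Reed-Muller-section} apply. The second step is to invoke Theorem~\ref{jul11-18}, which identifies $\delta_r(C_\mathbb{X}(d)) = \delta_{I}(d,r)$ whenever $1\leq r\leq H_\mathbb{X}(d)$. Since the Hilbert function of $S/I$ stabilizes at $\deg(S/I)=|\mathbb{X}|$ for $d$ large by Theorem~\ref{hilbert-function-dim=1}(ii), for all sufficiently large $d$ we have $r\leq |\mathbb{X}|=H_\mathbb{X}(d)$, so the identification applies at least from some point onward; for smaller $d$ where $r>H_\mathbb{X}(d)$ the set $\mathcal{F}_{d,r}$ is empty and $\delta_I(d,r)$ equals $\deg(S/I)=|\mathbb{X}|$, which is consistent with and larger than subsequent values.

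The third and decisive step is to apply Theorem~\ref{summer-18} directly to $I$. That theorem produces an integer $d_0\geq 1$ for which
$$
\delta_I(1,r)>\delta_I(2,r)>\cdots>\delta_I(d_0,r)=\delta_I(d,r)=r \quad \text{for } d\geq d_0.
$$
Substituting $\delta_I(d,r)=\delta_r(C_\mathbb{X}(d))$ via Theorem~\ref{jul11-18} yields the claimed chain of strict inequalities and stabilization in terms of the generalized Hamming weights of the projective Reed--Muller-type code $C_\mathbb{X}(d)$.

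Since everything reduces to Theorems~\ref{jul11-18} and \ref{summer-18}, which have already been established, there is no real obstacle here; the proof is essentially a one-line deduction. If any care is needed, it is only in the bookkeeping between the two regimes $r\leq H_\mathbb{X}(d)$ and $r>H_\mathbb{X}(d)$ for the initial values of $d$, and in emphasizing that the ``parameterized by monomials lying on a projective torus'' hypothesis in the corollary is unnecessary for the conclusion---a remark worth making explicit so that the result is seen as a strict generalization of \cite[Theorem~12]{camps-sarabia-sarmiento-vila}.
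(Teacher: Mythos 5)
Your proof is correct and takes the same route as the paper: the corollary is derived by applying Theorem~\ref{summer-18} to $I=I(\mathbb{X})$ and translating via Theorem~\ref{jul11-18}. Your additional remarks that the torus hypothesis is superfluous and that the small-$d$ regime (where $r>H_\mathbb{X}(d)$) causes no trouble are accurate and helpful but not strictly necessary.
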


\begin{proof} It follows at once from Theorems~\ref{jul11-18} and
\ref{summer-18}.
\end{proof}

\begin{corollary}\label{min-dis-v-number} Let $\mathbb{X}$ be a finite set of points of
$\mathbb{P}^{s-1}$ and let $\delta_\mathbb{X}(d)$ be the minimum
distance of $C_\mathbb{X}(d)$. Then $ \delta_\mathbb{X}(d)=1$ if and
only if $d\geq {\rm v}(I)$.
\end{corollary}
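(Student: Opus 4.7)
The plan is to derive this corollary directly from results already proved in the paper, essentially by recognizing that $\delta_{\mathbb{X}}(d)$ coincides with the minimum distance function $\delta_{I(\mathbb{X})}(d)$ and then applying the characterization of ${\rm reg}(\delta_I)$ in terms of the v-number.

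First I would invoke Theorem~\ref{jul11-18} with $r=1$ to identify $\delta_{\mathbb{X}}(d) = \delta_{I(\mathbb{X})}(d,1) = \delta_{I}(d)$, where $I = I(\mathbb{X})$. Since $\mathbb{X}$ is a finite set of reduced projective points, the ideal $I$ is a radical unmixed graded ideal of Krull dimension $1$ whose associated primes are the vanishing ideals of the individual points, each of which is generated by linear forms. Thus $I$ is a Geramita ideal (and in particular satisfies all the hypotheses required to apply Proposition~\ref{monday-morning}).

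Next I would apply Proposition~\ref{monday-morning} to conclude ${\rm reg}(\delta_I) = {\rm v}(I)$. By Remark~\ref{remark-reg-index}(b), ${\rm reg}(\delta_I)$ is the smallest integer $n \geq 1$ such that $\delta_I(d) = 1$ for every $d \geq n$. So the implication $d \geq {\rm v}(I) \Rightarrow \delta_{\mathbb{X}}(d) = 1$ is immediate. For the converse, one can either appeal to Corollary~\ref{md-decreasing}, which states that $\delta_I$ is strictly decreasing on $\{1,\dots,{\rm v}(I)\}$ and equals $1$ for the first time at $d = {\rm v}(I)$, or simply use the definition of the regularity index: if $d < {\rm v}(I) = {\rm reg}(\delta_I)$, then by minimality $\delta_I(d) \neq 1$, and since $\delta_I(d) \geq 1$ always (Theorem~\ref{rth-footprint-lower-bound}(b)), one has $\delta_{\mathbb{X}}(d) = \delta_I(d) \geq 2$.

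There is no real obstacle here; the corollary is a packaging statement that combines three ingredients: the coding-theoretic interpretation of $\delta_I(d,1)$ from Theorem~\ref{jul11-18}, the identification of ${\rm reg}(\delta_I)$ with the v-number from Proposition~\ref{monday-morning}, and the elementary observation that the regularity index records exactly when the minimum distance function first stabilizes at $1$. The only verification required is that $I(\mathbb{X})$ meets the hypotheses of Proposition~\ref{monday-morning}, which is immediate from $\mathbb{X}$ being a finite set of reduced points in projective space.
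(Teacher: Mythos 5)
Your proof is correct and follows essentially the same route as the paper's one-line proof, which cites Theorem~\ref{jul11-18}, Proposition~\ref{monday-morning}, and Theorem~\ref{summer-18}. The only variation is that where the paper invokes the strict monotonicity result Theorem~\ref{summer-18} to see that $\delta_I(d)=1$ precisely for $d\geq{\rm reg}(\delta_I)$, you reach the same conclusion via the lighter-weight Remark~\ref{remark-reg-index}(b) together with the minimality in the definition of ${\rm reg}(\delta_I)$ and the lower bound $\delta_I(d)\geq 1$ from Theorem~\ref{rth-footprint-lower-bound}(b); both choices serve the identical purpose.
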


\begin{proof} It follows from Proposition~\ref{monday-morning}, and
Theorems~~\ref{jul11-18} and \ref{summer-18}.
\end{proof}

\begin{corollary}\label{jul15-18-coro} If $\mathbb{X}$ is a finite
set of $\mathbb{P}^{s-1}$ over a field $K$, then
$\delta_\mathbb{X}(d,H_\mathbb{X}(d))=|\mathbb{X}|$ for $d \geq 1$.
\end{corollary}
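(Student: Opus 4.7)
The plan is to combine two previously established results: Theorem~\ref{jul11-18}, which identifies the coding-theoretic generalized minimum distance $\delta_\mathbb{X}(d,r)$ with the algebraic GMD function $\delta_{I(\mathbb{X})}(d,r)$ in the range $1\leq r\leq H_\mathbb{X}(d)$, and Proposition~\ref{jul15-18}, which computes $\delta_I(d,H_I(d))=\deg(S/I)$ for any unmixed graded ideal $I$ with $\dim(S/I)\geq 1$.

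First I would verify that $I=I(\mathbb{X})$ satisfies the hypotheses of Proposition~\ref{jul15-18}. Writing $\mathbb{X}=\{[P_1],\ldots,[P_m]\}$ and $\mathfrak{p}_i$ for the vanishing ideal of $[P_i]$, we have $I(\mathbb{X})=\mathfrak{p}_1\cap\cdots\cap\mathfrak{p}_m$, where each $\mathfrak{p}_i$ is a prime of height $s-1$ generated by linear forms. Thus $I(\mathbb{X})$ is radical and unmixed, and $\dim(S/I(\mathbb{X}))=1$, so Proposition~\ref{jul15-18} applies. Moreover, $\deg(S/I(\mathbb{X}))=|\mathbb{X}|$ (as recalled in the introduction) and $H_\mathbb{X}(d)=H_{I(\mathbb{X})}(d)$ by the definition of the Hilbert function of $\mathbb{X}$.

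Now fix $d\geq 1$ and set $r=H_\mathbb{X}(d)=H_{I(\mathbb{X})}(d)$. Since $r$ lies in the permissible range $1\leq r\leq H_\mathbb{X}(d)$, Theorem~\ref{jul11-18} gives
\[
\delta_\mathbb{X}(d,H_\mathbb{X}(d))=\delta_{I(\mathbb{X})}(d,H_{I(\mathbb{X})}(d)).
\]
Applying Proposition~\ref{jul15-18} to $I(\mathbb{X})$ then yields
\[
\delta_{I(\mathbb{X})}(d,H_{I(\mathbb{X})}(d))=\deg(S/I(\mathbb{X}))=|\mathbb{X}|,
\]
which establishes the claim.

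Since the proof is a direct concatenation of two earlier results, there is no serious obstacle; the only point that deserves care is checking the applicability of Theorem~\ref{jul11-18}, which requires $r\leq H_\mathbb{X}(d)$. The corner case $r=H_\mathbb{X}(d)$ is precisely the boundary of that range, and that is the one we need.
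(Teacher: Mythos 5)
Your proof is correct and follows exactly the route the paper takes, namely combining Theorem~\ref{jul11-18} with Proposition~\ref{jul15-18}; the paper simply states "It follows at once" from these two results, whereas you spell out the verification of hypotheses. Nothing further is needed.
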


\begin{proof} It follows at once from Proposition~\ref{jul15-18} and
Theorem~\ref{jul11-18}.
\end{proof}

\section{Complete intersections}\label{ci-section}
In this section we examine minimum distance functions of complete
intersection ideals. 

\begin{definition}\label{ci-def} An ideal $I\subset S$ is called a {\it complete intersection\/} if
there exist $g_1,\ldots,g_{r}$ in $S $ such that $I=(g_1,\ldots,g_{r})$,
where $r={\rm ht}(I)$ is the height of $I$.
\end{definition}

There are a number of interesting open problems regarding the minimum distance of complete intersection functions. We discuss one such problem in Conjecture \ref{footprint-ci-general} below and relate this problem to \cite[Conjecture~CB12]{Eisenbud-Green-Harris}) in the second part of this section.

\begin{conjecture}{\cite{min-dis-ci}}\label{footprint-ci-general}
Let $I\subset S:=K[t_1, \ldots, t_s]$ be a complete intersection graded ideal of dimension
$1$ generated by forms $f_1,\ldots,f_c$, $c=s-1$, with
$d_i=\deg(f_i)$ and $2\leq d_i\leq d_{i+1}$ for $i\geq 1$. If the associated
primes of $I$ are generated by linear forms, then
$$
\delta_I(d)\geq (d_{k+1}-\ell)d_{k+2}\cdots d_c\ \mbox{ if }\
1\leq d\leq \sum\limits_{i=1}^{c}\left(d_i-1\right)-1,
$$
where $0\leq k\leq c-1$ and $\ell$ are integers such that
$d=\sum_{i=1}^{k}\left(d_i-1\right)+\ell$ and $1\leq \ell \leq
d_{k+1}-1$.
\end{conjecture}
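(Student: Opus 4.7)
The plan is to bound $\delta_I(d)$ below by the footprint function ${\rm fp}_I(d)$ of Definition~\ref{def:fp}, using Theorem~\ref{rth-footprint-lower-bound}(a), and to identify the conjectured quantity as the footprint of the ``model'' monomial complete intersection $J=(t_1^{d_1},\ldots,t_c^{d_c})$. First I would reduce to the case of $K$ algebraically closed by the base-change argument employed in the proof of Theorem~\ref{banff-july-22-29-2018}: the inclusion $\mathcal{F}_d(I)\subseteq\mathcal{F}_d(\overline{I})$, combined with invariance of degrees and Hilbert functions under faithfully flat extensions, gives $\delta_I(d)\ge\delta_{\overline{I}}(d)$. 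Thus one may assume $K$ is infinite and exploit generic linear changes of coordinates.

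The next step is to pin down the model computation. For $J=(t_1^{d_1},\ldots,t_c^{d_c})\subset S=K[t_1,\ldots,t_s]$ (with $c=s-1$), a standard monomial $t^a$ is a zerodivisor modulo $J$ precisely when $a_i\ge 1$ for some $i\le c$, and its $(t_1,\ldots,t_c)$-primary component coincides with that of $(J,t^a/t_s^{a_s})$ by inverting $t_s$. A direct monomial count then yields
\[
\deg(S/(J,t^a))=d_1\cdots d_c-\prod_{i=1}^c(d_i-a_i),
\]
so ${\rm fp}_J(d)=\min_{a}\prod_{i=1}^c(d_i-a_i)$ over standard zerodivisor monomials of degree $d$. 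A greedy rearrangement---push mass onto the variables with the smallest caps $d_i-1$, using that any two-variable slice $(d_i-a)(d_j-d+a)$ is a downward parabola minimized at its endpoints---shows this minimum equals $(d_{k+1}-\ell)d_{k+2}\cdots d_c$, attained by $a_1=d_1-1,\ldots,a_k=d_k-1,a_{k+1}=\ell$. Since $J$ is monomial, Proposition~\ref{geil-carvalho-monomial} gives $\delta_J(d)={\rm fp}_J(d)=(d_{k+1}-\ell)d_{k+2}\cdots d_c$, so the conjecture holds with equality in the model case $I=J$.

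The remaining task, and the main obstacle, is to compare $\delta_I(d)$ with $\delta_J(d)$ for an arbitrary complete intersection $I$ of the same multidegree. Choosing a monomial order so that ${\rm in}_\prec(I)=J$ is unavailable in general: for a generic complete intersection of type $(2,2)$ in three variables the revlex generic initial ideal is $(t_1^2,t_1t_2,t_2^3)$, whose footprint at $d=1$ equals $1$, while $\delta_I(1)=2$; so the footprint bound from the initial ideal alone is too weak. I would attempt two complementary strategies. The first is a Gr\"obner deformation approach: build a flat family degenerating $I$ to a monomial ideal with the same footprint bound and establish lower semicontinuity of $\delta$ under such specializations, yielding $\delta_I(d)\ge\delta_J(d)$. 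The second is an inductive geometric approach on the codimension $c$, using Lemma~\ref{lem:Fcomparison} with $I'=(I\colon\mathfrak{p})$ for a suitably chosen associated prime $\mathfrak{p}$ to link $I$ to a complete intersection of strictly smaller codimension, combined with a Bezout/Schwartz--Zippel-style count of the points of $V(I)$ meeting a degree-$d$ hypersurface (in the Cartesian product case this count is exactly Schwartz--Zippel, matching the conjectured bound). The hard part either way is to show that the monomial complete intersection is extremal---has the smallest generalized minimum distance---among all complete intersection Geramita ideals of fixed type, a uniform statement that does not seem to follow directly from the tools developed in Sections~\ref{mdf-section}--\ref{min-dis-section}.
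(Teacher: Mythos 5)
This statement is labeled a \emph{conjecture} in the paper and is not proved there; the authors only note that it is known when ${\rm in}_\prec(I)$ is itself a complete intersection \cite[Theorem~3.14]{min-dis-ci}, and they establish special cases in Section~\ref{ci-section} (namely $d=1$ in Proposition~\ref{stefan-adam-uniform}, and $d=1,c-1,c$ for quadrics in Corollary~\ref{conjecture-some-cases}). So there is no proof in the paper to compare against, and your honest assessment that the general case is open is correct.

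Your model computation is sound and worth keeping in mind: for $J=(t_1^{d_1},\ldots,t_c^{d_c})$ the identity $\deg(S/(J,t^a))=\prod d_i-\prod(d_i-a_i)$ together with the endpoint argument for the two-variable slices gives ${\rm fp}_J(d)=(d_{k+1}-\ell)d_{k+2}\cdots d_c$, and by Proposition~\ref{geil-carvalho-monomial} this equals $\delta_J(d)$, so the conjectured bound is sharp. You also correctly diagnose why the footprint bound of Theorem~\ref{rth-footprint-lower-bound}(a) does not carry the general case: the initial ideal of a generic complete intersection is usually not a complete intersection (your $(2,2)$ example with generic revlex initial ideal $(t_1^2,t_1t_2,t_2^3)$ is the right kind of obstruction, and a quick check gives ${\rm fp}=1<2=\delta_I(1)$ there). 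The two strategies you sketch at the end --- semicontinuity of $\delta$ under Gr\"obner degeneration, or an inductive link via Lemma~\ref{lem:Fcomparison} to a lower-codimension residual --- are reasonable directions but you have not closed either, and, as you say, neither follows from the tools in Sections~\ref{mdf-section}--\ref{min-dis-section}. In fact, the paper itself reformulates this statement (for radical complete intersections) as equivalent to the Eisenbud--Green--Harris conjecture CB12 (see Conjecture~\ref{CB12} and the surrounding Proposition), which remains open except in low-codimensional or geometrically constrained cases; so the ``hard part'' you isolate is a well-known hard part. To turn this into a contribution you would need at minimum to prove the extremality of the monomial model among complete intersection Geramita ideals of fixed multidegree, which is precisely the content of the conjecture.
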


This conjecture holds if the initial ideal of $I$ with respect to some monomial order is a complete
intersection \cite[Theorem~3.14]{min-dis-ci}. 
Our  results show
that for complete intersections ${\rm v}(I)={\rm reg}(S/I)$
(Theorem~\ref{banff-july-22-29-2018}) and
$\delta_I(d)\geq {\rm reg}(S/I)-d+1$ for $1\leq d<{\rm reg}(S/I)$
if $I$ is a vanishing ideal (Corollary~\ref{sept1-18}).
Thus the conjecture is best possible for vanishing ideals in the sense that it covers all cases where
$\delta_I (d) > 1$ because the regularity of $S/I$ is equal to
$\sum\limits_{i=1}^{c}\left(d_i-1\right)$.

Two special cases of the conjecture that are still open are the
following.

\begin{conjecture}\label{tohaneanu-eisenbud} Let $\mathbb{X}$ be a
finite set of reduced points in $\mathbb{P}^{s-1}$ and suppose that $I=I(\mathbb{X})$ is a
complete intersection generated by $f_1,\ldots,f_{c}$, $c=s-1$, with
$d_i=\deg(f_i)$ for $i=1,\ldots,c$, and $2\leq d_i\leq d_{i+1}$ for
all $i$. Then
\begin{itemize}
\item[(a)] \cite[Conjecture~4.9]{tohaneanu-vantuyl}
$\delta_I(1)\geq (d_1-1)d_2\cdots d_{c}$.
\item[(b)] If $f_1,\ldots,f_c$ are quadratic forms, then $\delta_I(d)\geq
2^{c-d}$ for $1\leq d\leq c$ or equivalently ${\rm hyp}_I(d)\leq
2^c-2^{c-d}$ for $1\leq d\leq c$.
\end{itemize}
\end{conjecture}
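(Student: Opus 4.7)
The plan is to attack parts (a) and (b) of Conjecture~\ref{tohaneanu-eisenbud} in sequence, reducing (a) to a hyperplane-section bound and pursuing an inductive slicing strategy for (b).

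For part (a), the first step is to reformulate the inequality $\delta_I(1) \geq (d_1-1)d_2\cdots d_c$ as the equivalent point-count bound $|\mathbb{X} \cap V(h)| \leq d_2 \cdots d_c$ for every non-zero linear form $h$. Indeed, Lemma~\ref{jul11-15} applied to $F = \{h\}$ gives $\deg(S/(I,h)) = |\mathbb{X} \cap V(h)|$, while Theorem~\ref{wolmer-obs} identifies $\delta_I(1) = d_1 \cdots d_c - \max_h \deg(S/(I,h))$. After a linear change of coordinates one may assume $h = t_s$ and pass to $S' = K[t_1,\ldots,t_{s-1}]$ with $\bar I = (I + (h))/(h)$ generated by the restrictions $\bar f_1, \ldots, \bar f_c$. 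When $h$ vanishes at some point of $\mathbb{X}$, the ideal $(I,h)$ still has height $c$, so $\bar I$ has height $c-1$ in $S'$ and $\dim(S'/\bar I) = 1$.

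In the equigenerated case $d_1 = \cdots = d_c = d$, this concludes cleanly: after extending to an infinite coefficient field (which preserves Hilbert functions and degrees), Lemma~\ref{uniform-regular-sequence} applied to $\bar I$ produces forms $g_1, \ldots, g_{c-1} \in \bar I_d$ forming a regular sequence in $S'$. Since $(g_1, \ldots, g_{c-1}) \subseteq \bar I$ are graded ideals of the same height $c - 1$, Lemma~\ref{tohaneanu-aug29-18}(b) yields $|\mathbb{X} \cap V(h)| = \deg(S'/\bar I) \leq \deg(S'/(g_1, \ldots, g_{c-1})) = d^{c-1}$, matching the conjectured bound. For the non-equigenerated case the main obstacle is extracting from $\bar f_1, \ldots, \bar f_c$ a regular sequence in $S'$ whose degree product equals precisely $d_2 \cdots d_c$, i.e., dropping the \emph{smallest} degree $d_1$; a naive Bezout-style argument drops the largest degree instead and yields only the weaker bound $d_1 \cdots d_{c-1}$. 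A plausible remedy is to replace $f_2, \ldots, f_c$ by perturbations of the form $f_i + h \cdot \psi_i$, chosen so that the restrictions $\bar f_i$ remain a regular sequence while carrying the correct degree data, but verifying that such perturbations exist and preserve regularity is where the argument appears genuinely delicate.

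For part (b), the inequality $\mathrm{hyp}_I(d) \leq 2^c - 2^{c-d}$ for quadratic complete intersections is a restricted case of the Eisenbud--Green--Harris conjecture. The natural plan is to induct on $c$ by slicing with a generic linear form $h$ and combining the inductive bound with the degree additivity of Lemma~\ref{degree-initial-footprint}(c). As a computational surrogate, the footprint bound $\mathrm{fp}_I(d) \leq \delta_I(d)$ from Theorem~\ref{rth-footprint-lower-bound}(a) offers access: choosing a monomial order under which $\mathrm{in}_\prec(I)$ contains as many of the pure squares $t_i^2$ as possible reduces the problem to counting standard monomials not divisible by any $t_i^2$, which is combinatorially tractable via Clements--Lindström-type arguments. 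The main obstacle—and the reason EGH remains open for $1 < d < c-1$—is that there is no known lex-plus-powers analogue controlling the Hilbert function of a \emph{generic} quadratic complete intersection for intermediate $d$; the known proofs at $d = 1, c-1, c$ rely on Gorenstein duality and compressed-algebra techniques that do not interpolate smoothly to the middle degrees.
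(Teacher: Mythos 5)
The statement you were asked to prove is an open conjecture, and the paper does not claim (and does not contain) a full proof of it. What the paper does prove are special cases: Proposition~\ref{stefan-adam-uniform} establishes part~(a) when $I$ is equigenerated, Corollary~\ref{stefan-adam-P2} handles $c=2$ (i.e.\ $\mathbb{P}^2$, recovering \cite[Theorem~4.10]{tohaneanu-vantuyl}), and Corollary~\ref{conjecture-some-cases} establishes part~(b) for $d=1$, $d=c-1$, and $d=c$. Your proposal correctly recognizes the open status and sketches the same strategy the paper uses where it succeeds: in the equigenerated case of~(a), slicing by $h\in\mathcal{F}_1(I)$, passing to $S'$, invoking Lemma~\ref{uniform-regular-sequence} to produce a regular sequence in $\bar I$ of degree $e$, and comparing degrees via Lemma~\ref{tohaneanu-aug29-18}(b) is exactly Proposition~\ref{stefan-adam-uniform}.

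Two points where your analysis drifts from what the paper does or says. First, your claim that a naive Bezout-type regular-sequence argument ``drops the largest degree and yields only the weaker bound $d_1\cdots d_{c-1}$'' is not quite right on its own terms: since $d_1\le d_c$, one has $d_1\cdots d_{c-1}\le d_2\cdots d_c$, so a bound $\mathrm{hyp}_I(1)\le d_1\cdots d_{c-1}$ would in fact be \emph{stronger} than what the conjecture requires. The genuine obstacle in the mixed-degree case is not that the bound comes out too weak but that the regular sequence of degrees $d_1,\dots,d_{c-1}$ need not exist in $\bar I$ at all --- for instance $\bar f_1$ may vanish after restricting modulo $h$, leaving $\bar I_{d_1}=0$ --- and the perturbation you suggest has to navigate exactly this. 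The paper sidesteps this only in the equigenerated case (where Lemma~\ref{uniform-regular-sequence} applies regardless of which $\bar f_i$ survive) and in $c=2$ (where a single form of degree $d_2$ suffices). Second, for part~(b) you propose footprint counting and a Clements--Lindstr\"om-type combinatorial reduction, but the paper's partial results come from a different place entirely: $d=1$ is Proposition~\ref{stefan-adam-uniform} with $e=2$; $d=c-1$ is Corollary~\ref{geramita-gorenstein}, which combines the Singleton-type bound of Theorem~\ref{banff-theorem-1} with the Gorenstein symmetry $\deg(S/I)=1+H_I(r_0-1)$; and $d=c$ follows because $\mathrm{reg}(S/I)=c$ is the regularity index of $\delta_I$ by Proposition~\ref{monday-morning} and Theorem~\ref{banff-july-22-29-2018}. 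Your sketch is plausible as a line of attack but does not recover these cases, and the paper (Proposition~\ref{prop:CB10}) frames part~(b) as equivalent to \cite[Conjecture~CB10]{Eisenbud-Green-Harris}, pointing to the Cayley--Bacharach literature rather than lex-plus-powers combinatorics as the relevant machinery.
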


We prove part (a) of this conjecture, in a more general setting, when
$I$ is equigenerated.

\begin{proposition}\label{stefan-adam-uniform} Let $I\subset S$ be an
unmixed
graded ideal of
height $c$, minimally generated by forms of degree $e\geq 2$, whose associated
primes are generated by linear forms. Then
$$
{\rm hyp}_I(1)\leq e^{c-1}
$$
and $\delta_I(1)\geq \deg(S/I)-e^{c-1}$. Furthermore
$\delta_I(1)\geq e^c-e^{c-1}$ if $I$ is a complete intersection.
\end{proposition}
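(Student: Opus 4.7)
The plan is to bound ${\rm hyp}_I(1)$ by showing, for each linear form $h$ with $(I\colon h)\neq I$, the existence of a complete intersection contained in $(I,h)$ of degree $e^{c-1}$. If $\mathcal{F}_{1,1}(I)=\emptyset$, then ${\rm hyp}_I(1)=0$ and the inequality is immediate, so fix $h\in S_1$ with $(I\colon h)\neq I$. As in the proof of Theorem~\ref{banff-july-22-29-2018}, extending scalars to the algebraic closure $\overline{K}$ preserves all the hypotheses (unmixedness, height $c$, equigeneration in degree $e$, linearity of the generators of associated primes) as well as $\deg(S/I)$ and $\deg(S/(I,h))$; so I may assume $K$ is infinite. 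Because $h$ is a zero-divisor on $S/I$, it lies in some $\mathfrak{p}\in{\rm Ass}(I)$, which has height $c$ by unmixedness, and this forces ${\rm ht}(I,h)=c$.

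Passing to $\bar S:=S/(h)$, set $\bar I:=(I,h)/(h)$. Then $\dim(\bar S/\bar I)=\dim(S/(I,h))=s-c$, so $\bar I$ has height $c-1$ in $\bar S$, and $\bar I$ is generated by its degree-$e$ piece $\bar I_e$ (since $I$ is equigenerated in degree $e$). I construct inductively a sequence $\bar g_2,\ldots,\bar g_c\in\bar I_e$ such that $(\bar g_2,\ldots,\bar g_k)$ has height $k-1$ in $\bar S$ for each $k=2,\ldots,c$. Assuming this has been achieved up to some $k<c$, every minimal prime $\mathfrak{Q}_j$ of $(\bar g_2,\ldots,\bar g_k)$ has height exactly $k-1$ (Krull's height theorem in the Cohen--Macaulay ring $\bar S$); since ${\rm ht}(\bar I)=c-1>k-1$, no $\mathfrak{Q}_j$ contains $\bar I$, hence none contains the vector space $\bar I_e$. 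Over the infinite field $K$, a finite-dimensional vector space is not a union of finitely many proper subspaces, so there is $\bar g_{k+1}\in\bar I_e$ avoiding all the $\mathfrak{Q}_j$, forcing $(\bar g_2,\ldots,\bar g_{k+1})$ to have height $k$.

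Lifting each $\bar g_i$ to a degree-$e$ element $g_i\in I$ (any degree-$e$ representative in $I+(h)$ can be adjusted by a multiple of $h$ of degree $e-1$ so as to lie in $I$), the sequence $h,g_2,\ldots,g_c$ generates an ideal of height $c$ in $S$: its image modulo $h$ has height $c-1$, and $h$ is a nonzero-divisor. Hence $(h,g_2,\ldots,g_c)$ is a complete intersection of forms of degrees $1,e,\ldots,e$, and Bezout yields
$$
\deg\bigl(S/(h,g_2,\ldots,g_c)\bigr)=1\cdot e^{c-1}=e^{c-1}.
$$
Since $(h,g_2,\ldots,g_c)\subset (I,h)$ are graded ideals of the same height $c$, Lemma~\ref{tohaneanu-aug29-18}(b) (with equality allowed when the two ideals coincide) yields $\deg(S/(I,h))\leq e^{c-1}$. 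Maximizing over $h$ gives ${\rm hyp}_I(1)\leq e^{c-1}$, so $\delta_I(1)\geq\deg(S/I)-e^{c-1}$. Finally, if $I$ itself is a complete intersection of $c$ forms of degree $e$, Bezout gives $\deg(S/I)=e^c$, and so $\delta_I(1)\geq e^c-e^{c-1}$.

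The main obstacle is the inductive construction of the degree-$e$ regular sequence inside $\bar I_e$: this is where both the equigeneration hypothesis (which ensures $\bar I=(\bar I_e)$) and the reduction to an infinite ground field (needed to avoid a finite union of proper subspaces) are used essentially.
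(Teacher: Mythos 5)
Your proof is correct and follows essentially the same strategy as the paper's: fix a linear zero-divisor $h$, pass to the quotient $S/(h)$ (the paper uses the isomorphic polynomial subring $S' = K[t_2,\ldots,t_s]$ via the substitution $t_1\mapsto\sum\lambda_jt_j$), build a length-$(c-1)$ regular sequence of degree-$e$ forms inside the image of $I$, and compare degrees. The one substantive addition in your version is the explicit reduction to an infinite ground field via the faithfully flat base change $K\hookrightarrow\overline K$, mirroring the technique used in the proof of Theorem~\ref{banff-july-22-29-2018}. The paper's proof simply invokes Lemma~\ref{uniform-regular-sequence} with the phrase ``As $K$ is infinite,'' even though the statement of Proposition~\ref{stefan-adam-uniform} does not assume $K$ infinite, so your base-change step patches a genuine omission. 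Beyond that, the differences are cosmetic: you run the prime-avoidance induction by hand rather than citing Lemma~\ref{uniform-regular-sequence}, and you lift the regular sequence back to $S$ to produce a complete intersection $(h,g_2,\ldots,g_c)\subset(I,h)$ and then apply Lemma~\ref{tohaneanu-aug29-18}(b), whereas the paper compares degrees of $S'/(g_1,\ldots,g_{c-1})$ and $S'/I'$ directly through a short exact sequence; the two yield the same bound $\deg(S/(I,h))\leq e^{c-1}$.
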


\begin{proof} Since the associated primes of $I$ are generated by linear
forms and $e\geq 2$, one has $\mathcal{F}_1(I)\neq\emptyset$.
Take any linear form $h=t_k-\sum_{j\neq
i}\lambda_jt_j$ in $\mathcal{F}_1(I)$, $\lambda_j\in K$. For
simplicity of notation assume $k=1$. It
suffices to show that $\deg(S/(I,h))\leq e^{c-1}$. Let
$\{f_1,\ldots,f_n\}$ be a minimal set of generators of $I$ consisting
of homogeneous polynomials with $\deg(f_i)=e$ for all $i$. Setting
$f_i'=f_i(\sum_{j\neq 1}\lambda_jt_j,t_2,\ldots,t_s)$ for
$i=1,\ldots,n$, $S'=K[t_2,\ldots,t_s]$, and $I'=(f_1',\ldots,f_n')$,
there is an isomorphism $\varphi$ of graded $K$-algebras
$$
S/(I,h)\stackrel{\varphi}{\longrightarrow} S'/I',\quad t_1\mapsto
\lambda_2t_2+\cdots+\lambda_st_s,\ \ t_i\mapsto t_i,\ i=2,\ldots,s.
$$
\quad Note that
$\varphi(f+(I,h))=f(\lambda_2t_2+\cdots+\lambda_st_s,t_2,\ldots,t_s)+I'$
for $f$ in $S$ and that $\varphi$ has degree $0$, that is, $\varphi$
is degree preserving. Hence $S/(I,h)$ and $\deg(S'/I')$ have the same
degree and the same dimension. Since ${\rm ht}(I,h)={\rm ht}(I)$, we
get ${\rm ht}(I')={\rm ht}(I)-1$, that is, ${\rm ht}(I')=c-1$. By
definition $f_i'$ is either $0$ or has degree $e$, that is, $I'$ is
generated by forms of degree $e$. As $K$ is infinite,
there exists a minimal set of generators of $I'$,
$\{g_1,\ldots,g_t\}$, such that $\deg(g_i)=e$ for all $i$ and
$g_1,\ldots,g_{c-1}$ form a regular sequence (see
Lemma~\ref{uniform-regular-sequence}).
From the exact sequence
$$0\longrightarrow I'/(g_1,\ldots,g_{c-1})\longrightarrow S'/(g_1,\ldots,g_{c-1})
\longrightarrow S'/I' \longrightarrow 0,$$
we get $e^{c-1}=\deg(S/(g_1,\ldots,g_{c-1}))\geq
\deg(S'/I')=\deg(S/(I,h))$. This
proves that ${\rm hyp}_I(1)$ is less than or equal to $e^{c-1}$. Hence $\delta_I(1)\geq
\deg(S/I)-e^{c-1}$. Therefore, if $I$ is a complete intersection,
$\deg(S/I)=e^c$ and we obtain the inequality $\delta_I(1)\geq
e^c-e^{c-1}$.
\end{proof}

As a consequence, we recover the fact that
Conjecture~\ref{tohaneanu-eisenbud}(a) holds for $\mathbb{P}^2$
\cite[Theorem~4.10]{tohaneanu-vantuyl}.

\begin{corollary}\label{stefan-adam-P2} Let $I\subset S$ be a
graded ideal of height $2$, minimally generated by two forms
$f_1,f_2$ of degrees $e_1,e_2$,
with $2\leq e_1\leq e_2$, whose associated
primes are generated by linear forms. Then ${\rm hyp}_I(1)\leq e_2$
and $\delta_I(1)\geq e_1e_2-e_2$.
\end{corollary}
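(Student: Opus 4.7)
The plan is to mimic the change-of-variables argument that drives Proposition~\ref{stefan-adam-uniform}, but to exploit the fact that, after reducing modulo a linear form, the resulting ideal lives in a two-variable polynomial ring, where height one ideals are particularly well-behaved.  First, I observe that if $\mathcal F_1(I)=\emptyset$ then $\delta_I(1)=\deg(S/I)=e_1 e_2\geq e_1 e_2 - e_2$ and ${\rm hyp}_I(1)=0\leq e_2$, so the conclusion is trivial.  Otherwise, fix any $h\in\mathcal F_1(I)$.  After an obvious relabeling of variables, write $h=t_1-\lambda_2 t_2-\lambda_3 t_3$ and use the degree-preserving isomorphism
$$
\varphi\colon S/(I,h)\;\xrightarrow{\;\sim\;}\;S'/I',\qquad t_1\mapsto \lambda_2 t_2+\lambda_3 t_3,\;t_i\mapsto t_i\;(i=2,3),
$$
where $S'=K[t_2,t_3]$ and $I'=(f_1',f_2')$ with $f_i'=f_i(\lambda_2 t_2+\lambda_3 t_3,t_2,t_3)$.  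Since each $f_i$ is homogeneous of degree $e_i$ and $\varphi$ substitutes a linear form for $t_1$, each $f_i'$ is either zero or homogeneous of degree exactly $e_i$.

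Next I use Lemma~\ref{degree-initial-footprint}(a) to conclude ${\rm ht}(I,h)={\rm ht}(I)=2$, hence ${\rm ht}(I')=1$ in the two-dimensional ring $S'$.  The proof now reduces to a short case analysis.  If $f_1'\neq 0$, then $(f_1')\subseteq I'$, both ideals are height one in $S'$, and $\deg(S'/(f_1'))=e_1$; therefore Lemma~\ref{tohaneanu-aug29-18}(b) applied to this chain yields $\deg(S'/I')\leq e_1\leq e_2$.  If $f_1'=0$, then $I'=(f_2')$, and $f_2'$ cannot vanish (else ${\rm ht}(I')=0$, contradicting ${\rm ht}(I')=1$), so $\deg(S'/I')=e_2$.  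In either case,
$$
\deg(S/(I,h))=\deg(S'/I')\leq e_2.
$$

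Because this bound holds for every $h\in\mathcal F_1(I)$, we obtain ${\rm hyp}_I(1)\leq e_2$.  Finally, since $I$ is a complete intersection of two forms of degrees $e_1,e_2$ in a three-variable ring, $\deg(S/I)=e_1 e_2$, and so
$$
\delta_I(1)=\deg(S/I)-{\rm hyp}_I(1)\;\geq\; e_1 e_2-e_2,
$$
which is the desired inequality.  The only mildly delicate point is the dichotomy $f_1'=0$ vs.~$f_1'\neq 0$, and in particular the need to rule out $I'=0$ in the former case; this is precisely what the height computation via Lemma~\ref{degree-initial-footprint}(a) supplies.  Note that, unlike in Proposition~\ref{stefan-adam-uniform}, no hypothesis that $K$ be infinite is needed, because no regular sequence extraction is required in dimension one.
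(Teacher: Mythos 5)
Your proof is correct and is essentially the adaptation of Proposition~\ref{stefan-adam-uniform} that the paper invokes without spelling out. The core move is the same degree-preserving change of variables sending $S/(I,h)$ to $S'/I'$; your contribution is to observe that in the height-one situation the dichotomy on whether $f_1'$ vanishes (backed by the height computation via Lemma~\ref{degree-initial-footprint}(a)) replaces the appeal to Lemma~\ref{uniform-regular-sequence}, which in turn removes the implicit requirement that $K$ be infinite. That is a genuine gain: the proof of Proposition~\ref{stefan-adam-uniform} uses the infinite-field hypothesis to extract a regular sequence, but your argument for this corollary works over any field, which is relevant since the paper's applications include finite fields. Two tiny caveats worth flagging. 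First, the corollary does not fix $s=3$, so after relabeling you should set $S'=K[t_2,\ldots,t_s]$ rather than $K[t_2,t_3]$; nothing in your argument actually uses two variables (the facts that a nonzero form of degree $e_1$ generates a height-one unmixed ideal of degree $e_1$, and that Lemma~\ref{tohaneanu-aug29-18}(b) compares degrees along a same-height chain, hold in any polynomial ring), so this is purely cosmetic. Second, the invocation of Lemma~\ref{degree-initial-footprint}(a) requires $I$ unmixed; this is automatic here because a height-$2$ ideal minimally generated by two forms is a complete intersection, hence Cohen--Macaulay, hence unmixed, but it would be worth saying so explicitly.
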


\begin{proof} It follows adapting the proof of
Proposition~\ref{stefan-adam-uniform}.
\end{proof}

\begin{corollary}\label{stefan-adam-uniform-gen} Let $I\subset S$ be an unmixed graded ideal minimally
generated by
forms of degree $e\geq 2$ whose associated
primes are generated by linear forms.
If $1\leq r\leq {\rm ht}(I)$, then
$$
{\rm hyp}_I(1,r)\leq e^{{\rm ht}(I)-r},
$$
$\delta_I(1,r)\geq \deg(S/I)-e^{{\rm ht}(I)-r}$,
and $\delta_I(1,r)\geq e^{{\rm ht}(I)}-e^{{\rm ht}(I)-r}$ if $I$ is a
complete intersection. 
\end{corollary}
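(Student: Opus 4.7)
The plan is to extend the argument of Proposition~\ref{stefan-adam-uniform} from $r=1$ to arbitrary $r$ with $1\leq r\leq c$, where $c={\rm ht}(I)$. It suffices to show that $\deg(S/(I,F))\leq e^{c-r}$ for every $F\in\mathcal{F}_{1,r}(I)$; the three stated inequalities then follow from the definition of ${\rm hyp}_I$, the identity $\delta_I(1,r)=\deg(S/I)-{\rm hyp}_I(1,r)$, and the fact that $\deg(S/I)=e^c$ when $I$ is a complete intersection generated in degree $e$.

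First I would reduce to the case that $K$ is infinite by base change to the algebraic closure $\overline{K}$, exactly as in the proof of Theorem~\ref{banff-july-22-29-2018}. The hypotheses on $I$, the Hilbert function, the degree, and the degrees of the minimal generators all transfer to $\overline{I}=I\overline{S}$, and any $F\in\mathcal{F}_{1,r}(I)$ remains in $\mathcal{F}_{1,r}(\overline{I})$ with $\deg(S/(I,F))=\deg(\overline{S}/(\overline{I},F))$ by flatness.

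Next, fix $F=\{h_1,\ldots,h_r\}\in\mathcal{F}_{1,r}(I)$. Since $h_1,\ldots,h_r$ are linearly independent in $S_1$, I would complete them to a $K$-basis of $S_1$ and apply the induced graded $K$-algebra automorphism of $S$ so that, without loss of generality, $h_i=t_i$ for $i=1,\ldots,r$; this preserves every hypothesis on $I$. Setting $S'=K[t_{r+1},\ldots,t_s]$ and letting $I'\subset S'$ be the image of $I$ under the projection sending $t_1,\ldots,t_r$ to $0$, one obtains a graded isomorphism $S/(I,F)\cong S'/I'$. By Lemma~\ref{degree-initial-footprint}(a), ${\rm ht}(I,F)=c$, so $\dim(S'/I')=s-c$ and hence ${\rm ht}_{S'}(I')=c-r$. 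Because $I$ is minimally generated by forms of degree $e$, the ideal $I'$ is generated by the nonzero images of these generators, all of degree $e$.

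Finally I would invoke Lemma~\ref{uniform-regular-sequence}, now available since $K$ is infinite, to extract a regular sequence $g_1,\ldots,g_{c-r}$ of forms of degree $e$ in $I'$. From the short exact sequence
$$
0\longrightarrow I'/(g_1,\ldots,g_{c-r})\longrightarrow S'/(g_1,\ldots,g_{c-r})\longrightarrow S'/I'\longrightarrow 0,
$$
together with the additivity of the degree (Proposition~\ref{additivity-of-the-degree}) applied to the Cohen--Macaulay complete intersection $S'/(g_1,\ldots,g_{c-r})$ of height $c-r$, I would conclude that
$$
\deg(S/(I,F))=\deg(S'/I')\leq\deg(S'/(g_1,\ldots,g_{c-r}))=e^{c-r}.
$$
The principal step to verify is the height computation ${\rm ht}_{S'}(I')=c-r$, which both unlocks the application of Lemma~\ref{uniform-regular-sequence} in length $c-r$ and ensures that the degree comparison is meaningful; the remaining manipulations are routine.
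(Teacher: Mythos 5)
Your proof is correct and follows essentially the same strategy as the paper: reduce the $r$ linearly independent linear forms to a situation where they can be eliminated, pass to a polynomial ring in $s-r$ variables, compute the height drop, and invoke Lemma~\ref{uniform-regular-sequence} to extract a length-$({\rm ht}(I)-r)$ regular sequence of degree-$e$ forms inside the contracted ideal. The paper's terse proof phrases the reduction via an echelon-form basis $h_1,\ldots,h_r$ with distinct leading variables and an iterated substitution as in Proposition~\ref{stefan-adam-uniform}, whereas you apply a single linear automorphism of $S$ sending the forms to $t_1,\ldots,t_r$ and then project onto $K[t_{r+1},\ldots,t_s]$; these are the same computation presented in different packaging. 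One genuine improvement in your write-up: you explicitly reduce to the algebraically closed field $\overline{K}$ before invoking Lemma~\ref{uniform-regular-sequence}, which requires $K$ infinite. The paper's proof of Proposition~\ref{stefan-adam-uniform} simply says ``As $K$ is infinite'' even though that hypothesis is absent from the statement, so your base-change preamble (noting that the unmixedness, Betti numbers, Hilbert function, degree, and membership $F\in\mathcal{F}_{1,r}$ with its associated $\deg(S/(I,F))$ all persist under the faithfully flat extension $S\hookrightarrow\overline{S}$) fills a real gap. Two small points to tighten: the citation of Proposition~\ref{additivity-of-the-degree} for the inequality $\deg(S'/I')\leq\deg(S'/(g_1,\ldots,g_{c-r}))$ is not the right reference — that follows from additivity of Hilbert polynomials across the short exact sequence (as in Lemma~\ref{tohaneanu-aug29-18}(b)) once one checks the dimensions agree, while Proposition~\ref{additivity-of-the-degree} is the primary-decomposition statement; and in the boundary case $r={\rm ht}(I)$ one finds $I'=0$ because $\dim(S'/I')=\dim(S')$ forces this in a domain, and the target bound is then the trivial $\deg(S')=1=e^{0}$ without any appeal to the lemma. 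Neither affects the correctness of the argument.
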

\begin{proof}
This follows by adapting the proof of
Proposition~\ref{stefan-adam-uniform} and observing the following. If
$f_1,\ldots,f_r$ are linearly independent linear forms and $t_1\succ\cdots\succ t_s$ is the
lexicographical order, we can find linear forms $h_1,\ldots,h_r$ such
that ${\rm in}_\prec(h_1)\succ\cdots\succ{\rm in}_\prec(h_r)$ and
$(f_1,\ldots,f_r)$ is equal to $(h_1,\ldots,h_r)$.
\end{proof}

\subsection*{Cayley-Bacharach Conjectures} In the following we explore the connections between
a modified form of Conjecture~\ref{footprint-ci-general} and a conjecture of Eisenbud-Green-Harris  \cite[Conjecture~CB12]{Eisenbud-Green-Harris}.

\begin{conjecture}[Strong form of {\cite[Conjecture~CB12]{Eisenbud-Green-Harris}} ]
\label{CB12}
Let $\Gamma$ be any subscheme of a zero-dimensional complete intersection
of hypersurfaces of degrees $d_1\leq \dots \leq d_c$ in a projective space $P^c$. If $\Gamma$ fails to impose independent conditions on hypersurfaces of degree $m$, then
$$\deg(\Gamma)\geq (e+1)d_{k+2}d_{k+3} \cdots d_c$$ where $e$ and $k$ are defined by the relations
$$\sum_{i=k+2}^c (d_i-1) \leq m+1 < \sum_{i=k+1}^c (d_i-1) \quad \text{ and } \quad e=m+1-\sum_{i=k+2}^c (d_i-1).$$
\end{conjecture}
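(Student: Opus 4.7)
The plan is to use Gorenstein duality (Cayley--Bacharach in the algebraic sense) to translate failure of imposing independent conditions into a geometric statement about the linkage-residual subscheme $\Gamma'$, and then to reduce the bound to the minimum distance conjecture for complete intersections (Conjecture~\ref{footprint-ci-general}).

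First I would pass to an Artinian reduction. With $S = K[t_0, \ldots, t_c]$ and $I_X = (f_1, \ldots, f_c)$, extend the base field if necessary so that a general linear form $h$ is regular on $S/I_X$, $S/I_\Gamma$, and $S/I_{\Gamma'}$, where $\Gamma'$ is the residual subscheme defined by $I_{\Gamma'} = I_X \colon I_\Gamma$. Set $A = S/(I_X, h)$: then $A$ is Artinian Gorenstein of socle degree $\sigma := \sum_{i=1}^c (d_i - 1)$, and the images $L$ and $L'$ of $I_\Gamma$ and $I_{\Gamma'}$ in $A$ satisfy $L' = (0 \colon_A L)$ and $L = (0 \colon_A L')$ by the perfect self-duality of $A$. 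From the Hilbert series identity $F_{A/L}(x) = (1-x) F_{S/I_\Gamma}(x)$ one gets
$$\deg(\Gamma) - H_{S/I_\Gamma}(m) = \sum_{i > m} H_{A/L}(i),$$
so the failure assumption is equivalent to $(A/L)_i \neq 0$ for some $i > m$.

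Second, invoke the perfect pairing $A_d \times A_{\sigma - d} \to K$: this yields $\dim_K (A/L)_d = \dim_K (L')_{\sigma - d}$, so the condition above is equivalent to $(L')_{\sigma - m - 1} \neq 0$. Lifting such a nonzero socle element to a form $F \in I_{\Gamma'} \setminus I_X$ of degree $\delta := \sigma - m - 1$ shows $\Gamma' \subseteq X \cap \{F = 0\}$. Combined with the degree identity $\deg(\Gamma) + \deg(\Gamma') = d_1 \cdots d_c$ under linkage and the monotonicity of $\delta_{I_X}$ from Theorem~\ref{rth-footprint-lower-bound}(e), we conclude
$$\deg(\Gamma) \ge \deg(X) - \deg(S/(I_X, F)) \ge \delta_{I_X}(\delta).$$
A direct check confirms that the substitution $\delta = \sigma - m - 1$ transforms the indices $(m, k, e)$ of CB12 into the indices $(d, k, \ell)$ of Conjecture~\ref{footprint-ci-general} (with $\delta = \sum_{i=1}^k (d_i - 1) + \ell$, $1 \le \ell \le d_{k+1} - 1$), and that the predicted lower bounds $(e+1) d_{k+2} \cdots d_c$ and $(d_{k+1} - \ell) d_{k+2} \cdots d_c$ coincide. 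Therefore CB12 is equivalent to Conjecture~\ref{footprint-ci-general} applied to $I_X$, and the task reduces to proving the latter.

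For the remaining step I would employ the lex-plus-powers philosophy of Eisenbud--Green--Harris: after a generic change of coordinates and a revlex monomial order, one hopes that $\mathrm{in}_\prec(I_X)$ is a complete intersection (or at least has the same Hilbert function as a lex-plus-powers ideal), in which case \cite[Theorem~3.14]{min-dis-ci} yields the desired footprint bound. Partial verifications already appear in the paper: the equigenerated case is handled by Proposition~\ref{stefan-adam-uniform}, and the case $c=2$ by Corollary~\ref{stefan-adam-P2}. The main obstacle is precisely this last step: proving that the initial ideal has the predicted structure is equivalent to the unresolved Eisenbud--Green--Harris (lex-plus-powers) conjecture, known only in special cases. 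The duality/linkage portion of the argument is routine; the genuine difficulty is inherited wholesale from this open Hilbert-function problem.
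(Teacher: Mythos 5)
You are attempting to prove what the paper states as an open conjecture: the paper itself does not prove CB12, and neither does the cited Eisenbud--Green--Harris survey. What the paper does establish, in the Proposition immediately after stating CB12, is its equivalence with Conjecture~\ref{footprint-ci-general} for radical complete intersections, and your first two steps reproduce that equivalence argument. Where the paper invokes the Cayley--Bacharach theorem CB7 of \cite{Eisenbud-Green-Harris} as a black box to translate ``$\Gamma$ fails to impose independent conditions in degree $m$'' into ``the linked scheme $\Gamma'$ lies on a hypersurface of degree $\sigma-m-1$ not containing $X$,'' you unwind the translation explicitly through the Artinian reduction $A=S/(I_X,h)$, the annihilator duality $L'=(0\colon_A L)$, and the perfect pairing $A_d\times A_{\sigma-d}\to K$; this is the same mechanism, spelled out rather than cited, and your index translation ($\delta=\sigma-m-1$, $\ell=d_{k+1}-e-1$) matches the paper's. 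Two small remarks: you do not actually need Theorem~\ref{rth-footprint-lower-bound}(e) to obtain $\deg(\Gamma)\geq\delta_{I_X}(\delta)$---it follows from the linkage degree identity and the definition of $\delta_{I_X}$ alone---and your closing paragraph is accurate and appropriately candid. The reduction terminates at Conjecture~\ref{footprint-ci-general}, which is itself open, and the partial verifications you list correspond exactly to Proposition~\ref{stefan-adam-uniform}, Corollary~\ref{stefan-adam-P2}, and Corollary~\ref{conjecture-some-cases} in the paper. There is no flaw in your reduction; but, as you correctly say, it is not a proof of CB12, and closing it hinges on a lex-plus-powers--type Hilbert-function statement that remains unresolved.
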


\begin{proposition}
 Conjectures \ref{footprint-ci-general} and \ref{CB12} are equivalent for radical complete intersections.
 \end{proposition}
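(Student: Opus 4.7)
The plan is to use Cayley--Bacharach duality for radical complete intersections to rewrite Conjecture~\ref{footprint-ci-general} as Conjecture~\ref{CB12} under the substitution $m = \sigma - d$, where $\sigma := \sum_{i=1}^{c}(d_i-1)-1$ is the socle degree of the Artinian reduction of $S/I$. Since $I$ is radical with linear-form associated primes, $I = I(\mathbb X)$ for a reduced set of $K$-rational points $\mathbb X \subset \mathbb P^{s-1} = \mathbb P^{c}$, and any subscheme of $\mathbb X$ is a subset $\Gamma \subset \mathbb X$ with residual $\Gamma' = \mathbb X \setminus \Gamma$. By Lemma~\ref{jul11-15}, $\deg(S/(I,F)) = |\Gamma'|$ where $\Gamma'$ is the common vanishing locus of $F$ in $\mathbb X$; hence
\[
\mathrm{hyp}_I(d) \;=\; \max\bigl\{|\Gamma'| : \Gamma' \subsetneq \mathbb X,\ H_{\Gamma'}(d) < H_{\mathbb X}(d)\bigr\}.
\]
The classical Cayley--Bacharach duality for Gorenstein zero-dimensional schemes (see \cite{Eisenbud-Green-Harris}) yields
\[
H_{\mathbb X}(k) - H_{\Gamma'}(k) \;=\; \deg(\Gamma) - H_{\Gamma}(\sigma - k),\qquad 0 \le k \le \sigma + 1,
\]
which converts $H_{\Gamma'}(d) < H_{\mathbb X}(d)$ into the statement that $\Gamma$ fails to impose independent conditions on forms of degree $\sigma - d$. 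Therefore
\[
\delta_I(d) \;=\; |\mathbb X| - \mathrm{hyp}_I(d) \;=\; \min\bigl\{|\Gamma| : \Gamma \subset \mathbb X \text{ fails independent conditions in degree } \sigma - d\bigr\}.
\]

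Next I would match the two parameterizations. The map $d \mapsto m = \sigma - d$ is a bijection from $\{1,\dots,\sigma\}$ (the range of $d$ in Conjecture~\ref{footprint-ci-general}) onto $\{0,\dots,\sigma-1\}$ (the range of $m$ covered by the admissible $(k,e)$-pairs of Conjecture~\ref{CB12}). Writing $d = \sum_{i=1}^{k}(d_i-1) + \ell$ with $1 \le \ell \le d_{k+1} - 1$, a direct computation gives
\[
m+1 \;=\; (d_{k+1} - 1 - \ell) + \sum_{i=k+2}^{c}(d_i - 1),
\]
so the integer $e := m + 1 - \sum_{i=k+2}^{c}(d_i-1) = d_{k+1} - 1 - \ell$ lies automatically in $[0, d_{k+1} - 2]$, matching the admissible range of \ref{CB12}, and $e + 1 = d_{k+1} - \ell$. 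Consequently the bound $(e+1)d_{k+2}\cdots d_c$ of Conjecture~\ref{CB12} is literally equal to the bound $(d_{k+1}-\ell)d_{k+2}\cdots d_c$ of Conjecture~\ref{footprint-ci-general}. Combined with the reformulation of $\delta_I(d)$ above, this shows the two conjectures assert the same family of inequalities for radical complete intersections, and hence are equivalent.

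The main obstacle is the invocation of Cayley--Bacharach duality in the precise form above; this is a classical result for complete intersections (and more generally for arithmetically Gorenstein zero-dimensional schemes), proved in the Davis--Geramita--Orecchia tradition and exposited in \cite{Eisenbud-Green-Harris}, so it can be cited rather than reproved. A small sanity check one should perform in writing up is that the parameterization $d=\sum_{i=1}^k(d_i-1)+\ell$ with $1\le\ell\le d_{k+1}-1$ covers each $d\in\{1,\dots,\sigma\}$ uniquely and that the endpoint cases $d=\sigma$ (hence $m=0$) and $d=\sigma_{\mathrm{top}}=\sum(d_i-1)$ (which falls outside the range of \ref{footprint-ci-general} and corresponds to the trivially vacuous $m=-1$ in \ref{CB12}) are consistent; the remaining work is routine arithmetic bookkeeping to verify that the two parameterizations agree.
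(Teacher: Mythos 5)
Your proof is correct and follows essentially the same route as the paper: both reduce the equivalence to the Cayley--Bacharach theorem CB7 of Eisenbud--Green--Harris (you state it as a Hilbert-function duality for the linked pair $\Gamma,\Gamma'$; the paper cites it to say $\Gamma=V(I\colon F)$ fails to impose independent conditions in degree $m=\sigma-d$), combine this with $\delta_I(d)=\deg(S/I)-\mathrm{hyp}_I(d)=\min\deg(S/(I\colon F))$ (your Lemma~\ref{jul11-15} argument is exactly what underlies Theorem~\ref{rth-min-dis-vi}), and then perform the arithmetic $e+1=d_{k+1}-\ell$ matching the two parameterizations. One terminological slip worth correcting: $\sigma=\sum_{i=1}^c(d_i-1)-1$ is not the socle degree of the Artinian reduction of $S/I$ (that is $\mathrm{reg}(S/I)=\sum(d_i-1)$), but rather the $a$-invariant $a(S/I)$, i.e.\ the ``critical degree'' of Eisenbud--Green--Harris; the duality formula you use is nonetheless the correct one.
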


\begin{proof} 
We  first prove that Conjecture \ref{CB12} for $m=\sum_{i=k+1}^c(d_i-1)-\ell-1$ and $e=d_{k+1}-\ell-1$ implies Conjecture \ref{footprint-ci-general}.
Let $I$ be a radical complete intersection ideal minimally generated by forms of degrees $d_1\leq \cdots \leq d_c$.
Let $H$ be any hypersurface defined by a form $F$ of degree $d$. Let $\mathbb{X}$ be the scheme defined by $I(\mathbb{X})=(I,F)$ and let $\Gamma$ be the residual scheme defined by $I(\Gamma)=I:F$. 
By the Cayley-Bacharach Theorem \cite[CB7]{Eisenbud-Green-Harris},  $\Gamma$ must fail to impose independent conditions on hypersurfaces of degree $\sum_{i=1}^c(d_i-1)-d-1=m$. Now  Conjecture \ref{CB12} implies $\deg(S/I:F)=\deg(\Gamma)\geq ed_{k+2}d_{k+3} \cdots d_c$, which in view of Theorem \ref{rth-min-dis-vi} gives 
$$\delta_I(d)\geq (e+1)d_{k+2}d_{k+3} \cdots d_c=(d_{k+1}-\ell)d_{k+2}d_{k+3} \cdots d_c.$$

For the converse, we prove that Conjecture \ref{footprint-ci-general} with $d=\sum_{i=1}^c(d_i -1)-m-1$ and $\ell=d_{k+1}-e-1$ recovers Conjecture \ref{CB12}. Let $\Gamma$ be any subscheme of a complete intersection, and suppose that $\Gamma$ fails to impose independent conditions on hypersurfaces of degree $m$. Assuming that $\Gamma$ spans a projective space
$\mathbb{P}^c$, take a radical complete intersection ideal $I$ contained in $I_\Gamma$, and let $\mathbb{X}$ be the scheme defined by $I(\mathbb{X})=I:I(\Gamma)$. By \cite[CB7]{Eisenbud-Green-Harris}, $\mathbb{X}$ lies on a hypersurface of degree $\sum_{i=1}^c(d_i -1)-m-1=d$. Then Conjecture \ref{footprint-ci-general} and Theorem \ref{rth-min-dis-vi} give 
 $$\deg(\Gamma)=\deg(S/I:F)\geq (d_{k+1}-\ell)d_{k+2}d_{k+3}\cdots d_c=(e+1)d_{k+2}d_{k+3}\cdots d_c.$$
\end{proof}

Conjecture \ref{CB12} has been recently proven in \cite[Theorem 5.1]{HU}  for $k=1$ under additional assumptions on the Picard group of the complete intersection.  We now consider the case when $d_1=\dots=d_c=2$. In this case Conjecture \ref{footprint-ci-general} specializes to  Conjecture \ref{tohaneanu-eisenbud}(b) and Conjecture \ref{CB12} is related to {\cite[Conjecture~CB10]{Eisenbud-Green-Harris}.

\begin{proposition}
\label{prop:CB10}
The following statements are equivalent:
\begin{enumerate}
\item {[Conjecture \ref{tohaneanu-eisenbud}(b)]} Let $I$ be a complete intersection generated by $c$ quadratic forms. Then $\delta_I(d)\geq 2^{c-d}$ for $1\leq d\leq c$ or equivalently ${\rm hyp}_I(d)\leq
2^c-2^{c-d}$ for $1\leq d\leq c$.
\item {\cite[Conjecture~CB10]{Eisenbud-Green-Harris}}
If $\mathbb{X}$ is an ideal-theoretic 
complete intersection of $c=s-1$ quadrics in $\mathbb P^{s-1}$ and $f \in
S:=K[t_1,\ldots,t_s]$ is a homogeneous polynomial of degree $d$ such
that $\deg(S/(I(\mathbb{X}),f))>2^c-2^{c-d}$, then $f\in I_{\mathbb{X}}$.
\end{enumerate}
\end{proposition}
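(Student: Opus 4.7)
The plan is to unravel the definitions and recognize the two statements as reformulations of a single uniform bound on $\deg(S/(I,f))$. Since $I$ is a complete intersection of $c$ quadrics in $s=c+1$ variables, $\deg(S/I)=2^c$, so the inequality $\delta_I(d)\geq 2^{c-d}$ is equivalent to ${\rm hyp}_I(d)\leq 2^c-2^{c-d}$. By the definition of ${\rm hyp}_I$, statement~(1) says that $\deg(S/(I,f))\leq 2^c-2^{c-d}$ for every $f$ in $\mathcal{F}_d=\{g\in S_d \mid g\notin I,\ (I\colon g)\neq I\}$. The contrapositive of~(2) is the same bound quantified over the a priori larger set $S_d\setminus I$, because $f\in I$ makes $\deg(S/(I,f))=\deg(S/I)=2^c$ and the conclusion of~(2) automatic.

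With this reformulation in hand, I would first note that $(2)\Rightarrow (1)$ is immediate from the inclusion $\mathcal{F}_d\subseteq S_d\setminus I$. For the converse, I would take $f\in S_d\setminus I$ satisfying the hypothesis of~(2) and distinguish cases by the height of $(I,f)$. Since $I$ is Cohen--Macaulay of height $c$, either $f$ is a zero-divisor on $S/I$, so ${\rm ht}(I,f)=c$ and $f\in\mathcal{F}_d$; or $f$ is a non-zero-divisor, so ${\rm ht}(I,f)=c+1$ and $\dim(S/(I,f))=0$. In the zero-divisor case, (1) forces $\deg(S/(I,f))\leq 2^c-2^{c-d}$, contradicting the hypothesis of~(2) and proving the desired conclusion $f\in I$.

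The main obstacle is the non-zero-divisor case. The short exact sequence $0\to S/I(-d)\stackrel{\cdot f}{\to} S/I\to S/(I,f)\to 0$ combined with $F_{S/I}(x)=(1+x)^c/(1-x)$ yields $\deg(S/(I,f))=\dim_K S/(I,f)=d\cdot 2^c$ under the paper's convention, which exceeds $2^c-2^{c-d}$ and would falsify~(2) for every non-zero-divisor. In keeping with the geometric formulation of \cite{Eisenbud-Green-Harris}, the degree in~(2) must be understood as the projective degree of $V(f)\cap\mathbb{X}$, i.e.\ the leading coefficient of the Hilbert polynomial of $S/(I,f)$, which vanishes precisely when ${\rm ht}(I,f)=c+1$. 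The two conventions coincide whenever ${\rm ht}(I,f)=c$ by Lemma~\ref{degree-initial-footprint}, and I would make this reading explicit at the outset of the proof so that the hypothesis of~(2) is vacuous for non-zero-divisors; the case analysis above then delivers the equivalence.
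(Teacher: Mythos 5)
Your proof follows the same contrapositive/definition-unwinding route as the paper's, but you correctly identify a subtlety that the paper's one-sentence argument glosses over. The paper simply asserts that statement~(2) is equivalent to ``$\deg(S/(I,f))\leq 2^c-2^{c-d}$ for all $f\in\mathcal{F}_d$,'' whereas the contrapositive of~(2) quantifies over the a priori larger set $S_d\setminus I$. Your Hilbert-series computation showing that $\dim_K S/(I,f)=d\cdot 2^c$ for a regular $f\in S_d$ is correct, so under the paper's convention ($\deg=\dim_K$ for Artinian quotients) the hypothesis of~(2) would be triggered by every non-zero-divisor, making~(2) trivially false. The resolution you propose --- read $\deg$ in~(2) as the geometric degree of the intersection scheme, which is $0$ when $f$ is regular and agrees with the paper's convention whenever $(I\colon f)\neq I$ by Lemma~\ref{degree-initial-footprint} --- is exactly what makes the paper's equivalence rigorous. (One quibble of phrasing: ``leading coefficient of the Hilbert polynomial'' is not quite the right description for the degree, since the Hilbert polynomial of an Artinian module is the zero polynomial; it is cleaner to say the geometric degree is the Hilbert polynomial's constant value in dimension one and zero in dimension zero.) In short, your argument supplies a missing justification for the paper's central ``(2) is equivalent to\dots'' step rather than diverging from its approach.
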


\begin{proof}
Let $I=I(\mathbb{X})$ be a complete intersection ideal of $c$ quadratic homogeneous polynomials. Then  $\deg(S/I)=2^c$ and (2) is equivalent to the statement for any $f\in \mathcal{F}_d$, $\deg(S/(I(\mathbb{X}),f))\leq 2^c-2^{c-d}$. Using Definition \ref{def:GMD}, this is in turn equivalent to $\delta_I(d)\geq 2^{c-d}, \mbox{ for }d=1,\ldots,c$, which is precisely the statement of (1). 
\end{proof}

As an application of our earlier results we recover the following cases of
Conjecture~\ref{tohaneanu-eisenbud}(b) under the more general hypothesis that $I(\mathbb{X})$ is a not necessarily a radical complete intersection.

\begin{corollary}{\cite{Eisenbud-Green-Harris}}\label{conjecture-some-cases}
If $I$ is a complete intersection ideal generated by $c$ quadratic forms, then 
$\delta_I(d)\geq 2^{c-d}$, for $d=1, c-1,$ and $c$
\end{corollary}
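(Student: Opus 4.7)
My plan is to handle the three values $d\in\{1,c-1,c\}$ separately, each by a single invocation of a result already proved earlier in the paper, after a preliminary reduction to algebraically closed scalars. Throughout, I write $c$ for the number of quadratic generators, so $I$ has height $c$, $\deg(S/I)=2^c$, and $\mathrm{reg}(S/I)=\sum_{i=1}^c(2-1)=c$; since a complete intersection is Cohen--Macaulay, $S/I$ is Gorenstein.

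\textbf{Reduction to $K=\bar K$.} The first step is to base change to an algebraic closure $\bar K$ of $K$ and set $\bar S=S\otimes_K\bar K$, $\bar I=I\bar S$. A direct check using faithful flatness (as in the proof of Theorem~\ref{banff-july-22-29-2018}) shows that any $F\in\mathcal F_{d,r}(I)$ remains in $\mathcal F_{d,r}(\bar I)$ with $\deg(\bar S/(\bar I,F))=\deg(S/(I,F))$, while $\deg(S/I)=\deg(\bar S/\bar I)$; taking maxima over the (potentially larger) set $\mathcal F_{d,r}(\bar I)$ yields $\delta_I(d)\geq \delta_{\bar I}(d)$, so it suffices to bound $\delta_{\bar I}(d)$ from below. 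Over $\bar K$ the minimal primes of $\bar I$ (which coincide with the associated primes since $\bar I$ is Cohen--Macaulay, with no embedded components) are the defining ideals of the $\bar K$-rational points of the zero-dimensional subscheme $V(\bar I)\subset\mathbb P^{s-1}_{\bar K}$, and are therefore generated by linear forms. Thus $\bar I$ is a Geramita ideal which is also Gorenstein, with $\deg(\bar S/\bar I)=2^c$ and $\mathrm{reg}(\bar S/\bar I)=c$.

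\textbf{The three cases.} The case $d=c$ is immediate from Theorem~\ref{rth-footprint-lower-bound}(b) applied to the unmixed ideal $\bar I$, giving $\delta_{\bar I}(c)\geq 1=2^{c-c}$. For $d=1$ I would apply Proposition~\ref{stefan-adam-uniform} to $\bar I$ with $e=2$ and $\mathrm{ht}(\bar I)=c$; its complete-intersection clause gives $\delta_{\bar I}(1)\geq 2^c-2^{c-1}=2^{c-1}$. For $d=c-1$ (assuming $c\geq 2$, since for $c=1$ this case either does not occur or coincides with $d=c$), Corollary~\ref{geramita-gorenstein} applies to the Geramita Gorenstein ideal $\bar I$ with $r_0=\mathrm{reg}(\bar S/\bar I)=c\geq 2$, yielding $\delta_{\bar I}(c-1)=2=2^{c-(c-1)}$. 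Combining each case with the inequality $\delta_I(d)\geq \delta_{\bar I}(d)$ finishes the proof.

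\textbf{Main obstacle.} The only nontrivial point is securing the hypothesis ``associated primes generated by linear forms'' required by both Proposition~\ref{stefan-adam-uniform} and Corollary~\ref{geramita-gorenstein}. This is precisely what forces the base change to $\bar K$, and verifying that $\delta_I$ can only decrease under that extension is the sole technical step. Once the reduction is in place, each of the three cases collapses to a single citation of a previously established result, and no further computation is needed.
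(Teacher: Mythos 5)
Your proof is correct, and it actually settles a slightly more general statement than the paper does: the paper, read in context (the sentence introducing this corollary takes $I=I(\mathbb{X})$ for a fat point scheme $\mathbb{X}$), silently assumes the associated primes of $I$ are generated by linear forms, so Proposition~\ref{stefan-adam-uniform}, Corollary~\ref{geramita-gorenstein}, Proposition~\ref{monday-morning} and Theorem~\ref{banff-july-22-29-2018} are applied directly to $I$ without any base change. You instead take the literal hypothesis (any dimension-$1$ complete intersection of $c$ quadrics) at face value, observe that over a non-algebraically closed field the minimal primes need not be linear, and insert the reduction to $\bar K$. That reduction is sound: faithful flatness gives $\bar I\cap S=I$, $(\bar I:f)=(I:f)\bar S$, and preservation of Hilbert functions, hence $\mathcal F_{d,1}(I)\subset\mathcal F_{d,1}(\bar I)$ and $\deg(\bar S/(\bar I,f))=\deg(S/(I,f))$, so $\mathrm{hyp}_I(d)\leq\mathrm{hyp}_{\bar I}(d)$ and $\delta_I(d)\geq\delta_{\bar I}(d)$; over $\bar K$ the dimension-$1$ associated primes do become linear, so $\bar I$ is Geramita Gorenstein. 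The one genuine divergence in the cases themselves is $d=c$, where you invoke only $\delta_{\bar I}(c)\geq 1$ from Theorem~\ref{rth-footprint-lower-bound}(b), whereas the paper proves the sharper $\delta_I(c)=1$ via Proposition~\ref{monday-morning} and Theorem~\ref{banff-july-22-29-2018} (showing $\mathrm{reg}(\delta_I)={\rm v}(I)\leq\mathrm{reg}(S/I)=c$); your weaker bound still gives $\delta_I(c)\geq 2^{c-c}$, which is all the statement requires. In short: same three invocations for $d=1$ and $d=c-1$, a simpler invocation for $d=c$, plus a preliminary base-change step that closes a small reading gap the paper leaves implicit.
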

\begin{proof}
Case $d=1$ follows by taking $e=2$ in Proposition~\ref{stefan-adam-uniform}.  Case $d=c-1$ follows from Corollary~\ref{geramita-gorenstein} because complete intersections are Gorenstein and vanishing ideals of finite sets of points are Geramita.  For Case $d=c$ recall that By Proposition~\ref{monday-morning} and Theorem~\ref{banff-july-22-29-2018}, ${\rm reg}(S/I)$ is the regularity index of
$\delta_I$. Thus Case $d=c$ follows since ${\rm reg}(S/I)=c$.
\end{proof}

If $\mathbb{X}\subset \mathbb P^c$ is a reduced set of points such that $I(\mathbb{X})$ is a complete intersection ideal and the points of $\mathbb{X}$ are in linearly general position, i.e., any $c+1$ points of $\mathbb{X}$ span $\mathbb P^c$, then \cite[Theorem 1]{BaFo} adds more cases when {\cite[Conjecture~CB10]{Eisenbud-Green-Harris}}  holds. If we assume that the quadrics that cut out $\mathbb{X}$ are generic, then the assumptions of that result are satisfied, and we can conclude by Proposition \ref{prop:CB10} that if $1\leq d\leq c-1$, then $\delta_I(d)\geq c(c-1-d)+2$.  Suppose $d:=c-k$, for some $1\leq k\leq c-1$. Then this inequality becomes $\delta_I(c-k)\geq c(k-1)+2$.  We observe that this inequality is sufficiently strong to establish [Conjecture \ref{tohaneanu-eisenbud}(b)] {\em asymptotically} for $c$ sufficiently large:
\begin{itemize}
  \item $k=2$. If $c\geq 2$, then $\delta_I(c-2)\geq c+2\geq 2^2$.
  \item $k=3$. If $c\geq 3$, then $\delta_I(c-3)\geq 2c+2\geq 2^3$.
  \item $k=4$. If $c\geq 5$, then $\delta_I(c-4)\geq 3c+2\geq 2^4$.
  \item $k\geq 5$. If $c\geq (2^k-2)/(k-1)$, then $\delta_I(c-k)\geq 2^k$.
\end{itemize}

%
%\section{Examples}\label{examples-section}
%
%\begin{example} Let $K=\mathbb{F}_2$ be a field with two elements and
%let $\mathbb{X}$ be the set in $\mathbb{P}^2$ given by
%$\mathbb{X}=\{[e_1],\, [e_2],\, [e_3],\, [e_2+e_3]\}$, where $e_i$ is
%the $i$-th unit vector in $K^3$. There is no linear form $h$ in
%$K[t_1,t_2,t_3]$ such that $h(e_1)=0$, $h(e_2)\neq 0$, $h(e_3)\neq 0$,
%and $h(e_2+e_3)\neq 0$.
%\end{example}

\vspace{-0.5em}

\section*{Acknowledgements.} 
\vspace{-0.5em}
We thank the Banff International Research Station (BIRS) where this project was started during under the auspices of their focused research groups program. Computations with {\it Macaulay2} \cite{mac2} were crucial to verifying many of the examples given in this paper. The authors acknowledge the support of the following funding agencies: Cooper was partially funded by NSERC grant RGPIN-2018-05004, Seceleanu was partially supported by NSF grant DMS--1601024 and EPSCoR grant OIA--1557417.

%We thank the referees for a careful
%reading of the paper and for the improvements suggested.

\begin{appendix}
\label{Appendix}

\section{Procedures for {\it Macaulay\/}$2$}

\begin{procedure}\label{procedure-footprint-matrix} Computing the
footprint matrix with {\it Macaulay\/}$2$ \cite{mac2}. This procedure
corresponds to Example~\ref{seceleanu-example-ci}. It can be
applied to any vanishing ideal $I$ to obtain the entries of the
matrix $({\rm fp}_{I}(d,r))$ and is reasonably fast.
\begin{verbatim}
S=QQ[t1,t2,t3], I=ideal(t1^3,t2*t3)
M=coker gens gb I
regularity M, degree M, init=ideal(leadTerm gens gb I)
er=(x)-> if not quotient(init,x)==init then degree ideal(init,x) else 0
fpr=(d,r)->degree M - max apply(apply(apply(
subsets(flatten entries basis(d,M),r),toSequence),ideal),er)
hilbertFunction(1,M),fpr(1,1),fpr(1,2),fpr(1,3)
--gives the first row of the footprint matrix
\end{verbatim}
\end{procedure}

%\smallskip

\begin{procedure}\label{procedure-gmdf}
Computing the GMD function with {\it
Macaulay\/}$2$ \cite{mac2} over a finite field and computing an upper
bound over any field using products of linear forms. This procedure
corresponds to Example~\ref{seceleanu-example-determinantal}.
\begin{verbatim}
q=3,S=ZZ/3[t1,t2,t3,t4,t5,t6],I=ideal(t1*t6-t3*t4,t2*t6-t3*t5)
G=gb I, M=coker gens gb I
regularity M, degree M, init=ideal(leadTerm gens gb I)
genmd=(d,r)->degree M-max apply(apply(subsets(apply(apply(apply(
toList (set(0..q-1))^**(hilbertFunction(d,M))-
(set{0})^**(hilbertFunction(d,M)),toList),x->basis(d,M)*vector x),
z->ideal(flatten entries z)),r),ideal),
x-> if #set flatten entries mingens ideal(leadTerm gens x)==r
and not quotient(I,x)==I then degree(I+x) else 0)
hilbertFunction(1,M),fpr(1,1),fpr(1,2),fpr(1,3),fpr(1,4),fpr(1,5),fpr(1,6)
genmd(1,1), L={t1,t2,t3,t4,t5,t6}
linearforms=(d,r)->degree M - max
apply(apply(apply((subsets(apply(apply(
(subsets(L,d)),product),x-> x % G),r)),toList),ideal),
x-> if #set flatten entries mingens ideal(leadTerm gens x)==r
and not quotient(I,x)==I then degree(I+x) else 0)
linearforms(1,2),linearforms(1,3),linearforms(1,4),linearforms(1,5)
--gives upper bound for genmd
\end{verbatim}
\end{procedure}

%\smallskip

\begin{procedure}\label{sep5-18} Computing the
minimum socle degree and the v-number of an ideal $I$ with
{Macaulay\/}$2$ \cite{mac2}. This procedure
corresponds to Example~\ref{sep5-18-example}.
\begin{verbatim}
S=QQ[t1,t2,t3,t4]
p1=ideal(t2,t3,t4),p2=ideal(t1,t3,t4),p3=ideal(t1,t2,t4),p4=ideal(t1,t2,t3)
I=intersect(ideal(t2^10,t3^9,t4^4,t2*t3*t4^3),
ideal(t1^4,t3^4,t4^3,t1*t3*t4^2),ideal(t1^4,t2^5,t4^3),
ideal(t1^3,t2^5,t3^10))
h=ideal(t1+t2+t3+t4)--regular element on S/I
J=quotient(I+h,m), regularity coker gens gb I
soc=J/(I+h), degrees mingens soc
J1=quotient(I,p1), soc1=J1/I, degrees mingens soc1
\end{verbatim}
\end{procedure}

%\smallskip

\begin{procedure}\label{sep12-18} Computing 
the v-number of a vanishing ideal $I(\mathbb{X})$, the regularity
index of $\delta_\mathbb{X}$, and the minimum
distance $\delta_\mathbb{X}(d)$ of the Reed--Muller-type code $C_\mathbb{X}(d)$ with
{Macaulay\/}$2$ \cite{mac2}. This procedure
corresponds to Example~\ref{Hiram-first-counterxample}.
\begin{verbatim}
q=3, G=ZZ/q, S=G[t3,t2,t1,MonomialOrder=>Lex]
p1=ideal(t2,t1-t3),p2=ideal(t2,t3),p3=ideal(t2,2*t1-t3)
p4=ideal(t1-t2,t3),p5=ideal(t1-t3,t2-t3), p6=ideal(2*t1-t3,2*t2-t3)
p7=ideal(t1,t2),p8=ideal(t1,t3),p9=ideal(t1,t2-t3),p10=ideal(t1,2*t2-t3)
I=intersect(p1,p2,p3,p4,p5,p6,p7,p8,p9,p10)
M=coker gens gb I, regularity M, degree M
init=ideal(leadTerm gens gb I)
genmd=(d,r)->degree M-max apply(apply(subsets(apply(apply(apply(
toList (set(0..q-1))^**(hilbertFunction(d,M))-
(set{0})^**(hilbertFunction(d,M)),toList),x->basis(d,M)*vector x),
z->ideal(flatten entries z)),r),ideal),
x-> if #set flatten entries mingens ideal(leadTerm gens x)==r
and not quotient(I,x)==I then degree(I+x) else 0)
genmd(1,1), genmd(2,1)
J1=quotient(I,p1), soc1=J1/I, degrees mingens soc1--gives 4
J7=quotient(I,p7), soc7=J7/I, degrees mingens soc7--gives 3
\end{verbatim}
\end{procedure}

\end{appendix}

\bibliographystyle{plain}

\end{document}